\documentclass[11pt]{amsart}
\usepackage{amsmath,amssymb,amscd,mathrsfs,amscd}
\usepackage[curve,matrix,arrow]{xy}

\usepackage{hyperref}
\hypersetup{
  colorlinks=true, 
   linkcolor=red,  
}







\renewcommand{\theenumi}{\alph{enumi}}

\DeclareMathOperator{\Ext}{Ext}
 \DeclareMathOperator{\soc}{soc}
 \DeclareMathOperator{\head}{head}
 \newcommand{\socle}{\soc}
 
 \DeclareMathOperator{\car}{char}

\newcommand{\alt}{\alpha}

\newcommand{\Nr}[2]{\dim (\Bil{#2})^{#1}}

  \DeclareMathOperator{\Lie}{Lie}
  \DeclareMathOperator{\Id}{Id}
  
  \renewcommand{\H}{\mathrm{H}}

\DeclareMathOperator{\sign}{sign}
 
 \newcommand{\kx}{k^\times}
 
 \newcommand{\at}{\widetilde{\alpha}}   
 \newcommand{\hsr}{\alpha_{0}}         
 
 \newcommand{\qb}{\bar{q}}
  
 \newcommand{\br}{b_{(r)}}
 \newcommand{\bz}{b_{\Z}}
 \newcommand{\bzr}{(\bz)_{(r)}}
 \newcommand{\Vz}{V_\Z}

  \newcommand{\qr}{q_{(r)}}

 \newcommand{\onto}{\twoheadrightarrow}

 \newcommand{\Gm}{\mathbb{G}_m}
 
 \newcommand{\B}{\mathscr{B}}

 \newcommand{\g}{\mathfrak{g}}
 \newcommand{\gt}{\tilde{\g}}
 
 \newcommand{\la}{\lambda}
 \newcommand{\s}{\sigma}
 
 \newcommand{\iso}{\xrightarrow{\sim}}
 
 \newcommand{\hch}{h^{\vee}}
 
 \newcommand{\roots}{\Phi}
 
 \newcommand{\Gt}{\tilde{G}}
 
 \newcommand{\Vb}{\bar{V}}
  \newcommand{\fb}{\bar{f}}
  \renewcommand{\qb}{\bar{q}}

 \DeclareMathOperator{\Bil}{Bil}
  \DeclareMathOperator{\Quad}{Quad}
 
 \newcommand{\I}{\mathscr{I}}

 
 \newcommand{\qform}[1]{{\left\langle{#1}\right\rangle}}                   
\newcommand{\darkrad}{0.115}

 \DeclareMathOperator{\Span}{span}
  \DeclareMathOperator{\Hom}{Hom}

\DeclareMathOperator{\rrad}{rad}
\DeclareMathOperator{\im}{im}
 \renewcommand{\O}{\mathrm{O}}

\newcommand{\cprod}{{\mathinner{\mkern2mu\raise2.5pt\hbox{.}}}}

\newcommand{\N}{\mathbb{N}}

\newcommand{\bhat}{\hat{b}}

\renewcommand{\g}{\mathfrak{g}}

\newcommand{\FF}{\mathbb{F}}
\newcommand{\Z}{\mathbb{Z}}
\newcommand{\Q}{\mathbb{Q}}
\newcommand{\C}{\mathbb{C}}

\newcommand{\ot}{\otimes}

\DeclareMathOperator{\SL}{SL}
\DeclareMathOperator{\GL}{GL}
\DeclareMathOperator{\SO}{SO}
\DeclareMathOperator{\Sp}{Sp}
\DeclareMathOperator{\Spin}{Spin}






\newtheorem{theorem}[equation]{Theorem}

\newtheorem{lemma}[equation]{Lemma}
\newtheorem{lem}[equation]{Lemma}

\newtheorem{prop}[equation]{Proposition}

\newtheorem{cor}[equation]{Corollary}

\theoremstyle{definition}
\newtheorem{defn}[equation]{Definition}
\newtheorem{example}[equation]{Example}
\newtheorem{eg}[equation]{Example}
\newtheorem{remark}[equation]{Remark}

\numberwithin{equation}{subsection}


%

\begin{document}
 \title[Bilinear and quadratic forms on modules of reductive groups]{Bilinear and quadratic forms on rational modules of split reductive groups}
 
 \begin{abstract}
 The representation theory of semisimple algebraic groups over the complex numbers (equivalently, semisimple complex Lie algebras or Lie groups, or real compact Lie groups) and the question of whether a 
 given representation is symplectic or orthogonal has been solved over the complex numbers since at least the 1950s. Similar results for Weyl modules of split reductive groups over fields of characteristic different from 2 hold by 
 using similar proofs.  This paper considers analogues of these results for simple, induced and tilting modules of split reductive groups over fields of prime characteristic as well as a complete answer for Weyl modules over fields of characteristic~2. 
 \end{abstract}
 
 \author{Skip Garibaldi}
 \thanks{The first author was partially supported by NSF grant DMS-1201542 and the Charles T.~Winship Fund at Emory University.}
\address{Garibaldi: Institute for Pure and Applied Mathematics, UCLA, 460 Portola Plaza, Box 957121, Los Angeles, California 90095-7121, USA}
\email{skip@member.ams.org}
\urladdr{http://www.garibaldibros.com/}

\author{Daniel K. Nakano}
\thanks{The second author was partially supported by NSF grant DMS-1402271}
\address{Nakano: Department of Mathematics, University of Georgia,
Athens, Georgia~30602, USA}
\email{nakano@math.uga.edu}

\thanks{{\tt Version of \today.}}

\maketitle

\setcounter{tocdepth}{1}
\tableofcontents

\setlength{\unitlength}{.75cm}

\section{Introduction}

\subsection{} The representation theory of semisimple algebraic groups over the complex numbers --- equivalently, semisimple complex Lie algebras or Lie groups --- is well known: the set of isomorphism classes of irreducible representations of the simply connected cover of a group $G$ is in bijection with the cone of dominant weights of the root system of $G$.  

Classifying representations of $G$ over $\C$ is the same as classifying homomorphisms from $G$ into a group of type $A$.  One can equally well ask about homomorphisms into other groups, for which homomorphisms into groups of type $B$ or $D$ (orthogonal representations) and groups of type $C$ (symplectic representations) play a distinguished role.  The basics of this theory were laid out in \cite{Malcev}, were known to Dynkin \cite[as summarized in pp.~254, 255, item C]{Dynk:max}, and a complete solution is clearly described in \cite[\S3.2.4]{GW2} or \cite[\S{VIII.7.5}]{Bou:g7}.  One reduces the problem to studying irreducible representations, then giving an algorithm in terms of the dominant weight $\la$ for determining whether the irreducible representation $L(\la)$ has a nonzero $G$-invariant symmetric or skew-symmetric bilinear form.  The algorithm is proved via restricting the representation to a principal $A_1$ subgroup of $G$.  This material is now a plank in the foundations of representation theory, where it has applications to determining the subalgebras of semisimple complex Lie algebras \cite{Dynk:max} and distinguishing real and quaternionic representations of compact real Lie groups \cite[Ch.~IX, App.~II.2, Prop.~3]{Bou:g7}.  For general $k$, invariant bilinear forms are used to provide information about the groups, for example throughout the book \cite{KMRT}, or for bounding the essential dimension of $G$ as in \cite{ChSe} or \cite{BabicCh}, or for controlling Lie subalgebras as in \cite[\S5.8, Exercise 1]{StradeF}.

\subsection{} This paper concerns the generalization of the above theory of symplectic and orthogonal representations to the case of a field $k$ of prime characteristic.  The situation is more complicated. 
There are four classes of representations (each of which is in bijection with the cone of dominant weights) that have a claim to being the natural generalization of the irreducible representation over $\C$ with highest weight $\la$: the irreducible representation $L(\la)$, the induced module $H^0(\la)$, the Weyl module $V(\la)$, and the tilting module $T(\la)$.  The four representations are related by nonzero maps
\begin{equation} \label{triple}
\begin{CD}
V(\la) @>\text{surjective}>> L(\la) \\
@V{\text{injective}}VV @VV{\text{injective}}V \\
T(\la) @>\text{surjective}>> H^0(\la).
\end{CD}
\end{equation}
where all four maps 
are unique up to multiplication by an element of $k^\times$ (and the diagram commutes up to multiplication by a scalar).  The definitions make sense for every field $k$, and when $\car k = 0$ all the maps in \eqref{triple} are isomorphisms.  We refer to \cite{Jantzen} for information on this subject.  In extending the theory of orthogonal and symplectic representations to include the case where $\car k$ is prime, two complications arise.

First, although the theory of bilinear forms on irreducible representations over $\C$ translates easily to irreducible representations and Weyl modules over any $k$ (see Lemma \ref{bil}), it does not directly translate to the representations $H^0(\la)$ and $T(\la)$.  For example, when $V$ is irreducible or Weyl, the space of $G$-invariant bilinear forms is at most 1-dimensional, but for $V = H^0(\la)$ or $T(\la)$ it can be larger.  We give a formula for this dimension in the case of $T(\la)$ in Theorem \ref{tiltingforms} and relate it to $B$-cohomology in the case of $H^0(\la)$ in Theorem \ref{H0.bil}.

Second, when $\car k = 2$, the theory of $G$-invariant bilinear forms is insufficient to treat homomorphisms of $G$ into groups of type $B$ and $D$, because such groups are related to the existence of $G$-invariant \emph{quadratic} forms.  The question of the existence of $G$-invariant quadratic forms on irreducible representations has previously been studied (cf. \cite{Willems77}, \cite{GowWillems:methods}, \cite{SinWillems}), and there is no known, straightforward necessary and sufficient condition, cf.~Section \ref{irred.sec}.  In contrast to this, we completely solve the question of which Weyl modules have a $G$-invariant quadratic form, see Theorem \ref{orth}.

One major feature throughout our paper is the interplay with the admissibility of various $G$-invariant bilinear forms and the rational cohomology of $G$. In the final section (Section \ref{wedge2.coh}), as a byproduct of our results, we are able to calculate $\H^1(G, \Lambda^2(V))$ for many cases where $V$ is $L(\la)$, $T(\la)$, or $H^0(\la)$.

\subsection{} 
One motivation for the present paper stems from \emph{The Book of Involutions}, \cite{KMRT}, which includes ad hoc constructions of $G$-invariant quadratic forms on various irreducible representations for various semisimple groups $G$.  These irreducible representations happen to also be Weyl modules, so our Theorem \ref{orth} provides the theoretical context for the ad hoc constructions by giving a necessary and sufficient condition for the existence of such a form.  Proposition \ref{Sp.odd} gives a new, explicit example of a $G$-invariant quadratic form that might have been constructed in \S10.D of \cite{KMRT} but was not.

This paper treats the case where $G$ is split reductive.  A sequel work will extend these results to the case where $G$ need not be split.

\subsection{Acknowledgements} We thank George McNinch, Anne Qu\'eguiner-Mathieu, and Jean-Pierre Tignol for helpful discussions. 

\smallskip

\section{Background: Representations of split reductive groups} \label{back.sec}

\subsection{Notation} We will follow the notation from \cite{Jantzen} and \cite{Bou:g4}. Throughout we consider a field $k$ of characteristic $p \ge 0$ and an algebraic group $G$ over $k$ (i.e., a smooth affine group scheme of finite type over $k$) that is reductive and split.  If $k$ is separably closed, then every reductive algebraic $k$-group is split.  We fix in $G$ a pinning, which includes the following data:
\begin{itemize}
\item $T$: a $k$-split maximal torus in $G$. 
\item $\Phi$: the  root system of $G$ with respect to $T$. When referring to short and long roots, when a root system 
has roots of only one length, all roots shall be considered as both short and long.
\item $\Pi=\{\alpha_1,\dots,\alpha_n\}$: the set of simple roots. We adhere to the ordering of the simple roots as given in
\cite{Jantzen} (following Bourbaki); see \eqref{C.diag} for the numbering for type $C$. 
\item $\at$: the maximal root. 

\item  $B$: a Borel subgroup containing $T$ corresponding to the negative roots. 
\item  $W = N_G(T)/T$: the Weyl group.
\item  $w_{0}$: longest element in $W$, relative to the choice of simple roots $\Pi$.
\item $P:=\mathbb Z \omega_1\oplus\cdots\oplus{\mathbb Z}\omega_n$: the weight lattice, where the fundamental dominant weights $\omega_i$ are defined by $\langle\omega_i,\alpha_j \rangle=\delta_{ij}$, $1\leq i,j\leq n$.
\item $X(T) = \Hom(T, \Gm) \subseteq P$.
\item $\leq$ on $P$: a partial ordering of weights, for $\lambda, \mu \in P$, $\mu\leq \lambda$ if and only if $\lambda-\mu$ is in $\N \alpha_1 + \cdots + \N \alpha_n$.
\item $P_+ :={\mathbb N}\omega_1+\cdots+{\mathbb N}\omega_n$: the dominant weights.
\item $X(T)_+ := X(T) \cap P_+$.
\item $Q:={\mathbb Z}\Phi$: the root lattice. 

\item $F:G\rightarrow G$: the Frobenius morphism. 
\item $G_r=\text{ker }F^{r}$: the $r$th Frobenius kernel of $G$. 
\item $M^{[r]}$:  the module obtained by composing the underlying representation for 
a rational $G$-module $M$ with $F^{r}$.

\item $H^0(\lambda) := \operatorname{ind}_B^G\lambda$, $\lambda\in X(T)_{+}$: the induced module whose character is provided by Weyl's character formula.  
\item $V(\lambda):=H^{0}(-w_{0}\lambda)^{*}$: the Weyl module. 
\item $L(\lambda)$: the irreducible finite dimensional $G$-module with highest weight $\lambda\in X(T)_{+}$. 
\item $T(\lambda)$: Ringel's indecomposable tilting module corresponding to $\lambda\in X(T)_{+}$, as defined in \cite[Prop.~E.6]{Jantzen}.
\end{itemize} 

Note that $V(\la)^* = H^0(-w_0 \la)$, whereas $L(\la)^* = L(-w_0 \la)$ and $T(\la)^* = T(-w_0 \la)$.

\subsection{Tori} In the case when $G$ is a torus, $\roots = \emptyset$, $Q = 0$, and $P = X(T) = X(T)_+$.  For each $\la \in X(T)_+$, $L(\la) = H^0(\la) = V(\la) = T(\la)$ is one-dimensional.

\subsection{} \label{red.sc} Suppose $G = T_0 \times \prod_{i=1}^n G_i$ for $T_0$ a split torus and $G_i$ simple and simply connected.  Let $T_i$ be a maximal split torus in $G_i$ and take $T = \prod_{i=0}^n T_i$; a pinning of $G$ relative to $T$ is equivalent to fixing a pinning of each $G_i$ relative to $T_i$.  Then $\roots$ is the union of the root systems of the $G_i$.  Setting $P_i$ to be the weight lattice of $G_i$, i.e., $X(T_i)$, we find that $X(T)_+ = P_0 \oplus \bigoplus_i P_i = P_+$. 

\begin{lem} \label{red.tensor}
With the notation of the previous paragraph, for $\la_i \in X(T_i)_+$, we have:
\[
L(\sum \la_i) = \ot L(\la_i), \quad V(\sum \la_i) = \ot V(\la_i), \quad \text{and} \quad H^0(\sum \la_i) = \ot H^0(\la_i).
\]
\end{lem}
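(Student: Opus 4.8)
The plan is to prove the three isomorphisms in the order $H^0$, $V$, $L$, since the last two follow formally once the first is in hand. Set $G_0 := T_0$ and let $B_i \subseteq G_i$ be the chosen Borel (with $B_0 = T_0$), so that $B = \prod_{i=0}^n B_i$ and $W = \prod_{i=1}^n W_i$; thus the longest element is $w_0 = (w_{0,i})_i$ acting componentwise, trivially on the torus factor. A dominant weight of $G$ is precisely a tuple $(\la_i)$ with $\la_i \in X(T_i)_+$, and $\sum \la_i$ is the associated character of $T = \prod_i T_i$. Throughout, $\ot_i M_i$ denotes the external tensor product, viewed as a $G$-module via the projections $G \onto G_i$; on highest weights it takes the sum.

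For the induced modules I would use the description $\operatorname{ind}_H^G M = (k[G] \ot M)^H$. Because $k[G] = \ot_i k[G_i]$ and the factor $B_i$ operates only on the $i$-th slot of $k[G]$ and on the $i$-th tensor factor of $\ot_i \la_i$, forming $\prod_i B_i$-invariants factors as the tensor product of the separate $B_i$-invariants. This yields
\[ H^0\!\Bigl(\textstyle\sum_i \la_i\Bigr) = \operatorname{ind}_B^G\!\Bigl(\textstyle\sum_i \la_i\Bigr) = \ot_i \operatorname{ind}_{B_i}^{G_i}(\la_i) = \ot_i H^0(\la_i), \]
with the torus factor contributing the one-dimensional module $\la_0$.

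The Weyl module then follows by dualizing. From $V(\la) = H^0(-w_0\la)^*$ and the componentwise action of $w_0$ one has $-w_0(\sum_i \la_i) = \sum_i(-w_{0,i}\la_i)$, so using the $H^0$ case together with $(\ot_i M_i)^* \cong \ot_i M_i^*$ for finite-dimensional modules,
\[ V\!\Bigl(\textstyle\sum_i \la_i\Bigr) = H^0\!\Bigl(\textstyle\sum_i(-w_{0,i}\la_i)\Bigr)^* = \Bigl(\ot_i H^0(-w_{0,i}\la_i)\Bigr)^* \cong \ot_i V(\la_i). \]

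For the irreducible case I would avoid having to prove directly that an external tensor product of irreducibles is irreducible, and instead realize $L(\sum_i \la_i)$ as the image of the canonical map $V(\sum_i \la_i) \to H^0(\sum_i \la_i)$, which is $L(\sum_i \la_i)$ by \eqref{triple}. Let $f_i \colon V(\la_i) \to H^0(\la_i)$ be the canonical map, with image $L(\la_i)$. Then $\ot_i f_i$ is a nonzero $G$-map $\ot_i V(\la_i) \to \ot_i H^0(\la_i)$, i.e.\ a nonzero map $V(\sum_i \la_i) \to H^0(\sum_i \la_i)$, and its image is $\ot_i \operatorname{im}(f_i) = \ot_i L(\la_i)$. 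Since $\dim_k \Hom_G(V(\mu), H^0(\mu)) = 1$ for every dominant $\mu$, the map $\ot_i f_i$ is a scalar multiple of the canonical map, so $L(\sum_i \la_i) = \ot_i L(\la_i)$; in particular this also shows $\ot_i L(\la_i)$ is irreducible. The one genuinely substantive step is the compatibility of $\operatorname{ind}$ with direct products used for $H^0$; everything else is formal, and the chief thing to get right is that the $B_i$-invariants really do separate across the tensor factors.
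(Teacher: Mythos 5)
Your proposal is correct, and it diverges from the paper in an interesting way on the irreducible case. For $H^0$ the paper simply cites \cite[Lemma I.3.8]{Jantzen}, which is exactly the fact you reprove via $\operatorname{ind}_B^G\la = (k[G]\ot\la)^B$, $k[G]=\ot_i k[G_i]$, and the factorization of $\prod_i B_i$-invariants across tensor slots; for $V$ both you and the paper dualize, using that $-w_0$ acts componentwise. The genuine difference is in the $L$ step: the paper invokes the structural fact that an irreducible representation of $T_0\times\prod G_i$ is an (external) tensor product of irreducibles of the factors and then matches highest weights, whereas you realize $L(\sum\la_i)$ as the image of the canonical map $V(\sum\la_i)\to H^0(\sum\la_i)$ from \eqref{triple}, observe that $\ot_i f_i$ is a nonzero element of $\Hom_G(V(\sum\la_i), H^0(\sum\la_i))$ with image $\ot_i \im(f_i)=\ot_i L(\la_i)$, and conclude by the one-dimensionality of that $\Hom$ space (a fact the paper itself uses in Proof \#1 of Lemma \ref{bil}). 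Your route buys two things: it sidesteps the irreducibles-of-a-product theorem entirely --- a fact that over a non-algebraically-closed field requires knowing the $L(\la_i)$ are absolutely irreducible, a point the paper leaves tacit --- and it yields the irreducibility of $\ot_i L(\la_i)$ as a \emph{corollary} rather than taking it as input. The paper's version is shorter given the standard toolkit; yours is more self-contained and arguably more robust over general base fields. All the individual steps you use ($\im(f_1\ot f_2)=\im f_1\ot\im f_2$ over a field, nonvanishing of a tensor product of nonzero maps, $\dim\Hom_G(V(\mu),H^0(\mu))=1$ for dominant $\mu$) are sound.
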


\begin{proof}
The claim for $H^0$ is \cite[Lemma I.3.8]{Jantzen}.  Dualizing gives the claim for $V$.  For $L$, an irreducible representation of $G$ is a tensor product of irreducible representations of $T_0$ and of the $G_i$, and inspecting highest weights yields the claim.
\end{proof}

\subsection{} \label{red.gen} For an arbitrary split reductive group $G$, there exists a split torus $T_0$ and split, simple, simply connected groups $G_1, \ldots, G_n$ as above and a central isogeny $\pi \!: T_0 \times \prod G_i \to G$, all of which are in some sense unique.  The quotient $\pi$ relates the chosen pinning of $G$ relative to $T$ to a pinning of $T_0 \times \prod G_i$ relative to the split maximal torus $\pi^{-1}(T)^\circ$ such that $\pi^* X(T)_+ = \prod X(T_i)_+$, and representations of $G$ induce representations of $T_0 \times \prod G_i$.  In this way, when proving the results in this paper, it is harmless to assume that $G$ is as in Section \ref{red.sc}.

In later parts of the paper, $G_1$ will be used to denote the first Frobenius kernel of $G$; the difference will be clear from context.

\section{Symmetric Tensors and Symmetric Powers} \label{symvsym.sec}

\subsection{ } \label{symsubsec} Let $V$ be a $k$-vector space.  

\begin{defn} \label{symp.def}
The symmetric group $\Sigma_n$ on $n$ letters acts on $\ot^n V$ by permuting the indices of a tensor $v_1 \ot \cdots \ot v_n$.  Define $S'_n(V) \subseteq \ot^n V$ via
\[
S'_n(V) = \{ x \in \ot^n V \mid \text{$\s x = x$ for all $\s \in \Sigma_n$} \};
\]
it is the space of \emph{symmetric tensors}.
The symmetrization map $s \!: \ot^n V \to \ot^n V$ is defined by $s(x) = \sum_{\s \in \Sigma_n} \s x$ and
\[
S''_n(V) := \im s \subseteq S'_n(V);
\]
elements of $S''_n(V)$ are \emph{symmetrized tensors}.
Evidently, if $n!$ is not zero in $k$, then $S''_n(V) = S'_n(V)$.
\end{defn}

\begin{example}[Bilinear forms] \label{bil.eg}
The space 
\[
\Bil V := V^* \ot V^*
\]
is the vector space of bilinear forms on $V$, and $S'_2(V^*)$ is the subspace of symmetric bilinear forms.  If $\car k = 2$, then $S''_2(V^*)$ is the span of elements $x \ot y - y \ot x$ for $x, y \in V^*$, i.e., $S''_2(V^*)$ is the space of alternating bilinear forms.

Regardless of the characteristic of $k$, we identify $\ot^2 V^* \iso \Hom(V, V^*)$ denoted $b \mapsto \bhat$ where $\bhat$ is defined by $\bhat_v := b(v, -)$.  Note that, as our vector spaces are finite-dimensional, the canonical inclusion $\ot^n (V^*) \subseteq (\ot^n V)^*$ is an equality by dimension count, so we omit the unnecessary parentheses from this expression.  The (left) radical of $b$ is 
$\rrad b := \ker \bhat$.
\end{example} 

\subsection{Symmetric powers} \label{sympowers}
The \emph{$n$-th symmetric power} $S^n(V)$ of $V$ is the image of $\ot^n V$ in the quotient of $\oplus_{i \ge 0} \ot^i V$ by the 2-sided ideal generated by all elements of the form $v \ot v' - v' \ot v$ for $v, v' \in V$; we write $\rho \!: \ot^n V \onto S^n(V)$ for the quotient map.  One typically omits the symbol $\ot$ when writing elements of $S^n(V)$.  If we fix a basis $v_1, \ldots, v_d$ of $V$, then $S^n(V)$ has basis $\{ v_1^{e_1} v_2^{e_2} \cdots v_d^{e_d} \mid \sum e_i = n \}$.

We obtain a commutative diagram (cf.~\cite[\S{IV.5.8}]{Bou:alg2}) of linear maps
\[
\xymatrix{
\ot^n V \ar[d]^\rho \ar[dr]^s & && & \ot^n V\ar[d]^\rho \\
S^n(V) \ar[r]^\varphi &S''_n(V)  &  \subseteq &S'_n(V) \ar[ur] \ar[r]^\psi & S^n(V)
}
\]
The map $\varphi$ is the multilinearization map $\varphi(v_1 \cdots v_n) = \sum_{\s \in \Sigma_n} v_{\s(1)} \ot \cdots \ot v_{\s(n)}$.  The compositions $\varphi \psi \!: S''_n(V) \to S''_n(V)$ and $\psi \varphi \!: S^n(V) \to S^n(V)$ are multiplication by $n!$.

\begin{example}[Quadratic forms]
The elements of the vector space 
\[
\Quad V  := S^2(V^*)
\]
are the  quadratic forms on $V$.  The multilinearization map $\varphi$ sends a quadratic form $q$ to its \emph{polar bilinear form} $b_q := \phi(q)$ given by
\[
b_q(v,v') = q(v + v') - q(v) - q(v').
\]
If $b_q = 0$, then $q$ is said to be \emph{totally singular}.  The \emph{radical} of $q$ is 
\[
\rrad q := \{ v \in V \mid \text{$v \in \rrad b_q$ and $q(v) = 0$} \}.
\]
Evidently $\rrad q$ is a vector space, and for each extension $K$ of $k$, $\rrad (q \ot K) \supseteq (\rrad q) \ot K$.  If $\car k \ne 2$, then $\rrad q = \rrad b_q$ and the only totally singular quadratic form is the zero form.  

We refer to \cite{EKM} for background concerning quadratic forms over a field, including the case $\car k = 2$.
\end{example}

\subsection{Alternating tensors} \label{alt}

\begin{defn} \label{altp.def} We define subspaces $A''_n(V) \subseteq A'_n(V) \subseteq \ot^n V$ via setting
\[
A'_n(V) := \{ x \in \ot^n V \mid \text{$\s x = (\sign \s) x$ for all $\s \in \Sigma_n$} \}
\]
and $A''_n(V)$ to be the image of the \emph{skew-symmetrization map} $\alt \!: \ot^n V \to A'_n(V)$ defined via $\alt(x) = \sum_{\s \in \Sigma_n} (\sign \s) \cdot \s x$.
These subspaces are analogous to the symmetric tensors in Definition \ref{symp.def}.

Evidently, the restriction of $\alt$ to $A'_n(V)$ acts as multiplication by $n!$.   We may identify $\Lambda^2(V)$ with $A''_2(V)$ and $\Lambda^2(V^*)$ with the space of alternating bilinear forms on $V$ (as $\Lambda^2 (V^*) = \Lambda^2(V)^*$), i.e., the forms $b \in V^* \ot V^*$ such that $b(v,v) = 0$ for all $v \in V$.  If $\car k = 2$, $A'_2(V^*) = S'_2(V^*)$ is the space of symmetric bilinear forms on $V$ and $A''_2(V^*)$ is a proper subspace of $A'_2(V^*)$ for nonzero $V$.
\end{defn}

The sequences
\begin{equation} \label{bil.seq}
\begin{CD}
0 @>>> S'_2(V) @>>> V \ot V @>\alt>> \Lambda^2(V) @>>> 0
\end{CD}
\end{equation}
and
\begin{equation} \label{wedgeS.seq}
\begin{CD}
0 @>>> \Lambda^2(V) @>>> V \ot V @>\rho>> S^2(V) @>>> 0
\end{CD}
\end{equation} 
are exact.
If $\car k \ne 2$, then both sequences split and encode the direct sum decomposition: $V \ot V \cong \Lambda^2(V) \oplus S^2(V)$.

\subsection{Representations}
If $V$ is a representation of a group $G$, then so are the vector spaces deduced from $V$ such as $S''_2(V^*)$.  

If $\car k \ne 2$, then replacing $V$ with $V^*$ in \eqref{wedgeS.seq} gives a direct sum decomposition $(\Bil V)^G \cong (\Lambda^2 V^*)^G \oplus (\Quad V)^G$. Furthermore, if $(\Bil V)^G = kb$ for some nonzero $b$, then $b$ is either symmetric or alternating.  

However, when $\car k = 2$ the situation is slightly different.

\begin{lemma}  \label{char2.sym}
Suppose $\car k = 2$ and $V$ is a representation of a group $G$ such that $(\Bil V)^G = kb$ for some nonzero $b$.  Then $b$ is symmetric and the quadratic form $\psi(b)$ is totally singular.
\end{lemma}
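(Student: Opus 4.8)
The plan is to first show that $b$ must be symmetric, using only that the invariant space $(\Bil V)^G$ is one-dimensional, and then to obtain total singularity of $\psi(b)$ from a short computation of its polar form.

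For the symmetry, I would bring in the transposition $\tau \in \Sigma_2$ that swaps the two tensor factors of $\Bil V = V^* \ot V^*$; at the level of forms it sends a bilinear form to its transpose, $\tau(b)(v,w) = b(w,v)$. Since $G$ acts diagonally on $V^* \ot V^*$, this swap is $k$-linear and commutes with the $G$-action, so $\tau$ preserves the subspace $(\Bil V)^G = kb$. As this subspace is one-dimensional, $\tau(b) = \la b$ for some $\la \in \kx$, and $\tau^2 = \Id$ forces $\la^2 = 1$. In characteristic $2$ the only square root of $1$ is $1$ itself, so $\la = 1$, i.e.\ $\tau(b) = b$; this is precisely the assertion that $b$ is symmetric, $b \in S'_2(V^*)$.

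For total singularity, I would first identify $\psi(b)$ explicitly. Recall from Section \ref{sympowers} that $\psi \!: S'_2(V^*) \to S^2(V^*) = \Quad V$ is the composite of the inclusion $S'_2(V^*) \subseteq \ot^2 V^*$ with the projection $\rho \!: \ot^2 V^* \onto S^2(V^*)$; writing $b$ in a dual basis shows that $\psi(b)$ is the quadratic form $q$ with $q(v) = b(v,v)$. A direct computation of the polar form then gives, for all $v, w \in V$,
\[
b_q(v,w) = q(v+w) - q(v) - q(w) = b(v,w) + b(w,v) = 2\,b(v,w) = 0,
\]
the last two steps using symmetry of $b$ and $\car k = 2$. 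Hence $b_q = 0$ and $q = \psi(b)$ is totally singular.

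None of the individual steps is hard once isolated, so the point deserving the most care is the first: one must verify that $\tau$ is genuinely $G$-equivariant and therefore stabilizes the invariant line, and then observe that the resulting scalar is a square root of unity, which the characteristic-$2$ hypothesis collapses to $1$. Given that, the total-singularity claim is essentially automatic, reflecting the general fact that in characteristic $2$ the image under $\psi$ of any symmetric bilinear form $b$ is the totally singular quadratic form $v \mapsto b(v,v)$.
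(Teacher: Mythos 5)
Your proof is correct, and while your symmetry argument coincides with the paper's in substance, your total-singularity argument takes a genuinely different and more elementary route. For symmetry, the paper applies $G$-invariants to the exact sequence \eqref{bil.seq}, notes that $\alt(b)$ must land back in the line $kb$, so $\alt(b) = mb$, and then kills $m$ by a Gram-matrix computation; since $\alt = \Id - \tau$ on $\ot^2 V^*$, this is the same mechanism as your eigenvalue argument ($\tau(b) = \la b$, $\tau^2 = \Id$ gives $\la^2 = 1$, and characteristic $2$ forces $\la = 1$), merely packaged differently. For total singularity, however, the paper leans a second time on the one-dimensionality of $(\Bil V)^G$: it supposes the polar form $\varphi\psi(b)$ is nonzero, deduces that this alternating $G$-invariant form equals $nb$ for some $n \in \kx$, hence that $b$ is alternating and $\psi(b) = 0$, a contradiction. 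You instead identify $\psi(b)$ explicitly with the quadratic form $v \mapsto b(v,v)$ and observe that its polar, $(v,w) \mapsto b(v,w) + b(w,v) = 2\,b(v,w)$, vanishes identically in characteristic $2$ once $b$ is symmetric. Your version is both shorter and strictly more general: it shows that $\psi$ of \emph{any} symmetric bilinear form in characteristic $2$ is totally singular, with no further use of $G$-invariance, whereas the paper's contradiction argument re-uses the hypothesis $(\Bil V)^G = kb$. Each step of your argument checks out: $\tau$ is $G$-equivariant because $G$ acts diagonally, so it preserves the invariant line, and the identification of $\psi(b)$ via the projection $\rho$ is exactly right.
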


\begin{proof}
The exact sequence \eqref{bil.seq} yields an exact sequence
\[
\begin{CD}
0 @>>> S'_2(V^*)^G @>>> (\Bil V)^G @>\alt>> \Lambda^2(V^*)^G,
\end{CD}
\]
and $\alt(b) = mb$ for some $m \in k$.  Writing down a Gram matrix $M$ for $b$ with respect to some basis of $V$, the equation $\alt(b) = mb$ says that $M - M^t = mM$, hence $m = 0$, i.e., $b$ is symmetric.  

For the second claim, suppose $\varphi \psi(b) \ne 0$.  Then the alternating form $\varphi \psi(b)$ equals $nb$ for some $n \in \kx$, thus $b$ is alternating and $\psi(b) = 0$. This is a contradiction, hence $\varphi \psi(b) = 0$.
\end{proof}

\subsection{Symmetric $p$-linear forms versus symmetric powers of degree $p$} \label{symvsym.2}

We now return to the notions introduced in this section in the special case where $k$ has characteristic $p \ne 0$, and 
compare the symmetrized $p$-linear tensors, $S''_p(V)$ and the symmetric powers of degree $p$, $S^p(V)$.  

Consider, for example, the case where $V$ has basis $v_1, v_2$ and $p = 3$.  Then  $S''_3(V)$ is 2-dimensional with basis $v_i \ot v_i \ot v_{3-i} + v_i \ot v_{3-i} \ot v_i + v_{3-i} \ot v_i \ot v_i = \varphi(v_i^2 v_{3-i})/2$ for $i = 1, 2$.  This contradicts the (erroneous) formula for $\dim S''_p(V)$ given in Exercise 5b from Chapter III, \S6 of \cite{Bou:alg1}; a correct formula is implicit in the following result.

\begin{prop} \label{symvsym.prop}
Suppose $\car k = p$.  Then the sequence
\begin{equation} \label{symvsym.seq}
\begin{CD}
0 @>>> V^{[1]} @>>> S^p(V) @>\varphi>> S''_p(V) @>>> 0
\end{CD}
\end{equation}
is exact, $V^{[1]} = \psi(S'_p(V))$, and $S'_p(V)$ is a direct sum of $S''_p(V)$ and the $k$-span of $\{ \ot^p v \mid v \in V \}$.
\end{prop}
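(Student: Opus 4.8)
The plan is to fix an ordered basis $v_1, \dots, v_d$ of $V$ and translate all three assertions into the combinatorics of multinomial coefficients in characteristic $p$; everything then falls out of a single computation together with the maps $\rho$, $\varphi$, $\psi$, $s$ of Section~\ref{sympowers}.

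First I would record two bases. The monomials $v^{\mathbf e} := v_1^{e_1} \cdots v_d^{e_d}$ with $|\mathbf e| = p$ form a basis of $S^p(V)$, while the orbit sums $O_{\mathbf e} := \sum v_{i_1} \ot \cdots \ot v_{i_p}$ (the sum over the $\Sigma_p$-orbit of tensors of type $\mathbf e$) form a basis of $S'_p(V) = (\ot^p V)^{\Sigma_p}$. The two identities driving everything are $\rho(O_{\mathbf e}) = \binom{p}{e_1, \dots, e_d}\, v^{\mathbf e}$ and $\varphi(v^{\mathbf e}) = (e_1! \cdots e_d!)\, O_{\mathbf e}$, and the key characteristic-$p$ fact is that, since $|\mathbf e| = p$ is prime, the two scalars are nonzero in $k$ in exactly complementary cases: the multinomial coefficient survives precisely when $\mathbf e$ is concentrated at one index (some $e_i = p$), whereas $e_1! \cdots e_d!$ survives precisely when $\mathbf e$ is not concentrated.

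From this, exactness follows at once. The map $\varphi$ is surjective because $s = \varphi \rho$ and $\rho$ is onto; and the displayed formula shows $\varphi$ kills exactly the concentrated monomials $v_1^p, \dots, v_d^p$ and sends every other monomial to a nonzero multiple of a distinct orbit sum. Hence $\ker \varphi = \langle v_1^p, \dots, v_d^p \rangle$, which I identify with $V^{[1]}$ via the $p$-th power map $v \mapsto v^p$; this is additive and $p$-semilinear, so $k$-linear on the twist, because $(\sum \lambda_i v_i)^p = \sum \lambda_i^p v_i^p$ in $S^p(V)$ (the freshman's dream). For the claim $V^{[1]} = \psi(S'_p(V))$, the formula for $\rho(O_{\mathbf e})$ shows $\psi = \rho|_{S'_p(V)}$ annihilates every non-concentrated orbit sum and sends the single-element orbit sum $v_i^{\ot p}$ to $v_i^p$, so its image is again $\langle v_1^p, \dots, v_d^p\rangle = V^{[1]}$.

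For the splitting I would first observe $S''_p(V) = \ker \psi$: the inclusion $\subseteq$ is $\rho \circ s = 0$ (permuted tensors have equal image under $\rho$, and there are $p! \equiv 0$ of them), and equality follows from the dimension count already in hand, since $\dim S''_p(V) = \dim \im \varphi = \dim S^p(V) - d = \dim S'_p(V) - d = \dim \ker \psi$. It then remains to exhibit a complement to $\ker\psi$ inside $S'_p(V)$ on which $\psi$ is injective, and the $p$-th tensor powers provide one: $\psi(v^{\ot p}) = v^p$ realizes $\langle v_1^{\ot p}, \dots, v_d^{\ot p}\rangle \iso V^{[1]}$. The point that makes the powers $\{\ot^p v\}$ natural here is that $v \mapsto v^{\ot p}$ is additive and $p$-semilinear \emph{modulo} $S''_p(V)$: expanding $(v+w)^{\ot p}$ in $\ot^p V$, the part with $k$ copies of $v$ and $p-k$ of $w$ equals $\frac{1}{k!(p-k)!}$ times a symmetrization and so lies in $S''_p(V)$, since $0 < k < p$ makes $k!(p-k)!$ invertible. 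I expect this last point to be the one place needing care — namely keeping straight that the $p$-th power map is additive only after passing to the quotient by $S''_p(V)$, so that the honest complement is the span of the powers of a fixed basis rather than of all of $V$; the remainder is bookkeeping with the two factorial identities above.
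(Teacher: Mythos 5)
Your proof is correct and follows essentially the same route as the paper's: both fix a basis, compute $\varphi$ on monomials (picking up the stabilizer order $e_1!\cdots e_d!$) and $\psi$ on orbit sums (picking up the multinomial coefficient $p!/(e_1!\cdots e_d!)$), and exploit that for prime $p$ these scalars vanish mod $p$ in exactly complementary cases, so that $\ker\varphi$ is the span of the concentrated monomials $v_i^p$, identified with $V^{[1]}$ by the freshman's dream; your dimension count showing $S''_p(V)=\ker\psi$ just makes explicit what the paper dispatches with ``the final claim is clear.'' The caution you flag at the end is moreover genuinely needed: the span of \emph{all} powers $\ot^p v$ is not a complement to $S''_p(V)$ once $\dim V\ge 2$ --- for instance in characteristic $2$ one has $v_1^{\ot 2}+v_2^{\ot 2}+(v_1+v_2)^{\ot 2}=s(v_1\ot v_2)\in S''_2(V)$, and for odd $p$ similarly $\sum_{t\in\FF_p}(v_1+tv_2)^{\ot p}$ is a nonzero element of $S''_p(V)$ --- so the final clause of the proposition should be read, as your argument (and in fact the paper's own proof) actually establishes, with the complement taken to be the span of the $p$-th tensor powers of a fixed basis.
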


\begin{proof} For exactness, the only thing to check is exactness at the middle term $S^p(V)$.  
Fix a basis $v_1, \ldots, v_d$ of $V$ and form the corresponding basis $\B= \{ v_1^{e_1} v_2^{e_2} \cdots v_d^{e_d} \mid \sum e_i = p \}$ of $S^p(V)$. We 
partition $\B$ as $X = \{ v_i^p \mid 1 \le i \le d \}$ and $Y = \B \setminus X$.  We may identify $V^{[1]}$ with the $k$-span of $X$, because, for any $v = \sum c_i v_i \in V$, we have $v^p = \sum c_i^p v_i^p$.

For an element $v_1^{e_1} v_2^{e_2} \cdots v_d^{e_d}$, we define
\[
h := \left( \ot^{e_1} v_1 \right) \ot \left( \ot^{e_2} v_2 \right) \ot \cdots \ot \left( \ot^{e_d} v_d \right) \quad \in \ot^p V
\]
and $H$ to be the stabilizer in $\Sigma_p$ of $h$.  Clearly,
\[
\varphi(v_1^{e_1} v_2^{e_2} \cdots v_d^{e_d}) = |H| \cdot \sum_{\sigma \in \Sigma_p/H} \s h.
\]
Therefore, for $v_i^p \in X$, we have $H = \Sigma_p$, so $\varphi(v_i^p) = p! \cdot h = 0$.  For $h = v_1^{e_1} \cdots v_d^{e_d} \in Y$, the size of the coset space $\Sigma_p / H$ is $p! / (e_1! e_2! \cdots e_d!)$, which is divisible by $p$ because no $e_i$ is.  Hence, $p$ does not divide $|H|$ and $\varphi(h)$ is in $k^\times \cdot \sum_{\sigma \in \Sigma_p/H} \s h$.  For distinct cosets $\s H$ and $\s' H$ of $H$, the elements $\s H \cdot h$ and  $\s' H \cdot h$ are linearly independent in $\ot^p V$, so it follows that $\ker \varphi = \Span(X) = V^{[1]}$.  

From this argument, the final claim is clear.
\end{proof}

Replacing $V$ with $V^*$, we may view sequence \eqref{symvsym.seq} as relating homogeneous polynomials of degree $p$ and symmetric $p$-linear forms.

\begin{defn}
For $k$ a field of characteristic $p$, a symmetric $p$-linear form $f$ is \emph{characteristic} if $f(v, v, \ldots, v) = 0$ for all $v \in V$.  For $p = 2$, a characteristic symmetric  2-linear form is nothing but an alternating bilinear form.
\end{defn}

\begin{cor} \label{symvsym.char}
For every vector space $V$ over a field of characteristic $p$, $S''_p(V^*)$ is the space of symmetric $p$-linear forms on $V$ that are characteristic.
\end{cor}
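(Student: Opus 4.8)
The plan is to apply Proposition~\ref{symvsym.prop} with $V$ replaced by $V^*$ and to study the map $\psi\colon S'_p(V^*) \to S^p(V^*)$, which by the commutative diagram of Section~\ref{sympowers} is simply the restriction of the quotient map $\rho$ to the space of symmetric $p$-linear forms. First I would make $\psi$ explicit. Fix a basis $v_1,\dots,v_d$ of $V$ with dual basis $x_1,\dots,x_d$ of $V^*$, and write a symmetric $p$-linear form as $f = \sum_{\mathbf i} c_{\mathbf i}\, x_{i_1}\ot\cdots\ot x_{i_p}$. Under the identification of $S^p(V^*)$ with homogeneous degree-$p$ forms, the element $\rho(f) = \sum_{\mathbf i} c_{\mathbf i}\, x_{i_1}\cdots x_{i_p}$ represents the function $v \mapsto f(v,\dots,v)$, the restriction of $f$ to the diagonal. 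By Proposition~\ref{symvsym.prop} applied to $V^*$, this image lies in $(V^*)^{[1]} = \Span\{x_i^p\}$, so $\psi(f) = \sum_i a_i x_i^p$ for scalars $a_i$; evaluating the associated function at $v_j$ yields $a_j = f(v_j,\dots,v_j)$. Thus
\[
\psi(f) = \sum_{i=1}^d f(v_i,\dots,v_i)\, x_i^p.
\]

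This formula gives the key equivalence: $f$ is characteristic, i.e.\ $f(v,\dots,v) = 0$ for every $v \in V$, if and only if $\psi(f) = 0$. If $f$ is characteristic then each coefficient $a_j = f(v_j,\dots,v_j)$ is zero, whence $\psi(f) = 0$; conversely, if $\psi(f) = 0$ then $\rho(f) = 0$, and since $\rho(f)$ represents the diagonal function $v \mapsto f(v,\dots,v)$, that function vanishes identically. This is the only place where I expect real care to be needed: over a small field a nonzero element of $S^p(V^*)$ can define the zero polynomial function, but here $\im\psi \subseteq (V^*)^{[1]}$ consists of combinations of the $p$-th powers $x_i^p$, whose coefficients are recovered by evaluation at the $v_j$, so formal vanishing and functional vanishing coincide on $\im\psi$.

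It remains to identify $\ker(\psi|_{S'_p(V^*)})$ with $S''_p(V^*)$. Since $S''_p(V^*) = \im\varphi$ and the composite $\psi\varphi$ is multiplication by $p! = 0$, every symmetrized tensor is annihilated by $\psi$, giving the inclusion $S''_p(V^*) \subseteq \ker(\psi|_{S'_p(V^*)})$. For the reverse inclusion I would compare dimensions: $\im(\psi|_{S'_p(V^*)}) = (V^*)^{[1]}$ has dimension $\dim V$, so $\dim\ker(\psi|_{S'_p(V^*)}) = \dim S'_p(V^*) - \dim V$, while the exact sequence~\eqref{symvsym.seq} gives $\dim S''_p(V^*) = \dim S^p(V^*) - \dim V$. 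As $\dim S'_p(V^*) = \dim S^p(V^*)$ (both equal the number of degree-$p$ monomials in $d$ variables, the symmetric tensors having a basis of orbit sums), the two dimensions agree and the inclusion is an equality. Combining the two paragraphs, $f \in S''_p(V^*)$ iff $\psi(f) = 0$ iff $f$ is characteristic, as claimed. I would deliberately run this dimension count instead of invoking the direct-sum decomposition in Proposition~\ref{symvsym.prop}, because that decomposition degenerates at $p = 2$, where $\Span\{\ot^p v\}$ already contains $S''_p$, yet the corollary---which in that case identifies the characteristic forms with the alternating bilinear forms---must still hold.
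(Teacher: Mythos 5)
Your proof is correct, and it takes a genuinely different route from the paper's. The paper's argument is a two-line coefficient extraction: if $f(v_1,\dots,v_1)\neq 0$ for some $v_1$, extend $v_1$ to a basis and observe that the orbit-sum expansion of $f$ contains a nonzero multiple of $\ot^p x_1$; this is then read against the \emph{final} claim of Proposition~\ref{symvsym.prop}, namely the (basis-dependent) decomposition $S'_p(V^*) = S''_p(V^*) \oplus \Span\{\ot^p x_i\}$. You instead lean only on the other two claims of the proposition --- exactness of \eqref{symvsym.seq} and $\psi(S'_p(V^*)) = (V^*)^{[1]}$ --- make $\psi$ explicit as $\psi(f) = \sum_i f(v_i,\dots,v_i)\,x_i^p$, and pin down $\ker(\psi|_{S'_p(V^*)}) = S''_p(V^*)$ by a dimension count; your handling of the small-field subtlety (formal and functional vanishing coincide on $\im\psi$ because the coefficients of $\sum a_i x_i^p$ are recovered by evaluation at the $v_j$) is exactly the care required, and the count is legitimate since the paper's spaces are finite-dimensional (its own proof also fixes a finite basis). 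What each buys: the paper's proof is shorter once the decomposition is granted; yours sidesteps that decomposition entirely, and your instinct to do so is sound, though your diagnosis needs adjusting. If one reads ``the $k$-span of $\{\ot^p v \mid v \in V\}$'' literally, the sum fails to be direct for \emph{every} $p$ once $\dim V \geq 2$, not only for $p=2$: the element $\ot^p(v_1+v_2) - \ot^p v_1 - \ot^p v_2$ is a nonzero sum of mixed orbit sums, each of which lies in $\im s = S''_p(V)$ because its stabilizer order $\prod_i e_i!$ is prime to $p$. Under the reading consistent with the proposition's proof --- the span of $\ot^p v_i$ over a \emph{fixed basis} --- the decomposition is a genuine direct sum for all $p$, including $p=2$, where $S'_2(V)$ has basis $\{v_i \ot v_i\} \cup \{v_i \ot v_j + v_j \ot v_i \mid i<j\}$ and $S''_2(V)$ is spanned by the latter set; so the degeneration you feared is an artifact of the ambiguous phrasing, not of $p=2$. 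Finally, your dimension count, while valid, could be bypassed: the coset computation in the proposition's proof already shows that $\psi$ kills every mixed orbit sum (the index $[\Sigma_p : H]$ is divisible by $p$) while sending $\ot^p x_i \mapsto x_i^p$, and the mixed orbit sums span $S''_p(V^*)$, which yields $\ker(\psi|_{S'_p(V^*)}) = S''_p(V^*)$ directly.
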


\begin{proof}
For $f \in S'_p(V^*)$, $f$ is not characteristic iff $f(v_1,v_1,\ldots, v_1) \ne 0$ for some $v_1 \in V$ iff with respect to some basis $v_1, \ldots, v_n$ of $V$ with dual basis $x_1, \ldots, x_n$, when we write $f$ in terms of the dual basis, we find a nonzero multiple of $\ot^p x_1$.  Proposition \ref{symvsym.prop} gives the claim.
\end{proof}

For a different view on Proposition \ref{symvsym.prop} and Corollary \ref{symvsym.char}, see \cite[Section 3, esp.~Theorem~3.4]{DrapalV}.

\begin{cor} \label{symvsym}
Suppose $\car k = p$ and $V$ is a representation of an algebraic group $G$.
\begin{enumerate}
\item \label{symvsym.inj} If $V$ has no codimension-1 G-submodules, then $\varphi \!: S^p(V^*)^G \to S''_p(V^*)^G$ is injective.
\item \label{symvsym.surj} If $\operatorname{H}^1(G, V^{*[1]}) = 0$, then $\varphi \!: S^p(V^*)^G \to S''_p(V^*)^G$ is surjective.
\end{enumerate}
\end{cor}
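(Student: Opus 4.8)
The plan is to extract both claims directly from the exact sequence \eqref{symvsym.seq} of Proposition \ref{symvsym.prop} by applying the fixed-point functor $(-)^G$, which is left exact. Replacing $V$ with $V^*$ in \eqref{symvsym.seq} gives the short exact sequence of $G$-modules
\[
\begin{CD}
0 @>>> V^{*[1]} @>>> S^p(V^*) @>\varphi>> S''_p(V^*) @>>> 0.
\end{CD}
\]
Taking $G$-invariants yields a long exact sequence in rational cohomology
\[
\begin{CD}
0 @>>> (V^{*[1]})^G @>>> S^p(V^*)^G @>\varphi>> S''_p(V^*)^G @>>> \operatorname{H}^1(G, V^{*[1]}) @>>> \cdots,
\end{CD}
\]
and both parts should fall out of the behaviour of the two end terms.

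For part \eqref{symvsym.surj}, the surjectivity of $\varphi$ on invariants is immediate from the long exact sequence: the cokernel of $\varphi \!: S^p(V^*)^G \to S''_p(V^*)^G$ injects into $\operatorname{H}^1(G, V^{*[1]})$, so if the latter vanishes then $\varphi$ is surjective. This is essentially formal once the long exact sequence is in hand.

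For part \eqref{symvsym.inj}, injectivity of $\varphi$ on invariants is equivalent to $(V^{*[1]})^G = 0$, since $V^{*[1]} = \ker \varphi$ inside $S^p(V^*)$ and left exactness identifies $\ker(\varphi|_{S^p(V^*)^G})$ with $(V^{*[1]})^G$. The point I would make here is that $(V^{*[1]})^G = (V^{*})^G$ as vector spaces, because Frobenius twisting does not change the underlying set of invariant vectors (a vector is $G$-fixed for the action on $V^{*[1]}$ exactly when it is $G$-fixed on $V^*$, the twist only altering how the group scheme acts through $F$). Thus I must show that the hypothesis ``$V$ has no codimension-$1$ $G$-submodules'' forces $(V^*)^G = 0$. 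A nonzero $G$-invariant functional $\ell \in (V^*)^G$ is exactly a nonzero $G$-module map $V \to k$ (trivial module), whose kernel is a codimension-$1$ $G$-submodule of $V$; conversely any codimension-$1$ submodule gives such an $\ell$. Hence $(V^*)^G = 0$ if and only if $V$ has no codimension-$1$ submodule, giving the desired injectivity.

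The main obstacle, such as it is, lies in the identification $(V^{*[1]})^G = (V^*)^G$ in part \eqref{symvsym.inj}: one must be careful that the Frobenius twist is applied to the $G$-action and does not disturb the underlying vector space or its set of fixed points, so that the reduction to codimension-$1$ submodules is legitimate. Everything else is a routine application of the left-exactness of $(-)^G$ and the resulting cohomology sequence attached to \eqref{symvsym.seq}.
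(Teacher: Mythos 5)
Your proof is correct and follows essentially the same route as the paper: apply the fixed-point functor to the sequence \eqref{symvsym.seq} with $V$ replaced by $V^*$, read off surjectivity of $\varphi$ on invariants from the vanishing of $\H^1(G, V^{*[1]})$, and reduce injectivity to $(V^{*[1]})^G = 0$ via the absence of codimension-$1$ submodules (the paper states only the implication $(V^*)^G = 0 \Rightarrow (V^{*[1]})^G = 0$, which your Frobenius-twist observation makes explicit, using that $F$ is an fppf epimorphism). One inessential slip: your converse claim that every codimension-$1$ $G$-submodule yields a nonzero element of $(V^*)^G$ fails when the one-dimensional quotient carries a nontrivial character of $G$, but only the direction you actually use is needed, so the argument stands.
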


Part \eqref{symvsym.surj}, in the special case $p = 2$, can be found in \cite[Satz 2.5]{Willems77}.

\begin{proof} 
Taking the exact sequence  \eqref{symvsym.seq}, replacing $V$ with $V^*$, and taking fixed submodules  gives the exact sequence
\[
\begin{CD}
(V^{*[1]})^G @>>> S^p(V^*)^G @>\varphi>> S''_p(V^*)^G @>>> \H^1(G, V^{*[1]}).
\end{CD}
\]
From this, \eqref{symvsym.surj} is clear.  For \eqref{symvsym.inj}, if $(V^*)^G = 0$, then $(V^{*[1]})^G = 0$.
\end{proof}

\section{Bilinear Forms on Irreducible and Weyl Modules} \label{bil.weyl}

\subsection{Bilinear forms on irreducible modules} The proof in the case $k = \C$ given in \cite[Th.~3.2.13, 3.2.14]{GW2} shows that, for every field $k$ and every $\la \in X(T)_+$, \emph{$(\Bil L(\la))^G$ is nonzero if and only if $\la = -w_0 \la$, if and only if $(\Bil L(\la))^G = kb$ for some nondegenerate $b$.}

Suppose these conditions hold.  If $\car k \ne 2$, then the splitting of sequence \eqref{wedgeS.seq}, shows that $b$ is symmetric or skew-symmetric.  \emph{If $\car k = 2$ and $\la \ne 0$, then $b$ is alternating.}  Indeed, in that case $b$ is symmetric with $\psi(b)$ totally singular by Lemma \ref{char2.sym}, but $\psi(b)$ is the zero quadratic form by Corollary \ref{symvsym}\eqref{symvsym.inj}, i.e., $b$ is alternating, as claimed.

\subsection{Reducible modules}
The material in the preceding subsection is enough to determine $(\Bil V)^G$ when $V$ is semisimple.  We now consider arbitrary (finite-dimensional) representations $V$.  Recall that the \emph{socle}, denoted by $\soc V$, is the largest semisimple submodule of $V$.  
The \emph{head} of $V$, denoted by $\head V$, is the maximal semisimple quotient; the kernel of the map $V \to \head V$ is the \emph{radical} of $V$, denoted by $\rrad V$.
The following observation has many applications.

\begin{lem} \label{headvsocle}
Let $U \subseteq \rrad V$.  If $U$ and $(\head V)^*$ have no common composition factors, then the pullback $(\Bil V/U)^G \to (\Bil V)^G$ is an isomorphism.
\end{lem}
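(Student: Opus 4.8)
The plan is to split the claim into injectivity and surjectivity of the pullback, with all the content residing in surjectivity. Writing $\pi \colon V \onto V/U$ for the quotient map and noting that the pullback sends a form $\bar b$ on $V/U$ to $(v,w) \mapsto \bar b(\pi v, \pi w)$, injectivity is immediate: since $\pi$ is surjective, a form on $V/U$ pulling back to $0$ is already $0$, so the pullback is injective on all of $\Bil(V/U)$, a fortiori on $G$-invariants.

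For surjectivity I would fix $b \in (\Bil V)^G$ and show it is the pullback of a (necessarily $G$-invariant) form on $V/U$; this holds exactly when $b(u,w) = 0 = b(w,u)$ for all $u \in U$ and $w \in V$, i.e.\ when $U$ lies in both the left and right radical of $b$. Using the map $\bhat \colon V \to V^*$, $\bhat_v = b(v,-)$ from Example~\ref{bil.eg}, the $G$-invariance of $b$ says precisely that $\bhat$ is a $G$-module homomorphism, and the left-radical condition is $\bhat(U) = 0$. The key step is to prove $\bhat(U) = 0$ via composition factors: the module $\bhat(U)$ is at once a quotient of $U$ (so its composition factors lie among those of $U$) and a submodule of $V^*$ (so, were it nonzero, it would contain a nonzero simple submodule lying in $\soc(V^*) \cong (\head V)^*$). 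A nonzero $\bhat(U)$ would therefore furnish a simple module common to the composition factors of $U$ and of $(\head V)^*$, contradicting the hypothesis; hence $\bhat(U) = 0$.

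To obtain the right-radical condition I would run the same argument on the transposed form $b^t(v,w) := b(w,v)$, which is again $G$-invariant, concluding $\widehat{b^t}(U) = 0$, i.e.\ $b(w,u) = 0$ for all $w \in V$, $u \in U$. The two vanishing statements together say that $b$ factors through $(V/U) \times (V/U)$; the resulting $\bar b$ is $G$-invariant because $\pi$ is $G$-equivariant, and it pulls back to $b$, so $b$ lies in the image.

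The main obstacle is the socle/composition-factor bookkeeping in the second paragraph: one must cleanly combine the duality $\soc(V^*) \cong (\head V)^*$ (so that simple submodules of $V^*$ are constituents of $(\head V)^*$) with the fact that the composition factors of the quotient $\bhat(U)$ form a sub-multiset of those of $U$. Everything surrounding this is formal.
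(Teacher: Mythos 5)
Your proof is correct, and its engine is the same as the paper's: both reduce the question to the $G$-module homomorphism $\bhat \colon V \to V^*$ attached to an invariant form and exploit the duality $\soc(V^*) = (\head V)^*$ together with the no-common-composition-factor hypothesis. The structural differences are real, though, and worth recording. The paper first treats the case of simple $U$ (where $\bhat(U)$, being $0$ or simple, lands directly in $\soc(V^*)$) and then inducts on a composition series of $U$, using $U \subseteq \rrad V$ to ensure $\rrad(V)/U_0 = \rrad(V/U_0)$ and $\head(V/U_0) = \head(V)$ at each step. You avoid the induction entirely: a nonzero $\bhat(U)$, as a submodule of $V^*$, contains a simple submodule lying in $\soc(V^*) = (\head V)^*$, while as a quotient of $U$ its composition factors are among those of $U$ --- this handles arbitrary $U$ in one stroke, and in fact never invokes the hypothesis $U \subseteq \rrad V$, so your argument is marginally more general than the stated lemma requires. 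You also dispatch the right radical differently: the paper argues that the induced map $\head V \to \head(V^*) \to U^*$ must vanish, whereas you rerun the left-radical argument on the transposed form $b^t(v,w) = b(w,v)$, which is cleaner and makes the two radical conditions symmetric. Both routes then conclude identically: $b$ vanishes whenever either slot holds an element of $U$, hence descends to a $G$-invariant form on $V/U$ pulling back to $b$, and injectivity of the pullback is immediate from surjectivity of $V \onto V/U$.
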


\begin{proof}
Suppose first that $U$ is simple.  The induced map $\Bil(V/U) \to \Bil V$ is obviously injective and $G$-equivariant.  For every $\bhat \in \Hom_G(V, V^*)$, we have
\[
\bhat(U) \subseteq \socle(V^*) = (\head V)^*,
\]
so by hypothesis $\bhat$ vanishes on $U$.  Furthermore, $\bhat$ induces a homomorphism $\head V \to \head (V^*) \to U^*$, which must also vanish by the hypothesis, hence every element of $\bhat(V)$ vanishes on $U$, and $\bhat$ is in the image of $\Hom_G(V/U, (V/U)^*) \to \Hom_G(V, V^*)$, as claimed.

The general case follows by induction on the length of a composition series for $U$, because for simple $U_0 \subseteq \rrad V$ we have $\rrad(V)/U_0 = \rrad(V/U_0)$ and $\head(V/U_0) = \head(V)$.
\end{proof}

\subsection{Weyl modules} For each $\la \in X(T)_+$,  the Weyl module $V(\la)$ has head $L(\la)$ and therefore $\rrad V(\la)$ is the kernel of the map $V(\la) \to L(\la)$ from \eqref{triple}.  Note that $\rrad V(\la) = 0$ if and only if $V(\la)$ is irreducible, and this is true if and only if all maps in \eqref{triple} are isomorphisms.
If $\car k = 0$ or if $\la$ is minuscule, then these equivalent conditions hold.

\begin{example} \label{qf.eg}
Let $G$ be an adjoint group of type $B_n$ over a field $k$, hence it is $\SO(V,q)$ for a quadratic form $q$ on a $(2n+1)$-dimensional vector space $V$ by \cite[p.~364]{KMRT}. The $G$-module 
$V$ can be identified with the Weyl module $V(\omega_1)$.  If $\car k = 2$, the radical of the bilinear form $\rrad b_q$ (whose definition is recalled in Example \ref{bil.eg}) is 1-dimensional, the irreducible representation 
$L(\omega_1)$ is $V(\omega_1)/\rrad b_q$, and $H^0(\la) = V^*$ has a unique proper submodule, $L(\omega_1)$.
\end{example}


\begin{lem} \label{bil}
For $\la \in X(T)_+$, the surjection $V(\la) \to L(\la)$ induces an isomorphism $(\Bil L(\la))^G \iso (\Bil V(\la))^G$.
\end{lem}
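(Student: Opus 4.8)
The plan is to apply Lemma~\ref{headvsocle} with $V = V(\la)$ and $U = \rrad V(\la)$. Since $V(\la)$ has head $L(\la)$, the quotient $V(\la)/U$ is exactly $L(\la)$, and the induced pullback $(\Bil L(\la))^G \to (\Bil V(\la))^G$ is precisely the map coming from the surjection $V(\la) \to L(\la)$ in \eqref{triple}. So everything reduces to checking the hypothesis of Lemma~\ref{headvsocle}: that $\rrad V(\la)$ and $(\head V(\la))^* = L(\la)^* = L(-w_0\la)$ have no common composition factor.

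First I would pin down the composition factors. The weight $\la$ occurs with multiplicity one among the weights of $V(\la)$, and every composition factor $L(\mu)$ has $\mu \le \la$; hence $L(\la)$ appears exactly once (as the head) and every composition factor of $\rrad V(\la)$ has the form $L(\mu)$ with $\mu < \la$ strictly. On the other side, $(\head V(\la))^*$ is the single simple module $L(-w_0\la)$, with $-w_0\la$ dominant. Thus a common composition factor would force $-w_0\la < \la$.

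The crux, and the only nonformal point, is ruling out $-w_0\la < \la$. Here I would use that $-w_0$ is an order automorphism of $(P,\le)$: since $w_0\Pi = -\Pi$, the map $-w_0$ permutes $\Pi$ and therefore carries the cone $\N\alpha_1 + \cdots + \N\alpha_n$ onto itself, so $\mu \le \nu$ if and only if $-w_0\mu \le -w_0\nu$. If $-w_0\la < \la$, applying $-w_0$ gives $\la < -w_0\la$, contradicting antisymmetry. Hence $-w_0\la \not< \la$, the two modules share no composition factor, and Lemma~\ref{headvsocle} delivers the asserted isomorphism.

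I expect the order-automorphism observation for $-w_0$ to be the genuine content; everything else is bookkeeping within the highest-weight theory of Weyl modules. One cosmetic caveat worth noting is the self-dual case $-w_0\la = \la$, where $L(-w_0\la) = L(\la)$ is the head rather than a radical composition factor; the inequality argument covers this uniformly, since $\la \not< \la$ as well.
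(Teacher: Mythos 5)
Your proposal is correct and coincides with the paper's second proof: apply Lemma~\ref{headvsocle} with $U = \rrad V(\la)$ and rule out a common composition factor by showing $\la$ and $-w_0\la$ are incomparable. Your order-automorphism argument for $-w_0$ is just an inline restatement of the paper's Lemma~\ref{comparable} (both boil down to $w_0$ swapping positive and negative roots), so the two proofs are essentially identical; the paper also records a slicker alternative via $(\Bil V(\la))^G = \Hom_G(V(\la), H^0(-w_0\la))$, which you did not use.
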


\begin{proof}[Proof \#1]
$(\Bil V(\la))^G = \Hom_G(V(\la), H^0(-w_0\la))$, which is $k$ (iff $\la = -w_0 \la)$ or $0$.  Thus the induced map $(\Bil L(\la))^G \to (\Bil V(\la))^G$ is onto.
\end{proof}

\begin{proof}[Proof \#2]
Apply Lemma \ref{headvsocle} with $U = \rrad V(\la)$, using Lemma \ref{comparable} to see that $L(-w_0\la)$ is not a composition factor of $U$.
\end{proof}

%
%
%
\begin{defn}
We say a representation $V$ of $G$ is \emph{symplectic} if there is a nonzero $G$-invariant alternating bilinear form on $V$ (i.e., $\Lambda^2(V^*)^G \ne 0$) and $V$ is \emph{orthogonal} if $(\Quad V)^G \ne 0$.
Clearly, if $\car k \ne 2$, a representation $V(\la)$ or $L(\la)$ can be symplectic or orthogonal or neither, but not both.  If $\car k = 2$ and $\la \ne 0$, it can be symplectic, both orthogonal and symplectic, or neither. 
\end{defn}

\subsection{Integral models}  \label{integral}
As $G$ is a split reductive group, there is a split reductive group scheme $G_\Z$ over $\Z$ such that $G_\Z \times k$ is isomorphic to $G$ \cite{SGA3:new}.  Moreover, for each $\la \in X(T)_+$ there is a representation $V(\la, \Z)$ of $G_\Z$ such that base change identifies $V(\la, \Z) \times k$ with the Weyl module $V(\la)$ of $G$ over $k$.  Consequently, it makes sense to write $V(\la, K)$ for the Weyl module $V(\la, \Z) \times K$ of $G_\Z \times K$, for any field $K$. We use this convention when we want to emphasize the field of definition.

Suppose that $\la = -w_0 \la$, so $V(\la, \C)$ is orthogonal or symplectic; the recipe described in \cite{GW2} will tell which.  Because $V(\la, \Q)$ is also self-dual, there is a nonzero $(G_\Z \times \Q)$-invariant bilinear form on $V(\la, \Q)$, and by clearing denominators we may assume it is indivisible and defined on $V(\la, \Z)$ with values in $\Z$.  From this we find: when $\car k \ne 2$, $V(\la, k)$ is orthogonal (resp., symplectic) iff $V(\la, \C)$ is orthogonal (resp., symplectic).  

For any $k$, if  $V(\la, \C)$ is orthogonal, then we can similarly use the symmetric bilinear form on $V(\la, \Q)$ to construct a $(G_\Z \times \Q)$-invariant quadratic form that is nonzero and indivisible on $V(\la, \Z)$ and so conclude that $V(\la, k)$ is orthogonal.  The converse of this is false, see for example Proposition \ref{Sp.odd}.

Entirely parallel remarks hold for the induced module, $H^0(\la)$.



\subsection{Reduced Killing Form} \label{killing.2}
Let $G$ be a split \emph{quasi-simple} group defined over $\Z$.
the highest root $\at$ is in $X(T)_+$ and the Weyl module $V(\at, \Z)$ is the Lie algebra $\gt$ of
the simply connected cover $\Gt$ of $G$ \cite[2.5]{G:vanish}.
Dividing the Killing form of $\gt$ by twice the dual Coxeter number $\hch$ gives an even and indivisible symmetric bilinear form --- cf. \ \cite[p.~633]{GrossNebe} or \cite[pp.~180, 181]{SpSt} --- so there exists a unique indivisible quadratic form $s$ so that $2\hch \cdot b_s$ is the Killing form $\kappa$.  This 
is called the \emph{reduced Killing quadratic form}.

We now sketch how to determine the isomorphism class of $s$ over any field $k$.  It is harmless to assume that $G$ is simply connected.  The roots $\Phi$ and simple roots $\Pi$ index a basis $\{ h_\delta \mid \delta \in \Pi \} \cup \{ x_\alpha, x_{-\alpha} \mid \alpha \in \Phi \}$ for $\Lie(G)$.
As $s$ is invariant under $T$, it vanishes on each $x_{\pm \alpha}$, and it quickly follows that $s$ is an orthogonal sum of its restrictions to $\Lie(T)$ and $\Z x_\alpha + \Z x_{-\alpha}$ for each $\alpha \in \Phi$.  Put $r$ for the square-length ratio of long roots to short roots, so $r \in \{ 1, 2, 3 \}$.  The calculations in \cite[pp.~180, 181]{SpSt} show that $\Z x_\alpha + \Z x_{-\alpha}$ contributes a zero form to $s \ot k$ if $\alpha$ is short and $r$ is zero in $k$, otherwise it contributes a hyperbolic plane.  

As for the restriction to $\Lie(T)$, recall that there is a unique positive-definite quadratic form $q^\vee$ on the coroot lattice $Q^\vee$ (for the simple root system $\Phi$ of $G$) that takes the value 1 on short coroots and $r$ on long coroots.  Since $s$ restricts to a Weyl-invariant form on $\Lie(T)$, the formulas in \cite{SpSt} show that the restriction of $s$ to $\Lie(T)$ is $q^\vee$.  In summary, $s \ot k$ is an orthogonal sum of hyperbolic planes, a zero form (if $\car k \mid r$) and $q^\vee \ot k$.

To calculate $q^\vee \ot k$, fix a basis $\alpha_1^\vee, \ldots, \alpha_n^\vee$ of simple coroots and set $C$ to be the Cartan matrix with respect to the basis $\alpha_1, \ldots, \alpha_n$ of simple roots.  For $D$ the diagonal matrix whose $i$-th diagonal entry is the square-length $q^\vee(\alpha_i^\vee)$ of $\alpha_i^\vee$, the product $DC$ is a symmetric integer matrix with even entries on the diagonal, and 
\begin{equation} \label{q.def}
\text{$q^\vee(v) = \frac12 v^T DC v$ for $v \in Q^\vee$}.
\end{equation}
In case $\car k = 2$, formulas for the isometry class of $s$ can be found in \cite[\S3]{BabicCh}.

\subsection{A uniserial example} \label{uniserial.eg}
Let $\la, \mu \in X(T)_+$ such that $\mu = -w_0 \mu$, and let $V$ be a uniserial $G$-module with composition factors
\[
L(\mu), L(\la), L(\mu).
\]
Note that $V$ does not satisfy the hypotheses of Lemma \ref{headvsocle}.

\begin{eg}
\emph{If $\la = -w_0 \la$ and $\Ext^1_G(L(\la), L(\mu)) = k$, then $\dim(\Bil V)^G = 2$.}  To see why this is so, note that $V^*$ is also uniserial with the same composition factors, and that the corresponding module diagrams in the sense of \cite{BensonCarlson} are ``rigid'' by the hypothesis on $\Ext$, so by Proposition 6.5 of ibid. we can read off the elements of $\Hom_G(V, V^*)$ from the diagrams---clearly $\Hom_G(V, V^*)$ is 2-dimensional with a basis consisting of an isomorphism and a map that sends $V$ onto $\socle(V^*)$.
\end{eg}

The following provides a tool to check the $\Ext$ hypothesis in the example.

\begin{lem}\label{extlem} If $V(\la_2)$ has two composition factors $L(\lambda_{2})$ and $L(\la_1)$ (with $L(\lambda_{1})$ as the socle) and $H^{0}(\la_1) = L(\la_1)$, then $\Ext^1_G(L(\la_1), L(\la_2)) \cong k$.
\end{lem}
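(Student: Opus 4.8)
The plan is to turn the given uniserial data into a short exact sequence, compute the \emph{opposite} extension group $\Ext^1_G(L(\la_2),L(\la_1))$ by a long exact sequence in which the hypothesis $H^0(\la_1)=L(\la_1)$ does all the work, and then transport the answer across the standard simple-preserving duality. First I would record that, since $V(\la_2)$ has head $L(\la_2)$ and its only other composition factor is the socle $L(\la_1)$, its radical is $\rrad V(\la_2)=L(\la_1)$ and we have a non-split short exact sequence
\[
0 \to L(\la_1) \to V(\la_2) \to L(\la_2) \to 0,
\]
with $\la_1\neq\la_2$ because these are distinct factors. In particular this sequence already exhibits a nonzero class in $\Ext^1_G(L(\la_2),L(\la_1))$.

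Next I would apply the contravariant functor $\Hom_G(-,L(\la_1))$ to this sequence and read off the segment
\[
\Hom_G(V(\la_2),L(\la_1)) \to \Hom_G(L(\la_1),L(\la_1)) \xrightarrow{\ \delta\ } \Ext^1_G(L(\la_2),L(\la_1)) \to \Ext^1_G(V(\la_2),L(\la_1)).
\]
Here is where the hypothesis enters: because $L(\la_1)=H^0(\la_1)$, both outer terms are computed against a costandard module, and the standard $\Ext$-orthogonality of Weyl and induced modules (e.g.\ \cite[II.4.13]{Jantzen}) gives $\Hom_G(V(\la_2),H^0(\la_1))=0$ and $\Ext^1_G(V(\la_2),H^0(\la_1))=0$ since $\la_1\neq\la_2$. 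As $L(\la_1)$ is absolutely irreducible we have $\End_G(L(\la_1))=k$, so the connecting map $\delta$ is an isomorphism $k \iso \Ext^1_G(L(\la_2),L(\la_1))$.

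Finally I would invoke the contravariant, simple-preserving duality $M\mapsto M^{\tau}$ on rational $G$-modules (with $L(\la)^{\tau}\cong L(\la)$ and $V(\la)^{\tau}\cong H^0(\la)$, cf.\ \cite{Jantzen}). Since a contravariant exact duality satisfies $\Ext^1_G(A,B)\cong\Ext^1_G(B^{\tau},A^{\tau})$, taking $A=L(\la_1)$ and $B=L(\la_2)$ yields
\[
\Ext^1_G(L(\la_1),L(\la_2)) \cong \Ext^1_G(L(\la_2),L(\la_1)) \cong k,
\]
which is the desired conclusion.

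The only genuinely delicate point is the vanishing of $\Ext^1_G(V(\la_2),L(\la_1))$: a priori this group need not be zero, and it is precisely the identification $L(\la_1)=H^0(\la_1)$ that rewrites it as $\Ext^1_G(V(\la_2),H^0(\la_1))$ and kills it for free. One should also double-check the direction of the extension: the sequence built from $V(\la_2)$ naturally computes $\Ext^1_G(L(\la_2),L(\la_1))$, so the duality step is not cosmetic but is what converts the computed group into the $\Ext^1_G(L(\la_1),L(\la_2))$ demanded by the statement. Everything else is a formal unwinding of the long exact sequence together with $\End_G(L(\la_1))=k$.
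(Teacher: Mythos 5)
Your proof is correct and is essentially the paper's own argument: the same short exact sequence $0 \to L(\la_1) \to V(\la_2) \to L(\la_2) \to 0$, the same application of $\Hom_G(-,L(\la_1))$, and the same use of $H^0(\la_1)=L(\la_1)$ together with the standard $\Ext$-vanishing between Weyl and induced modules (Jantzen II.4.13/II.4.16) to kill both outer terms of the long exact sequence. The only difference is that you make explicit the final transport across the simple-preserving duality $\tau$ from $\Ext^1_G(L(\la_2),L(\la_1))$ to $\Ext^1_G(L(\la_1),L(\la_2))$, a step the paper's proof leaves implicit.
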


\begin{proof}
Apply $\Hom_G(?, L(\la_1))$ to the exact sequence
\[
\begin{CD}
0 @>>> L(\la_1) @>>> V(\la_2) @>>> L(\la_2) @>>> 0
\end{CD}
\]
to get
\begin{multline*}
\Hom_G(V(\la_2), L(\la_1)) \to \Hom_G(L(\la_1), L(\la_1)) \to \\ \to\Ext^1_G(L(\la_2), L(\la_1)) \to \Ext^1_G(V(\la_2), L(\la_1)).
\end{multline*}
The first term is zero because $\la_1 \ne \la_2$.  The last term is zero by \cite[Prop.~II.4.16]{Jantzen} because  $L(\la_1)$
has a good filtration.
\end{proof}

\section{Bilinear Forms on Induced Modules}

\subsection{ } The theory of bilinear forms on an induced module $H^0(\la)$ is notably different from that for irreducible and Weyl modules, and in general the forms on $H^0(\la)$ need not have much to do with $L(\la)$.  We start with the  following basic lemma.

\begin{lem} \label{comparable}
If $\la \in P_+$ and $-w_0\la$ are comparable in the partial ordering on $P$, then $\la = -w_0\la$.
\end{lem}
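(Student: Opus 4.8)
The plan is to reduce the combinatorial statement to a length comparison using a Weyl-invariant inner product. Fix a positive-definite $W$-invariant symmetric bilinear form $(\cdot,\cdot)$ on the real vector space $P \otimes_\Z \R$. I would record two standard inputs at the outset: first, that $-w_0$ permutes the simple roots and hence stabilizes the dominant cone, so that $-w_0\la \in P_+$ is again dominant; and second, that since both $w_0$ and negation are isometries for $(\cdot,\cdot)$, the weights $\la$ and $-w_0\la$ have equal length, i.e. $(-w_0\la, -w_0\la) = (\la,\la)$.

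Suppose $\la$ and $-w_0\la$ are comparable. I may assume $\la \le -w_0\la$ and set $\nu := -w_0\la - \la = \sum_i c_i \alpha_i$ with each $c_i \in \N$; if instead $-w_0\la \le \la$, the same argument applies with the roles of $\la$ and $-w_0\la$ interchanged, which is legitimate precisely because $-w_0\la$ is also dominant. I would then expand the length equality:
\[
(\la,\la) = (-w_0\la, -w_0\la) = (\la + \nu,\, \la + \nu) = (\la,\la) + 2(\la,\nu) + (\nu,\nu),
\]
which forces $2(\la,\nu) + (\nu,\nu) = 0$.

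It remains to observe that both summands are nonnegative. Since $\la$ is dominant, $(\la,\alpha_i) = \tfrac12(\alpha_i,\alpha_i)\langle \la,\alpha_i^\co\rangle \ge 0$ for every simple root $\alpha_i$, so $(\la,\nu) = \sum_i c_i (\la,\alpha_i) \ge 0$; and $(\nu,\nu) \ge 0$ because the form is positive-definite. A sum of two nonnegative reals that vanishes forces each to vanish, so $(\nu,\nu) = 0$ and hence $\nu = 0$, giving $\la = -w_0\la$ as desired.

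The argument is short, and essentially all of its content sits in the two preliminary facts — that $-w_0$ preserves $P_+$ and that $\la$ and $-w_0\la$ have equal length. The one point I would be careful to articulate, rather than a genuine obstacle, is that ``comparable'' permits either inequality, so the reverse direction really does require the dominance of $-w_0\la$ (not only of $\la$); once both weights are known to be dominant of equal length, the two cases are symmetric and collapse to the same computation.
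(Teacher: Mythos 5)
Your proof is correct, but it takes a genuinely different route from the paper's. The paper works entirely at the lattice level with no inner product: it writes $-w_0\lambda = \lambda + \sigma$, where comparability makes $\sigma$ a sum of positive roots or a sum of negative roots, applies $-w_0$ to both sides and subtracts to obtain $\sigma = w_0\sigma$, and concludes $\sigma = 0$ because $w_0$ interchanges positive and negative roots. That single manipulation absorbs both directions of comparability at once, so the paper never needs your first preliminary input (that $-w_0$ permutes the simple roots and so preserves $P_+$); the sign dichotomy on $\sigma$ does that work, and no auxiliary choice is made. Your argument instead buys the statement from Euclidean geometry: fix a $W$-invariant positive-definite form (which exists by averaging over the finite group $W$; in the reductive case one should also note that the $W$-fixed central directions are automatically orthogonal to the root span, so that $(\lambda,\alpha_i) \ge 0$ genuinely follows from $\langle \lambda,\alpha_i^\co\rangle \ge 0$), use that $-w_0$ is an isometry, and force $\nu = 0$ from $2(\lambda,\nu) + (\nu,\nu) = 0$. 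You are right to flag that the reverse inequality needs dominance of $-w_0\lambda$ --- that is the one place your case analysis has real content, and it is exactly the step the paper's symmetric treatment of $\sigma$ sidesteps. Your route is the standard length argument (the same mechanism behind, e.g., finiteness of dominant weights below a given weight) and generalizes to statements that genuinely need lengths; the paper's version is shorter, choice-free, and purely combinatorial. Both are complete proofs.
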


\begin{proof}
$-w_0 \la = \la + \s$ for $\s$ a sum of positive roots or a sum of negative roots.
Applying $-w_0$ to both sides and subtracting, we find that $\s = w_0 \s$, but $w_0$ interchanges positive and negative roots, hence $\s = 0$, i.e., $\la = -w_0 \la$.  
\end{proof}

Then we find:

\begin{lemma} \label{induced.lem}
If $H^0(\la)$ is reducible, then the pullback 
$$(\Bil H^0(\la)/L(\la))^G \to (\Bil H^0(\la))^G$$ 
is an isomorphism.
\end{lemma}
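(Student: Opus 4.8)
The plan is to recognize the pullback as the map induced by the quotient \(q\colon H^0(\la) \onto H^0(\la)/L(\la)\) and to deduce the isomorphism from Lemma~\ref{headvsocle} applied with \(V = H^0(\la)\) and \(U = L(\la)\), so that the whole argument reduces to checking the two hypotheses of that lemma. The essential structural input is the standard fact (see \cite{Jantzen}) that \(H^0(\la)\) has simple socle \(L(\la)\), and that \(L(\la)\) occurs among the composition factors of \(H^0(\la)\) with multiplicity one. Granting these, the work is to verify that \(L(\la) \subseteq \rrad H^0(\la)\) and that \(L(\la)\) and \((\head H^0(\la))^*\) have no common composition factor.

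For the first hypothesis, I would argue that the reducibility of \(H^0(\la)\) forces its socle into the radical. Since \(H^0(\la)\) has simple socle it is indecomposable, so if \(L(\la) \not\subseteq \rrad H^0(\la)\) then some maximal submodule \(M\) would miss \(L(\la)\); maximality of \(M\) together with simplicity of \(L(\la)\) would give \(H^0(\la) = L(\la) \oplus M\), contradicting indecomposability unless \(H^0(\la) = L(\la)\) is irreducible. As \(H^0(\la)\) is reducible by hypothesis, this yields \(L(\la) \subseteq \rrad H^0(\la)\).

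The main obstacle is the second hypothesis, where the self-dual case must be handled with care. The composition factors of \((\head H^0(\la))^*\) are the duals \(L(-w_0\mu)\) of the factors \(L(\mu)\) of \(\head H^0(\la)\); since these \(L(\mu)\) are in particular factors of \(H^0(\la)\), they satisfy \(\mu \le \la\). A common factor with \(L(\la)\) would require \(-w_0\mu = \la\), i.e.\ \(\mu = -w_0\la\), whence \(-w_0\la \le \la\). Lemma~\ref{comparable} then forces \(\la = -w_0\la\) and \(\mu = \la\), so that \(L(\la)\) would be a composition factor of \(\head H^0(\la)\). But the unique copy of \(L(\la)\) among the composition factors of \(H^0(\la)\) lies in the socle, hence in \(\rrad H^0(\la)\) by the previous paragraph, and therefore cannot appear in the head. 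This contradiction rules out a common factor, and Lemma~\ref{headvsocle} delivers the asserted isomorphism (injectivity of the pullback being automatic from surjectivity of \(q\)).
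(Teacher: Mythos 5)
Your proof is correct and takes essentially the same route as the paper's: both apply Lemma~\ref{headvsocle} with $U = L(\la)$ and invoke Lemma~\ref{comparable} to rule out a common composition factor, the paper merely phrasing the head computation dually, via the socle of $H^0(\la)^* = V(-w_0\la)$. You are in fact somewhat more careful than the published proof, which leaves implicit both the containment $L(\la) \subseteq \rrad H^0(\la)$ and the multiplicity-one argument needed to dispose of the borderline case $\la = -w_0\la$, which Lemma~\ref{comparable} alone does not exclude.
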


\begin{proof}
The dual of $\head H^0(\la)$ is the socle of $H^0(\la)^* = V(-w_0\la)$.  By Lemma \ref{comparable}, $\la$ cannot be less than $-w_0\la$, and therefore $L(\la)$ cannot be a component of the socle.  The conclusion follows by Lemma \ref{headvsocle}.
\end{proof}


\begin{eg}
Let $G$ be a quasi-simple group.  Then
\[
\dim (\Bil H^0(\at))^G = \begin{cases}
4 & \text{if $\car k = 2$ and $G$ has type $D_n$ for $n \ge 4$ and even;} \\
2 & \text{if $\car k = 2$ and $G$ has type $B_n$ or $C_n$ with $n \ge 2$;} \\
1 & \text{otherwise.}
\end{cases}
\]
Note that the dimension can be bigger than 1, unlike for irreducible and Weyl modules.
To see the claim, we combine Lemma \ref{induced.lem} with the preceding discussion and with the $G$-module structure of $V(\at) = H^0(\at)^*$ given in \cite{Hiss}.  Put $\hsr$ for the highest short root.

Suppose $G$ has type $B_n$ or $C_n$ for $n \ge 2$ and $\car k = 2$.  Then $H^0(\at)/L(\at)$ is either $k \oplus L(\hsr)$ or is uniserial with composition factors $k$, $L(\hsr)$, $k$ as in Section \ref{uniserial.eg}.  In the latter case, $V(\hsr)$ has socle $k$ by \cite{Hiss}, so Lemma \ref{extlem} applies and in both cases we find $\dim (\Bil H^0(\at))^G = 2$ as claimed.

If $G$ has type $F_4$ and $\car k = 2$, or if $G$ has type $G_2$ and $\car k = 3$, then $H^0(\at)/L(\at)$ is $L(\hsr)$, so $\dim (\Bil H^0(\at))^G = 1$.

In the remaining cases, writing $Z$ for the scheme-theoretic center of the simply connected cover of $G$, $H^0(\at)/L(\at) \cong \Lie(Z)^*$, on which $G$ acts trivially.  If $Z$ is not \'etale, then $\dim (\Bil H^0(\at))^G = (\dim \Lie(Z))^2$, whence the claim.

Note that, in calculating the dimension of $(\Bil H^0(\at))^G$, we implicitly gave formulas for all of the $G$-invariant bilinear forms.

We remark that for $G$ of type $A$, $D$, or $E$, $H^0(\at)$ is the Lie algebra of the adjoint group \cite[3.5(2)]{G:vanish}.
\end{eg}


\subsection{A necessary condition}

\begin{lem} \label{C:inducedbilinear}
If there is a nonzero $G$-invariant bilinear form on $H^0(\la)$ or $T(\la)$, then $2\la$ is in the root lattice $Q$.
\end{lem}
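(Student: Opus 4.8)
The plan is to use only the $T$-invariance of the form, reducing everything to a statement about weights. A nonzero $G$-invariant bilinear form $b$ on a module $V$ is in particular a nonzero $T$-invariant element of $\Bil V = V^* \ot V^*$, and since $T$ is a torus this space coincides with the zero-weight space $(V^* \ot V^*)_0$. Decomposing $V^*$ into $T$-weight spaces, a nonzero weight-$0$ tensor forces the existence of weights $\mu_1, \mu_2$ of $V^*$ with $\mu_1 + \mu_2 = 0$; equivalently, since the weights of $V^*$ are the negatives of those of $V$, there is a weight $\nu$ of $V$ such that $-\nu$ is also a weight of $V$.

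Next I would record that every weight of $H^0(\la)$ and of $T(\la)$ lies in the coset $\la + Q$. For $H^0(\la)$ this follows because its character is $W$-invariant with highest weight $\la$: any weight $\nu$ has a dominant $W$-conjugate $\nu^+ \le \la$, so $\la - \nu^+$ is a nonnegative integer combination of simple roots and hence lies in $Q$, while $\nu - \nu^+ \in Q$ because $W$ preserves cosets of $Q$; thus $\nu \equiv \la \pmod{Q}$. For $T(\la)$ the same conclusion holds because $T(\la)$ has a good filtration whose sections are the modules $H^0(\mu)$ with $\mu \le \la$, and each such $\mu$ satisfies $\mu \equiv \la \pmod{Q}$; so all weights of $T(\la)$ lie in $\bigcup_{\mu \le \la}(\mu + Q) = \la + Q$.

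Finally I would combine the two observations. Applying the first step to $V = H^0(\la)$ or $V = T(\la)$ yields a weight $\nu$ with both $\nu$ and $-\nu$ weights of $V$; by the second step $\nu \in \la + Q$ and $-\nu \in \la + Q$. Since $-Q = Q$, the latter gives $\nu \in -\la + Q$, so $\la + Q = -\la + Q$, that is $2\la \in Q$, as claimed. Note that this argument needs nothing about $b$ beyond its being a nonzero $T$-fixed vector, so degeneracy of the form is irrelevant.

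The argument is short, and the only point requiring care is the weight-coset computation for $T(\la)$: one must invoke the existence of a good (or Weyl) filtration to control which composition factors $L(\mu)$, equivalently which cosets $\mu + Q$, can occur, rather than attempting to analyze the tilting module directly. I expect no serious obstacle beyond making that filtration input explicit.
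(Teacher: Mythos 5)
Your proof is correct and takes essentially the same route as the paper: both arguments extract a zero weight of $\Bil(V)$ from $T$-invariance and use that every weight of $H^0(\la)$ or $T(\la)$ is congruent to $\la$ modulo $Q$. The only cosmetic differences are that you justify the weight-coset claim explicitly (via the good filtration for $T(\la)$, which the paper leaves implicit) and that by working with a weight $\nu$ and its negative directly you avoid the paper's final step invoking that $-w_0$ normalizes $Q$.
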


\begin{proof}
On the one hand, the action of the torus $T$ on the representation $V = H^0(\la)$ or $T(\la)$ turns $V^* \ot V^* = \Bil(V)$ into a graded vector space with grade group $X(T)$, and the hypothesis gives that 0 is a weight.  On the other hand, all weights of $V$ are congruent to $\la$ mod the root lattice $Q$, hence all weights of $\Bil(V)$ are congruent to $-2w_0\la$ mod $Q$, so $-2w_0\la \in Q$.  As $-w_0$ normalizes $Q$, the conclusion follows.
\end{proof}

Note that $-w_0$ acts on $P/Q$ as $-1$, hence the condition $\la = -w_0 \la$ (for the existence of a nonzero $G$-invariant bilinear form on $V(\la)$ or $L(\la)$) implies that $2\la \in Q$.

\subsection{} The following result demonstrates that the number of $G$-invariant bilinear forms on the induced module 
$H^{0}(\lambda)$ is related to the rational $B$-cohomology. In \cite{HemmerNakano}, it was shown that similar $B$-cohomology calculations 
for $\GL_{n}(k)$ are related to the cohomology of Specht modules for the symmetric group on $n$ letters. 

\begin{theorem} \label{H0.bil}
Let $\lambda\in X(T)_{+}$. Then for $N = |\roots|/2$,
\begin{align*}
\Nr{G}{H^{0}(\lambda)}&=\dim \operatorname{Ext}^{N}_{B}(H^{0}(\lambda),-\lambda-2\rho) \\
&=
\dim \operatorname{H}^{N}(B,V(-w_{0}\lambda)\otimes (-\lambda-2\rho)).
\end{align*}
\end{theorem}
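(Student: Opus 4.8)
The plan is to reduce the computation of $(\Bil H^0(\lambda))^G$ to derived induction from $B$ to $G$ and then apply Serre duality on the flag variety $G/B$. First I would record the identification $(\Bil H^0(\lambda))^G = \Hom_G(H^0(\lambda), H^0(\lambda)^*) = \Hom_G(H^0(\lambda), V(-w_0\lambda))$, using $H^0(\lambda)^* = V(-w_0\lambda)$ from Section \ref{back.sec}. The third expression in the theorem is then just the second one rewritten: for finite-dimensional $M$ one has $\Ext^n_B(M,N) \cong \H^n(B, M^*\otimes N)$, so with $M = H^0(\lambda)$ and $N = -\lambda-2\rho$ we get $\Ext^N_B(H^0(\lambda),-\lambda-2\rho) \cong \H^N(B, V(-w_0\lambda)\otimes(-\lambda-2\rho))$. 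Thus the entire content lies in the first equality, and it suffices to produce a natural isomorphism of the relevant spaces (the dimension statement follows).

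To establish that first equality I would invoke the Grothendieck spectral sequence attached to the composite $\Hom_G(M, \ind_B^G(-)) = \Hom_B(M, -)$; this applies because restriction $G\text{-mod}\to B\text{-mod}$ is exact and $\ind_B^G$, as its right adjoint, carries injectives to injectives (hence to $\Hom_G(M,-)$-acyclics). It reads
\[
E_2^{i,j} = \Ext^i_G(M, R^j\ind_B^G N) \;\Longrightarrow\; \Ext^{i+j}_B(M, N),
\]
and I would specialize to $M = H^0(\lambda)$ and $N = -\lambda-2\rho$. The crux is then to compute the derived induction $R^j\ind_B^G(-\lambda-2\rho) = \H^j(G/B, \mathcal{L}(-\lambda-2\rho))$.

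Here Serre duality on $G/B$ enters. Its dualizing sheaf is $\mathcal{L}(-2\rho)$: in the convention of this paper, where $B$ corresponds to the negative roots, the cotangent space at the base point has weights $\{-\alpha \mid \alpha > 0\}$, of determinant $-2\rho$. Serre duality then gives $\H^j(G/B, \mathcal{L}(-\lambda-2\rho))^* \cong \H^{N-j}(G/B, \mathcal{L}(\lambda)) = R^{N-j}\ind_B^G(\lambda)$, and Kempf vanishing makes this vanish whenever $N-j > 0$, while $R^0\ind_B^G(\lambda) = H^0(\lambda)$. Consequently $R^j\ind_B^G(-\lambda-2\rho)$ is concentrated in the single degree $j = N$, where it equals $H^0(\lambda)^* = V(-w_0\lambda)$.

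With the derived induction supported in one row, the spectral sequence degenerates at $E_2$, yielding $\Ext^{i+N}_B(H^0(\lambda),-\lambda-2\rho) \cong \Ext^i_G(H^0(\lambda), V(-w_0\lambda))$ for all $i$; taking $i = 0$ gives exactly $\Ext^N_B(H^0(\lambda),-\lambda-2\rho) \cong \Hom_G(H^0(\lambda), V(-w_0\lambda)) = (\Bil H^0(\lambda))^G$. I expect the one genuinely delicate point to be pinning down the sign convention for the dualizing sheaf, so that the twist comes out to precisely $-\lambda-2\rho$ and the surviving cohomology is $H^0(\lambda)$ rather than some other induced or Weyl module; once that is fixed, the degeneration of the spectral sequence and the rewriting as $B$-cohomology are purely formal.
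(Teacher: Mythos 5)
Your proposal is correct and follows essentially the same route as the paper: the paper also runs the Grothendieck spectral sequence $E_2^{i,j}=\Ext^i_G(H^0(\lambda),R^j\operatorname{ind}_B^G(w_0\cdot(-w_0\lambda)))\Rightarrow \Ext^{i+j}_B(H^0(\lambda),w_0\cdot(-w_0\lambda))$, uses Serre duality (cited in Jantzen's algebraic form, where your dualizing-sheaf computation is encoded in the identity $w_0\cdot(-w_0\lambda)=-\lambda-2\rho$) together with Kempf vanishing to concentrate the derived induction in degree $N$ as $V(-w_0\lambda)$, and then takes $i=0$. Your geometric phrasing via $\mathcal{L}(-2\rho)$ on $G/B$ is just a repackaging of the same argument, and your convention check comes out right.
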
 

\begin{proof} Recall that $H^{0}(\lambda)^{*}=V(-w_{0}\lambda)$. Furthermore, by using 
Serre duality \cite[II 4.2(9)]{Jantzen}, 
$$V(-w_{0}\lambda)\cong H^{N}(w_{0}\cdot (-w_{0}\lambda))\cong H^{N}(-\lambda-2\rho).$$ 

Consider the following spectral sequence \cite[I.4.5]{Jantzen}
$$
E_{2}^{i,j}=\Ext^{i}_{G}(H^{0}(\lambda),R^{j}\text{ind}_{B}^{G}\ (w_{0}\cdot (-w_{0}\lambda)))\Rightarrow 
\Ext^{i+j}_{B}(H^{0}(\lambda),w_{0}\cdot (-w_{0}\lambda)).$$
According to Serre duality \cite[II.4.2(9)]{Jantzen}, 
\begin{eqnarray*} 
R^{i}\text{ind}_{B}^{G}\ (w_{0}\cdot (-w_{0}\lambda))
&\cong& [R^{N-i}\text{ind}_{B}^{G}\ (-(w_{0}\cdot (-w_{0}\lambda)+2\rho))]^*\\
&\cong& [R^{N-i}\text{ind}_{B}^{G}\ (\lambda)]^*.
\end{eqnarray*} 
By assumption $\lambda\in X(T)_{+}$, so by Kempf's vanishing theorem, 
$$R^{N-i}\text{ind}_{B}^{G}\ (\lambda)=0$$ 
when $N-i>0$ (or $N>i$). 

This shows that there is only one non-zero row in the 
spectral sequence, thus the spectral sequence thus collapses and for all $i>0$: 
\begin{align*}
\Ext^{i}_{G}(H^{0}(\lambda),R^{N}\text{ind}_{B}^{G}\ (w_{0}\cdot (-w_{0}\lambda)))&\cong
\Ext^{i+N}_{B}(H^{0}(\lambda),w_{0}\cdot (-w_{0}\lambda)) \\
&\cong \Ext^{i+N}_{B}(H^{0}(\lambda),\lambda-2\rho) 
\end{align*} 
The result now follows by specializing to the case when $i=0$. 
\end{proof}

%
%

\subsection{$\textbf{SL}_n$ examples; symmetric powers} 
Let $G=\SL_{n}$ and consider $H^{0}(d\omega_{1})$ where $\omega_{1}$ is the first fundamental weight. 
We have $H^{0}(d\omega_{1})\cong S^{d}(V)$ where $V$ is the natural $n$-dimensional representation. 

\begin{example} For $G=\SL_{2}$, $H^{0}(d\omega_{1})^{*}\cong V(d\omega_{1})$ has a simple socle \cite[II.5.16 Corollary]{Jantzen} and 
$H^{0}(d\omega_{1})$ is multiplicity-free as a $G$-module (note the weight spaces are all one-dimensional).  So taking $U = \rrad H^0(d\omega_1)$ in Lemma \ref{headvsocle} gives 
\[
(\Bil H^0(d \omega_1))^G = (\Bil \head H^0(d \omega_1))^G \cong k.
\]
That is, base change from the integral model as in Section \ref{integral} provides a nonzero $\SL_2$-invariant bilinear form on $S^d(k^2)$, and it is the only one up to a factor in $k^\times$.
\end{example}

\begin{example} Let $G=\SL_{n}$ where $n\geq 3$. The $G$-modules $H^{0}(d\omega_{1})$ can have complicated submodule structures. However, 
one can employ the weight criterion in Corollary~\ref{C:inducedbilinear} to deduce the following: If $n\nmid 2d$ then $2d\omega_{1}\notin {\mathbb Z}\roots$, and 
$\Nr{G}{H^{0}(d\omega_{1})}=0$. 
\end{example} 

\subsection{$\SL_3$ examples}  \label{SL3.H}

\begin{example} Let $G=\SL_{3}$ and $\lambda$ be a generic weight in the lowest $p^{2}$-alcove (see \cite{DotySullivan}). Then the following are true: 
(i) the composition factors of $H^{0}(\lambda)$ are multiplicity free and (ii) the head of $H^{0}(\lambda)$ is a single irreducible representation. 
Moreover, if $\lambda\neq -w_{0}\lambda$ then the head of $H^{0}(\lambda)$ is not a composition factor of $V(-w_{0}\lambda)$. 
Therefore, in this case, $\Nr{G}{H^{0}(\lambda)}=0$.  
\end{example}

Suppose $p = \car k \ge 3$ and consider the following 
weights of $G = \SL_3$ expressed in terms of the fundamental weights: 
\begin{eqnarray*} 
\lambda_{1}&=& (0,0) \\
\lambda_{2}&=& s_{\alpha_{1}+\alpha_{2},p(\alpha_{1}+\alpha_{2})}\cdot \lambda_{1} =(p-2,p-2) \\
\lambda_{3}&=& s_{\alpha_{1},p\alpha_{1}}\cdot \lambda_{2}=(p,p-3) \\
\lambda_{4}&=& s_{\alpha_{2},p\alpha_{2}}\cdot \lambda_{2}=(p-3,p)  = -w_0 \la_3
\end{eqnarray*} 
We have indicated how $\lambda_{j}$, for $j=1,2,3$ are linked to $(0,0)$ under the dot action of the affine Weyl group $W_{p}$.

By using the standard translation functor arguments \cite[II.7.19, II.7.20]{Jantzen} or employing \cite{DotySullivan} we can deduce the following facts.  
The representation $H^{0}(\lambda_{2})$ is uniserial with two composition factors (from head to socle): 
\begin{equation} \label{SL3.la2}
L(0,0),\ L(\lambda_{2})
\end{equation}
For $j = 3, 4$, $H^{0}(\lambda_{j})$  is uniserial with two composition factors (from head to socle): 
$$L(\lambda_{2}),\ L(\lambda_{j}).$$ 

\begin{example}  For $j = 2, 3, 4$, $(\Bil H^{0}(\lambda_j))^{G} = (\Bil \head H^0(\lambda_j))^G \cong k$ by Lemma \ref{headvsocle}.
Pulling back along the surjection $T(\la_3) \to H^0(\la_3)$ from \eqref{triple} gives a nonzero $G$-invariant bilinear form on the tilting module $T(\la_3)$ also.  (In fact it generates $(\Bil T(\la_3))^G$ by Example \ref{tilting.eg} below.)
That is, $H^0(\la_3)$ and $T(\la_3)$ each have a nonzero $G$-invariant bilinear form, yet $\la_3 \ne -w_0 \la_3$, in contrast with the situation for simple and Weyl modules described in Section \ref{bil.weyl}.
\end{example} 

\section{Bilinear Forms on Tilting Modules}

\subsection{}  The tilting module $T(\lambda)$ has both a good and Weyl filtration, and the composition factor of highest weight in $T(\lambda)$ 
is $L(\lambda)$.  We briefly discuss the maps induced by applying the functor $V \mapsto (\Bil V)^G$ to \eqref{triple}.

\begin{lem} \label{T.pull}
For all $\la \in X(T)_+$, the pullback map $(\Bil T(\la))^G \to (\Bil V(\la))^G$ is surjective.  If $T(\la)$ is reducible, then the composition 
\[
(\Bil H^0(\la))^G \to (\Bil T(\la))^G \to (\Bil V(\la))^G
\]
is zero.
\end{lem}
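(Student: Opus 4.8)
The plan is to pass to the $\Hom$ description $(\Bil V)^G=\Hom_G(V,V^*)$, under which a form $b$ becomes $\hat b:V\to V^*$, the pullback along the canonical inclusion $i:V(\la)\hookrightarrow T(\la)$ becomes $\hat b\mapsto i^*\hat b\,i$, and the pullback along the canonical surjection $\pi:T(\la)\twoheadrightarrow H^0(\la)$ becomes $\hat c\mapsto \pi^*\hat c\,\pi$ (using $T(\la)^*=T(-w_0\la)$, $V(\la)^*=H^0(-w_0\la)$, and $H^0(\la)^*=V(-w_0\la)$). I would organize everything around the defining sequence $0\to V(\la)\xrightarrow{i}T(\la)\to Q\to 0$, in which $Q$ is Weyl-filtered (with factors $V(\mu)$, $\mu<\la$); dualizing gives $0\to Q^*\to T(\la)^*\xrightarrow{i^*}V(\la)^*\to 0$ with $Q^*$ good-filtered, while $T(\la)^*=T(-w_0\la)$ is itself a tilting module and hence good-filtered.

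For surjectivity I would factor the pullback $\hat b\mapsto i^*\hat b\,i$ as
\[
\Hom_G(T(\la),T(\la)^*)\xrightarrow{\ \cdot\, i\ }\Hom_G(V(\la),T(\la)^*)\xrightarrow{\ i^*\cdot\ }\Hom_G(V(\la),V(\la)^*)
\]
and show each arrow is onto. Applying $\Hom_G(-,T(\la)^*)$ to the defining sequence, the cokernel of the first arrow injects into $\Ext^1_G(Q,T(\la)^*)$, which vanishes since $Q$ is Weyl-filtered and $T(\la)^*$ good-filtered. Applying $\Hom_G(V(\la),-)$ to the dualized sequence, the cokernel of the second arrow injects into $\Ext^1_G(V(\la),Q^*)$, which vanishes since $V(\la)$ is a Weyl module and $Q^*$ is good-filtered. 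The composite of two surjections is surjective. (When $\la\ne -w_0\la$ the target is already zero, so the argument has content only in the self-dual case, but it is uniform in $\la$.)

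For the vanishing of the composite I would first note that $T(\la)$ reducible forces $H^0(\la)$ reducible (the maps in \eqref{triple} are isomorphisms simultaneously), so Lemma~\ref{induced.lem} is available. Composing the two pullbacks yields $\hat c\mapsto (\pi i)^*\hat c\,(\pi i)$, i.e.\ pullback along $\pi i:V(\la)\to H^0(\la)$; by the commutativity up to scalar of \eqref{triple}, this $\pi i$ is a nonzero multiple of the canonical map, which factors as $V(\la)\twoheadrightarrow L(\la)\hookrightarrow H^0(\la)$. Hence the composite is the inflation along $V(\la)\twoheadrightarrow L(\la)$ applied to the restriction $(\Bil H^0(\la))^G\to(\Bil L(\la))^G$ to the socle $L(\la)=\soc H^0(\la)$. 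Lemma~\ref{induced.lem} says every $G$-invariant form on $H^0(\la)$ is inflated from $H^0(\la)/L(\la)$ and therefore vanishes on $L(\la)$, so this restriction map is zero and the whole composite is zero.

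I expect the main obstacle to be the homological bookkeeping in the surjectivity argument: one must be certain that $Q=T(\la)/V(\la)$ is Weyl-filtered and that $Q^*$ and $T(\la)^*$ are good-filtered, so that both $\Ext^1$ groups vanish. These rest on the standard facts that the canonical $V(\la)\hookrightarrow T(\la)$ is the bottom step of a Weyl filtration and that dualizing interchanges Weyl and good filtrations. The one further point needing care is the identification of $\pi i$ with the canonical map $V(\la)\to H^0(\la)$ (equivalently the compatibility of \eqref{triple}), since it is this that pins the second composite to the restriction-to-socle map killed by Lemma~\ref{induced.lem}.
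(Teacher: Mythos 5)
Your proof is correct, but for the surjectivity half it takes a genuinely different route from the paper. The paper's argument is short and non-homological: since $(\Bil V(\la))^G$ is nonzero only when $\la=-w_0\la$, in which case it is one-dimensional, it suffices to exhibit a single form on $T(\la)$ with nonzero pullback; the paper takes an isomorphism $f\colon T(\la)\iso T(-w_0\la)=T(\la)^*$ and uses that $\la$ and $-w_0\la$ occur with multiplicity $1$ in $T(\la)$ and $V(\la)$ to see that the pullback of $f$ is nonzero. You instead run a dévissage through the sequence $0\to V(\la)\to T(\la)\to Q\to 0$ and its dual, killing both cokernels with $\Ext^1_G(\text{Weyl-filtered},\text{good-filtered})=0$ (the same vanishing the paper invokes in the proof of Theorem \ref{tiltingforms}). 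Your version is more robust: it proves surjectivity of $\Hom_G(T(\la),T(\la)^*)\to\Hom_G(V(\la),V(\la)^*)$ without using that the target is at most one-dimensional, so it would survive in settings where that classification is unavailable; the paper's argument buys brevity by exploiting self-duality of tilting modules and multiplicity-one weight spaces. For the second claim your argument is essentially the paper's, which simply cites commutativity of \eqref{triple} and Lemma \ref{induced.lem}; you usefully spell out the two points left implicit there, namely that $\pi i$ is a nonzero multiple of the canonical map $V(\la)\onto L(\la)\hookrightarrow H^0(\la)$ (if you want to avoid leaning on the ``commutes up to scalar'' remark, note that $\pi$ must be nonzero on the one-dimensional $\la$-weight space of $T(\la)$ by surjectivity, so $\pi i\ne 0$ in the one-dimensional space $\Hom_G(V(\la),H^0(\la))$) and that forms pulled back from $H^0(\la)/L(\la)$ vanish on the socle $L(\la)$. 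Your reduction from ``$T(\la)$ reducible'' to ``$H^0(\la)$ reducible'' is also needed and correct, since the maps in \eqref{triple} are isomorphisms simultaneously.
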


\begin{proof}
Assume $(\Bil V(\la))^G \ne 0$, so $\la = -w_0 \la$ and there exists an isomorphism $f \!: T(\la) \xrightarrow{\sim} T(-w_0 \la) = T(\la)^*$.  As $\la$ and $-w_0\la$ are weights of $T(\la)$ (and $V(\la)$) of multiplicity 1, the pullback of $f$ to $V(\la)$ is nonzero, proving the first claim.  Commutativity of \eqref{triple} and Lemma \ref{induced.lem} give the second claim.
\end{proof}

The lemma shows that when $T(\la)$ is reducible:
\begin{equation} \label{T.red}
\dim (\Bil T(\la))^G \ge \dim (\Bil H^0(\la))^G + \dim (\Bil V(\la))^G.
\end{equation}

\subsection{} We now compute the dimension of the space $G$-invariant bilinear forms on $T(\lambda)$ in terms of the good filtration multiplicities. 
Define $[T(\lambda):H^{0}(\mu)]$ to be the number of times $H^{0}(\mu)$ appears in a good filtration for $T(\lambda)$; it equals $\Hom_G(V(\mu), T(\la))$ by \cite[Prop.~II.4.16(a)]{Jantzen}, so it
is independent of the choice of good filtration.

\begin{theorem}\label{tiltingforms} Let $G$ be a reductive algebraic group. Then for 
each $\lambda\in X(T)_{+}$,  
\begin{enumerate} 
\item \label{tilting.gen} $\Nr{G}{T(\lambda)}=\sum_{\mu\in X(T)_{+}}[T(-w_{0}\lambda):H^{0}(\mu)][T(\lambda):H^{0}(\mu)]$. 
\item \label{tilting.part} If $\lambda=-w_{0}\lambda$ then 
$\Nr{G}{T(\lambda)}=\sum_{\mu\in X(T)_{+}} [T(\lambda):H^{0}(\mu)]^{2}$.  
\end{enumerate} 
\end{theorem}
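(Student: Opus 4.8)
The plan is to compute $(\Bil T(\la))^G = \Hom_G(T(\la), T(\la)^*)$ by exploiting the fact that $T(\la)^* = T(-w_0\la)$ is again a tilting module, so that both source and target have good and Weyl filtrations. The key tool is the $\Ext$-orthogonality between Weyl modules and induced modules, namely $\dim \Hom_G(V(\mu), H^0(\nu)) = \delta_{\mu\nu}$ and $\Ext^i_G(V(\mu), H^0(\nu)) = 0$ for $i > 0$ \cite[Prop.~II.4.13]{Jantzen}. Together with the interpretation of good-filtration multiplicities as $\Hom$-dimensions recorded just before the statement, this should let me reduce a $\Hom$ between tilting modules to a sum over the filtration multiplicities.

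First I would fix a good filtration of the target $T(-w_0\la) = T(\la)^*$ with sections $H^0(\mu)$, each appearing $[T(-w_0\la):H^0(\mu)]$ times. To compute $\Hom_G(T(\la), T(\la)^*)$ I would apply the functor $\Hom_G(T(\la), -)$ to the good filtration. Because $T(\la)$ has a \emph{Weyl} filtration, and $\Ext^i_G(V(\mu'), H^0(\mu)) = 0$ for $i > 0$, the functor $\Hom_G(T(\la), -)$ is exact on short exact sequences of modules with good filtration; hence $\dim \Hom_G(T(\la), T(\la)^*)$ equals the sum over the good sections of $\dim \Hom_G(T(\la), H^0(\mu))$. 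The same $\Ext$-vanishing argument then computes $\dim \Hom_G(T(\la), H^0(\mu))$ by running a Weyl filtration of $T(\la)$: it equals the number of times $V(\mu)$ occurs, which is $[T(\la):H^0(\mu)]$ by duality between Weyl- and good-filtration multiplicities (since $T(\la)$ is self-dual up to the $-w_0$ twist, its Weyl-filtration multiplicity of $V(\mu)$ matches the good-filtration multiplicity $[T(\la):H^0(\mu)]$). Multiplying and summing over $\mu$ yields part \eqref{tilting.gen}.

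\textbf{The main obstacle} I anticipate is bookkeeping the two filtrations correctly and making sure the $-w_0$ twists land in the right place: one must be careful that the Weyl-filtration multiplicity of $V(\mu)$ in $T(\la)$ coincides with $[T(\la):H^0(\mu)]$, using $V(\mu)^* = H^0(-w_0\mu)$ and the self-duality $T(\la)^* = T(-w_0\la)$, and that the good-filtration multiplicities of the \emph{target} $T(-w_0\la)$ are what supply the first factor $[T(-w_0\la):H^0(\mu)]$. Equivalently, and perhaps more cleanly, I would write $\Hom_G(T(\la), T(\la)^*) = \Hom_G(T(\la) \otimes T(\la), k)^{?}$---no, better to keep it as a $\Hom$ and track the summation index through a single consistent choice.

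Part \eqref{tilting.part} is then immediate: when $\la = -w_0\la$ we have $T(-w_0\la) = T(\la)$, so $[T(-w_0\la):H^0(\mu)] = [T(\la):H^0(\mu)]$ and each summand becomes a square, giving $\sum_\mu [T(\la):H^0(\mu)]^2$. I would present \eqref{tilting.gen} in full and deduce \eqref{tilting.part} in one line.
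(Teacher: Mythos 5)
Your proposal is correct and follows essentially the same route as the paper: both rest on $\Ext^1_G(V(\sigma_1),H^0(\sigma_2))=0$ to make $\Hom_G(T(\lambda),-)$ exact along a good filtration of $T(-w_0\lambda)=T(\lambda)^*$, reducing everything to $\sum_\mu [T(-w_0\lambda):H^0(\mu)]\dim\Hom_G(T(\lambda),H^0(\mu))$. The only divergence is the last step: the paper evaluates $\dim\Hom_G(T(\lambda),H^0(\mu))$ at once via Jantzen's duality $\tau$ (with $T(\lambda)^\tau=T(\lambda)$ and $H^0(\mu)^\tau=V(\mu)$, giving $\Hom_G(T(\lambda),H^0(\mu))\cong\Hom_G(V(\mu),T(\lambda))$, whose dimension is $[T(\lambda):H^0(\mu)]$ by definition), which cleanly settles the $-w_0$ bookkeeping you worried about --- your route via ordinary duality alone only yields that the Weyl-filtration multiplicity of $V(\mu)$ in $T(\lambda)$ equals $[T(-w_0\lambda):H^0(-w_0\mu)]$, and identifying this with $[T(\lambda):H^0(\mu)]$ still requires the $\tau$-duality or the standard character argument (both filtrations expand $\operatorname{ch} T(\lambda)$ in the linearly independent Weyl characters $\chi(\mu)$), so you should cite one of these explicitly.
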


\begin{proof} \eqref{tilting.gen} According to \cite[II 4.13 Proposition]{Jantzen}, $\Ext_{G}^{1}(V(\sigma_{1}),H^{0}(\sigma_{2}))=0$ for all $\sigma_{1},\sigma_{2}\in X(T)_{+}$.
Since $T(\lambda)$ has a Weyl filtration, it follows that $\Ext_{G}^{1}(T(\lambda),H^{0}(\sigma))=0$ for all $\lambda,\sigma\in X(T)_{+}$.
Therefore, the functor $\Hom_{G}(T(\lambda),-)$ is exact on short exact sequences of modules that admit good filtrations. 
Now $T(-w_{0}\lambda)$ admits a good filtration, thus 
\begin{align*}
\Nr{G}{T(\lambda)} &= \dim \Hom_{G}(T(\lambda),T(-w_{0}\lambda)) \\
&=\sum_{\mu\in X(T)_{+}} [T(-w_{0}\lambda):H^{0}(\mu)]\dim \Hom_{G}(T(\lambda),H^{0}(\mu)).
\end{align*}
Because $T(\lambda)=T(\lambda)^{\tau}$ and 
$H^{0}(\lambda)=V(\lambda)^{\tau}$ under the duality $\tau$ defined in \cite[II 2.12, 2.13]{Jantzen}, $\Hom_G(T(\la), H^0(\mu)) = \Hom_G(V(\mu), T(\la))$ and \eqref{tilting.gen} follows.
Part \eqref{tilting.part} follows immediately from \eqref{tilting.gen}. 
\end{proof} 

In the sums in Theorem \ref{tiltingforms}, the $\mu = \la$ term contributes 0 if $\la \ne -w_0 \la$ (by Lemma \ref{comparable}) and 1 if $\la = -w_0 \la$.  In either case, the sum restricted to $\mu \ne \la$ gives the dimension of the kernel of the pullback $(\Bil T(\la))^G \to (\Bil V(\la))^G$.
 
\subsection{$\SL_3$ examples} Let $G=\SL_{3}$ with $p\geq 3$ and let $\la_j$ for $j = 1, 2, 3, 4$ be as in Section \ref{SL3.H}. 

\begin{example}  \label{tilting.la2}
The tilting module $T(\lambda_{2})$ is uniserial with composition factors (from the head to the socle):
$$L(0,0),\  L(\la_2),\  L(0,0).$$
As in Section \ref{uniserial.eg}, we find that $\dim (\Bil T(\la_2))^G = 2$, which agrees with the $1^2 + 1^2 = 2$ provided by
Theorem~\ref{tiltingforms}\eqref{tilting.part}, and we find equality in  \eqref{T.red}.  
\end{example} 

\begin{example} \label{tilting.eg} The structure of the tilting module $T(\la_j)$ for $j = 3, 4$ is given by the following diagrams in the style of \cite{BensonCarlson}, with the head is on top and the socle is on the bottom:
\[
\xymatrix@C=0.1pc@R=0.5pc{
&&L(\la_2)& \\
&L(\la_j) \ar@{-}[ur] \ar@{-}[dr] & & L(0,0) \ar@{-}[ul] \ar@{-}[dl] \\
&& L(\la_2) & }
\]
Therefore, $\Nr{G}{T(\lambda_{j})}=1$ which agrees with Theorem~\ref{tiltingforms}\eqref{tilting.gen} by looking at the structure of the tilting module above with its 
good filtration factors $H^0(\la_j)$, $H^0(\la_2)$.
\end{example} 

\section{Quadratic Forms on Tilting modules}

\subsection{Modules with a good filtration} We characterize symmetric $G$-invariant $p$-linear forms for  tilting modules and Weyl modules via cohomological 
vanishing. 

\begin{prop} \label{TV} 
Let $G$ be a simple split algebraic group and let $V$ be a finite-dimensional $G$-module such that $V^{*}$ admits a good filtration. Assume further that 
if $p=2$ and $G$ is of type $C$ then $[V^{*}:H^{0}(\omega_{1})]=0$. Then every $G$-invariant characteristic symmetric $p$-linear form on $V$ is the polarization of a $G$-invariant 
homogeneous polynomial of degree $p$ on $V$. 
\end{prop}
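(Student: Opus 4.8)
The plan is to translate the statement into the exact-sequence language of Proposition \ref{symvsym.prop} and then deduce surjectivity of the multilinearization map $\varphi$ from a cohomological vanishing statement supplied by Corollary \ref{symvsym}\eqref{symvsym.surj}. Recall that a $G$-invariant characteristic symmetric $p$-linear form on $V$ is precisely an element of $S''_p(V^*)^G$ (by Corollary \ref{symvsym.char}), while a $G$-invariant homogeneous polynomial of degree $p$ is an element of $S^p(V^*)^G = (\Quad V)^G$-type object, and polarization is exactly the map $\varphi$. So the statement is literally the surjectivity of $\varphi \!: S^p(V^*)^G \to S''_p(V^*)^G$, and Corollary \ref{symvsym}\eqref{symvsym.surj} tells us this holds as soon as $\H^1(G, V^{*[1]}) = 0$. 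Thus the whole proof reduces to establishing that vanishing.

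First I would set up this reduction cleanly, citing Corollaries \ref{symvsym.char} and \ref{symvsym} to reduce to showing $\H^1(G, V^{*[1]}) = 0$. Next I would exploit the hypothesis that $V^*$ has a good filtration. Since $V^*$ is filtered by induced modules $H^0(\mu)$, the Frobenius twist $V^{*[1]}$ is filtered by the twists $H^0(\mu)^{[1]}$; because $\H^1(G,-)$ is not exact one cannot immediately conclude, but a standard dévissage reduces the vanishing of $\H^1(G, V^{*[1]})$ to the vanishing of $\H^1(G, H^0(\mu)^{[1]})$ for each good-filtration factor $H^0(\mu)$ of $V^*$ (here one uses that the long exact cohomology sequences paste together, so it suffices to kill $\H^1$ on each layer). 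So the crux becomes: for which dominant $\mu$ does $\H^1(G, H^0(\mu)^{[1]}) \ne 0$?

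The heart of the argument is therefore a computation of $\H^1(G, H^0(\mu)^{[1]})$ for $G$ simple. This is where the special hypotheses in the statement come from — the exclusion of type $C$ with $p=2$ and $[V^*:H^0(\omega_1)] \ne 0$. I expect this to rest on known results on $\H^1$ of twisted induced modules (for instance via the relationship between $\H^1(G, H^0(\mu)^{[1]})$ and $\H^1(G_1, k)^{\mu}$-type data coming from the Lyndon--Hochschild--Serre spectral sequence for the Frobenius kernel $G_1 \trianglelefteq G$, with quotient $G/G_1 \cong G^{[1]}$, reducing to first cohomology of the first Frobenius kernel). The point is that $\H^1(G, H^0(\mu)^{[1]})$ vanishes for all dominant $\mu$ except in a short, classified list of exceptional $(G,p,\mu)$ — and in the non-type-$C$, $p \ne 2$ regime, or in type $C$, $p=2$ provided $\mu \ne \omega_1$, it always vanishes. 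The hypothesis $[V^*:H^0(\omega_1)]=0$ is exactly what removes the one surviving exceptional factor $H^0(\omega_1)$ in the problematic type-$C$, $p=2$ case.

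The main obstacle will be the cohomological computation of $\H^1(G, H^0(\mu)^{[1]})$ and assembling the exceptional list precisely enough to match the stated hypotheses; this is where I would lean most heavily on the literature (results of the type computing $\H^1(G, H^0(\mu)^{[1]})$ or equivalently $\H^1(G_1,k)$ as a $G$-module, à la Jantzen or Andersen--Jantzen--Soergel), rather than attempting it from scratch. The dévissage over the good filtration and the initial reduction via Corollary \ref{symvsym} are routine; the genuine content — and the reason the hypotheses are shaped the way they are — lies entirely in identifying the finitely many twisted factors that carry nontrivial $\H^1$ and checking that the stated exclusions exactly eliminate them.
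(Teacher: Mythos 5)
Your proposal is correct and takes essentially the same route as the paper: both reduce via Corollaries \ref{symvsym.char} and \ref{symvsym}\eqref{symvsym.surj} to showing $\H^1(G, V^{*[1]}) = 0$, and both settle this with the Lyndon--Hochschild--Serre spectral sequence for the first Frobenius kernel $G_1$ together with the known computation (Bendel--Nakano--Pillen) that $\Ext^1_{G_1}(k,k)^{(-1)}$ is $H^0(\omega_1)$ in type $C$ with $p=2$ and zero otherwise, which is exactly the source of the stated hypotheses. The only difference is organizational: the paper applies the spectral sequence to $V^{*[1]}$ as a whole, using the good filtration to kill $E_2^{i,0}$ and identifying the exceptional term as $\Hom_G(V(\omega_1), V^*)$ of dimension $[V^*:H^0(\omega_1)]$, whereas you d\'evisse over the good-filtration layers first and treat each $H^0(\mu)^{[1]}$ separately --- both versions isolate the same lone exceptional factor $\mu = \omega_1$.
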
 

\begin{proof} By Corollary \ref{symvsym}\eqref{symvsym.surj}, it suffices to prove that $E_{1}:=\H^{1}(G,(V^{*})^{[1]})=0$. Apply the Lyndon-Hochschild-Serre spectral 
sequence: 
\begin{equation}
E_{2}^{i,j}=\H^{i}(G/G_{1},\H^{j}(G_{1},k)\otimes (V^{*})^{[1]})\Rightarrow \H^{i+j}(G,(V^{*})^{[1]}).
\end{equation}
with the five term exact sequence $0\rightarrow E_{2}^{1,0}\rightarrow E_{1} \rightarrow E_{2}^{0,1} \rightarrow  E_{2}^{2,0}\rightarrow E_{2}$. 

Since $V^{*}$ admits a good filtration, it follows that $E_{2}^{i,0}=0$ for $i\geq 1$. Therefore, 
\begin{align*}
\H^{1}(G,(V^{*})^{[1]})&\cong \Hom_{G/G_{1}}(k,\Ext^{1}_{G_{1}}(k,k)\otimes (V^{*})^{[1]})\\
&\cong \Hom_{G}(k,\Ext^{1}_{G_{1}}(k,k)^{(-1)}\otimes V^{*}).
\end{align*}
Now by \cite[Theorem 3.1(C)(f)]{BendelNakanoPillen},  
$$\Ext^{1}_{G_{1}}(k,k)^{(-1)}\cong 
\begin{cases} 
H^{0}(\omega_{1}) & \text{$p=2$ and $\roots=C_{n}$} \\
0                             & \text{else},
\end{cases} 
$$
hence the claim holds apart from the exceptional case.  In the exceptional case,
\[
(H^0(\omega_1) \ot V^*)^G = \Hom_G(V, H^0(\omega_1)) = \Hom_G(V(\omega_1), V^*),
\]
whose dimension equals $[V^* : H^0(\omega_1)]$, so again the claim follows.
\end{proof} 

\subsection{Tilting modules}
Taking $p = 2$ in the proposition and specializing to tilting modules gives:

\begin{cor} \label{TV.p}
Let $G$ be a simple split algebraic group over a field $k$ of characteristic $2$, and let $\lambda\in X(T)_{+}$.  Assume further, in the case that $G$ has type $C$, that $[T(\la)^* : H^0(\omega_1)] = 0$.  Then
every $G$-invariant alternating bilinear form on $T(\la)$ is the polarization of a $G$-invariant quadratic form on $V$.
\end{cor}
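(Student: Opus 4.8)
The plan is to deduce this as the case $p = 2$, $V = T(\la)$ of Proposition \ref{TV}. First I would check that the hypotheses of that proposition are met. The essential requirement is that $V^{*} = T(\la)^{*}$ admit a good filtration; but by the Note following the list of notation we have $T(\la)^{*} = T(-w_0\la)$, which is again an indecomposable tilting module and hence has a good filtration by the very definition of tilting module. The auxiliary type-$C$ hypothesis of Proposition \ref{TV}, namely $[V^{*} : H^0(\omega_1)] = 0$, is exactly the condition $[T(\la)^{*} : H^0(\omega_1)] = 0$ that we have assumed.

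With the hypotheses verified, Proposition \ref{TV} tells us that every $G$-invariant characteristic symmetric $2$-linear form on $T(\la)$ is the polarization of a $G$-invariant homogeneous polynomial of degree $2$. It then remains only to rephrase this in the language of bilinear and quadratic forms. By the definition preceding Corollary \ref{symvsym.char}, a characteristic symmetric $2$-linear form is precisely an alternating bilinear form when $p = 2$. A $G$-invariant homogeneous polynomial of degree $2$ on $T(\la)$ is an element of $S^2(T(\la)^{*}) = \Quad T(\la)$, that is, a $G$-invariant quadratic form, and its polarization is precisely its polar bilinear form. Thus the conclusion of Proposition \ref{TV}, read in this special case, is exactly the assertion that every $G$-invariant alternating bilinear form on $T(\la)$ is the polar form of a $G$-invariant quadratic form.

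I expect no genuine obstacle, since the corollary is a direct specialization. The only points requiring attention are confirming that the dual tilting module carries a good filtration and correctly matching the characteristic-$2$ dictionary: between characteristic symmetric $2$-linear forms and alternating bilinear forms, and between degree-$2$ homogeneous polynomials and quadratic forms.
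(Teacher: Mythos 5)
Your proposal is correct and matches the paper's proof: the paper likewise observes that $T(\la)^* = T(-w_0\la)$ has a good filtration and then applies Proposition \ref{TV} with $p=2$. Your additional dictionary between characteristic symmetric $2$-linear forms and alternating bilinear forms, and between degree-$2$ homogeneous polynomials and quadratic forms, is exactly what the paper leaves implicit.
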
 

\begin{proof}
$T(\la)^* = T(-w_0 \la)$ has a good filtration, so Proposition \ref{TV} yields the result.
\end{proof}

\section{Exterior Powers of alternating forms} \label{C.sec}


\subsection{}
In this section, $b$ denotes a nondegenerate alternating form on a vector space $V$ of finite dimension $2n$ over a field $k$. We analyze quadratic forms on $\Lambda^r(V)$ induced by $b$ for various $r$.  This is related to quadratic forms on the fundamental representations of the group $\Sp(V,b)$ of type $C$.

It is well known that there is a bilinear form $\br$ on $\Lambda^r (V)$ defined by
\begin{equation} \label{br.def}
\br(x_1 \wedge \cdots \wedge x_r, y_1 \wedge \cdots \wedge y_r) := \det(b(x_i, y_j)_{1 \le i, j \le r}),
\end{equation}
and that $\br$ is non-degenerate since $b$ is \cite[Prop.~IX.1.9.10]{Bou:alg9}.
If $k$ has characteristic different from 2, then evidently $\br$ is symmetric for even $r$ and skew-symmetric for odd $r$.  If $k$ has characteristic 2, we consider quadratic forms, for which we have the following result which seems to be new.

\begin{prop} \label{wedge.odd} Let $\car k = 2$ and $r$ be odd. For a fixed symplectic basis $\B$ of $V$, there is a quadratic form $\qr$ on $\Lambda^r (V)$ with polar bilinear form $\br$ such that $\qr(v_1 \wedge \cdots \wedge v_r) = 0$ for $v_1, \ldots, v_r \in \B$.
\end{prop}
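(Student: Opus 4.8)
The plan is to build $\qr$ directly by prescribing its values on a basis and invoking the standard correspondence, in characteristic $2$, between quadratic forms and alternating bilinear forms together with a choice of diagonal values. Fix the total order on $\B$ that indexes the standard basis $\{w_I\}$ of $\Lambda^r(V)$, where each $w_I = v_{i_1} \wedge \cdots \wedge v_{i_r}$ is an increasing wedge of elements of $\B$. I would define
\[
\qr\Bigl(\textstyle\sum_I c_I w_I\Bigr) := \sum_{I < J} c_I c_J \, \br(w_I, w_J),
\]
i.e.\ the recipe that sets $\qr(w_I) = 0$ and records the off-diagonal Gram entries of $\br$ in the ``upper triangle''. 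A direct expansion of $\qr(u+u') - \qr(u) - \qr(u')$ shows that the polar form of this $\qr$ is the symmetric bilinear form whose matrix agrees with $\br$ off the diagonal and is zero on the diagonal; the diagonal terms $c_I^2\,\qr(w_I)$ drop out because they are additive. Hence the construction produces a quadratic form with polar form exactly $\br$ --- and vanishing on every wedge of basis vectors by design --- provided $\br$ is itself alternating, which is the one nontrivial point.

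The main step is therefore to verify that $\br$ is alternating when $r$ is odd. In characteristic $2$ a symmetric bilinear form is alternating as soon as it vanishes on each vector of a basis, since $\br(\sum c_I w_I, \sum c_I w_I) = \sum_I c_I^2\, \br(w_I, w_I)$ (the cross terms cancel in pairs). So it suffices to check $\br(w_I, w_I) = 0$ for each basis wedge. By \eqref{br.def} this value is $\det\bigl(b(v_{i_a}, v_{i_c})\bigr)_{1 \le a, c \le r}$, the determinant of an $r \times r$ alternating matrix: its diagonal vanishes because $b$ is alternating, and it is skew-symmetric.

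The hard part is that an odd-size alternating matrix is singular, a fact one cannot extract here from the usual $\det M = (-1)^r \det M$ argument, since that identity is vacuous in characteristic $2$. I would instead lift to the generic alternating matrix $M = (t_{ac})$ with independent indeterminate entries over $\Z$: its determinant is a fixed polynomial in $\Z[t_{ac}]$, and over $\Q$ the sign argument forces $\det M = 0$ for odd $r$; hence this polynomial is identically zero and specializes to $0$ in any field, in particular in $k$. (Equivalently, one invokes the Pfaffian.) This yields $\br(w_I, w_I) = 0$ for all $I$, completes the verification that $\br$ is alternating, and thereby validates the construction of $\qr$. Finally I would observe that any wedge $v_1 \wedge \cdots \wedge v_r$ of elements of $\B$ either has a repeated factor, and so equals $0$, or equals some $w_I$ up to sign, and signs are irrelevant in characteristic $2$; so $\qr$ vanishes on all such wedges, as required.
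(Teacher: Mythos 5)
Your proof is correct, and it arrives at the same quadratic form as the paper by a genuinely different route. The paper lifts the whole situation to $\Z$: it redefines $b$ by the same Gram matrix as a \emph{symmetric} form $\bz$ on a free $\Z$-module $\Vz$, considers $f(x) := \bzr(x,x)$ on $\Lambda^r(\Vz)$, shows $f$ takes only even values --- using the paired structure of the symplectic basis: a nonzero $\bzr(x,x)$ on a basis symbol would force every $e_i$ appearing in $x$ to be matched by the corresponding $f_i$ and vice versa, impossible when $r$ is odd --- and then sets $\qr := (\tfrac12 f) \ot k$. In coordinates that $\tfrac12 f$ is exactly your $\sum_{I<J} c_I c_J \, \br(w_I, w_J)$, because the diagonal values $\bzr(w_I,w_I)$ vanish identically over $\Z$ by the same matching argument; so the two constructions produce literally the same form. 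You instead stay entirely in characteristic $2$, invoking the standard dictionary (quadratic form $=$ alternating polar form plus prescribed diagonal values) and reducing everything to the single substantive point that $\br(w_I,w_I)=0$ for each basis wedge; your proof of that fact --- the determinant of the generic odd-size skew-symmetric matrix is the zero polynomial in $\Z[t_{ac}]$, hence specializes to $0$ over $k$, correctly flagging that the naive $\det M = (-1)^r\det M$ argument is vacuous in characteristic $2$ --- is sound, and your polar-form computation and the final remark about repeated factors and signs are both fine. The trade-off: the paper's version exhibits $\qr$ as the reduction of an integral quadratic form, consonant with the integral-model viewpoint of Section \ref{integral} and with the later use of $\bzr$; your version isolates the minimal linear-algebra input and is in fact slightly more general, since it never uses the $e_i, f_i$ pairing --- it yields such a $\qr$ for an \emph{arbitrary} ordered basis of $V$, whereas the paper's evenness argument genuinely leans on $\B$ being symplectic.
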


\begin{proof}
As $\car k = 2$, we may write out $\B$ as $e_1, \ldots, e_n, f_1, \ldots, f_n$ such that 
\begin{equation} \label{B.def}
\text{$b(e_i, e_j) = b(f_i, f_j) = 0$ for all $i, j$ and $b(e_i, f_j) = \delta_{ij}$}.
\end{equation}

Write $\Vz$ for a free $\Z$-module of rank $2n$ whose basis we denote also by $\B$ by abuse of notation; we may equally define a \emph{symmetric} bilinear form $\bz$ on 
$\Vz$ by \eqref{B.def} so that we may identify $b$ with $\bz \ot k$.  Then $\bzr$ is a symmetric bilinear form on $\Lambda^r (\Vz)$ and the map $f \!: x \mapsto \bzr(x,x)$ is a quadratic form on $\Lambda^r (\Vz)$.

We claim that $f$ always takes even values.  As $\Lambda^r (\Vz)$ is generated as an abelian group by symbols whose entries are taken from the symplectic basis, it suffices to verify that $f(x)$ is even when $x$ is such a symbol.  
But $\bzr(x,x)$ can only be nonzero for such an $x$ if for every $e_i$ in $x$ there is also a corresponding $f_i$ and vice versa.  As $r$ is odd, this is impossible and the claim is proved.

As $f$ is a homogeneous polynomial of degree 2 (with integer coefficients) in a  basis dual to $\B$, it follows that $f$ is divisible by 2.  The desired quadratic form on 
$\Lambda^r (V) = \Lambda^r (\Vz) \ot k$ is then $\qr := (\frac12 f) \ot k$.  By construction, $\qr$ has polar bilinear form $\bzr \ot k = \br$, and by the preceding paragraph $\qr$ vanishes on symbols with entries from $\B$ as desired.
\end{proof}

\begin{remark} \label{wedge.even}
In the case where $r$ is even and $2 \le r \le \dim V$, the bilinear form $\br$ is symmetric, but it is not alternating because for $s = r/2$ and $
x = e_1 \wedge f_1 \wedge e_2 \wedge f_2 \wedge \cdots \wedge e_s \wedge f_s$ we have $\br(x,x) = \pm 1$.  Therefore $x \mapsto \br(x,x)$ is a nonzero quadratic form on $\Lambda^r(V)$ which is obviously invariant under $\Sp(V, b)$.
\end{remark}

\begin{remark}
For quadratic forms over fields of characteristic $\ne 2$, the exterior powers $\Lambda^r q$ of a nondegenerate quadratic form $q$ are important invariants of $q$ and indeed form a basis for the Witt invariants of the quadratic forms of dimension $\dim q$, see \cite[p.~66]{GMS}.  However, for fields of characteristic 2, the preceding remark shows that an $\SO(V,q)$-invariant quadratic form on $\Lambda^r(V)$  for $r$ even depends only on the alternating form $b_q$ and therefore fails to capture important information about $q$.
\end{remark}

\subsection{Fundamental Weyl modules for type $C$}
As in \cite{Bou:g4}, we write $\omega_r$ for the fundamental dominant weight such that $\qform{\omega_r, \alpha_j} = \delta_{rj}$, where $\alpha_j$ is the simple root numbered $j$ in the diagram
\begin{equation} \label{C.diag}
\begin{picture}(5,0.8)
    \put(0,.4){\line(1,0){2}} 
    \put(3,.4){\line(1,0){1}}
    \put(4,.44){\line(1,0){1}}
    \put(4,0.36){\line(1,0){1}}
 
    \put(0,0){\makebox(0,0.4)[b]{$1$}}
    \put(1,0){\makebox(0,0.4)[b]{$2$}}
    \put(2,0){\makebox(0,0.4)[b]{$3$}}
    \put(3,0){\makebox(0,0.4)[b]{$n-2$}}
    \put(4,0.6){\makebox(0,0.4)[b]{$n-1$}}
    \put(5, 0){\makebox(0,0.4)[b]{$n$}}

    \multiput(2.3,0.4)(0.2,0){3}{\circle*{0.01}}
    
    \put(4.4, 0.25){\makebox(0.2,0.3)[s]{$<$}}

    \multiput(0,0.4)(1,0){6}{\circle*{\darkrad}}
\end{picture}
\end{equation}
We can see explicitly which of the fundamental Weyl modules $V(\omega_r)$ of $\Sp(V,b)$ are orthogonal.  Some of these are easy: for $r$ even, $V(\omega_r, \C)$ is orthogonal, hence so is $V(\omega_r, k)$ for every $k$.  

\begin{eg} \label{Sp.taut}
$V(\omega_1)$ is the tautological representation $V$ of $\Sp(V,b)$, and $\Sp(V,b)$ acts transitively on the nonzero vectors.  It follows that the only $\Sp(V,b)$-invariant polynomials are constant, hence $V(\omega_1, k)$ is not orthogonal for any $k$.  
\end{eg}

For the remaining cases, where $r$ is odd and $3 \le r \le n$, we identify $V(\omega_r)$ with the subspace of $\Lambda^r(V)$ generated by symbols $v_1 \wedge \cdots \wedge v_r$ such that $v_1, \ldots, v_r$ generate a totally isotropic subspace of $V$ as in \cite[\S1]{GowKleshchev} or \cite{Bruyn:Sp}.  We call such symbols \emph{of generator type}. 

\begin{prop}  \label{Sp.odd}
If $\car k = 2$, $r$ is odd, and $3 \le r \le n$, the quadratic form $\qr$ defined in Lemma \ref{wedge.odd} restricts to be $\Sp(V,b)$-invariant and nonzero on the Weyl module $V(\omega_r)$.
\end{prop}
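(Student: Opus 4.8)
The plan is to reduce both assertions to understanding $\qr$ on the generator-type symbols, which by hypothesis span $V(\omega_r)$. For invariance, first observe that the polar bilinear form $b_{\qr}=\br$ is manifestly $\Sp(V,b)$-invariant, since $\br(gx,gy)=\det(b(gx_i,gy_j))=\det(b(x_i,y_j))=\br(x,y)$ for $g\in\Sp(V,b)$. Consequently, for each $g$ the quadratic form $d_g:=\qr\circ g-\qr$ has zero polar form; that is, it is additive, $d_g(x+y)=d_g(x)+d_g(y)$ and $d_g(\la x)=\la^2 d_g(x)$. Such a form is determined by its values on any spanning set, so to prove that $\qr$ restricts to an invariant form on $V(\omega_r)$ (equivalently $d_g\equiv 0$ there) it suffices to show that $d_g$ vanishes on every generator-type symbol. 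Since $g$ preserves $b$, it carries totally isotropic subspaces to totally isotropic subspaces, hence sends generator-type symbols to generator-type symbols; thus the required vanishing of $d_g$ follows at once from the key claim that $\qr$ \emph{itself} vanishes on every generator-type symbol.

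I expect this key claim to be the main obstacle, and the plan is to prove it by lifting to characteristic $0$. Let $x=v_1\wedge\cdots\wedge v_r$ with $v_1,\dots,v_r$ spanning a totally isotropic subspace; we may assume the $v_i$ are linearly independent, since otherwise $x=0$. Working over the Witt vectors $R=W(\kalg)$ (so that $R/2R=\kalg$, and it suffices to argue over $\kalg$ as the value $\qr(x)$ is unchanged under $k\hookrightarrow\kalg$), lift each $v_i$ to $\tilde v_i\in\Vz\ot R$. By the construction of $\qr$ in Proposition~\ref{wedge.odd}, $\qr(x)$ is the reduction modulo $2$ of $\tfrac12\,\bzr(\tilde x,\tilde x)=\tfrac12\det\big(\bz(\tilde v_i,\tilde v_j)\big)_{1\le i,j\le r}$, where $\tilde x=\tilde v_1\wedge\cdots\wedge\tilde v_r$. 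The Gram matrix $\big(\bz(\tilde v_i,\tilde v_j)\big)$ reduces modulo $2$ to $\big(b(v_i,v_j)\big)=0$, the off-diagonal entries vanishing by total isotropy and the diagonal entries because $b$ is alternating, so every entry lies in $2R$. Hence its determinant lies in $2^rR$, and therefore $\tfrac12\det(\cdots)$ lies in $2^{r-1}R\subseteq 2R$ because $r\ge 2$. Reducing modulo $2$ gives $\qr(x)=0$, as claimed. This is precisely where the hypothesis $r\ge 3$, and in particular $r\ne 1$, is used: for $r=1$ the extra factor of $2$ is unavailable, and indeed $\qr$ is not $\Sp(V,b)$-invariant on $V(\omega_1)=V$.

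Finally, for nonvanishing I would exhibit an explicit element on which $\qr$ does not vanish. Since $r\le n$, both $e_1\wedge\cdots\wedge e_r$ and $f_1\wedge\cdots\wedge f_r$ are generator-type symbols, so $x_0:=e_1\wedge\cdots\wedge e_r+f_1\wedge\cdots\wedge f_r$ lies in $V(\omega_r)$. Lifting to $\Vz$ and using \eqref{B.def}, one computes $\bzr(x_0,x_0)=\bzr(e_1\wedge\cdots\wedge e_r,\,e_1\wedge\cdots\wedge e_r)+2\,\bzr(e_1\wedge\cdots\wedge e_r,\,f_1\wedge\cdots\wedge f_r)+\bzr(f_1\wedge\cdots\wedge f_r,\,f_1\wedge\cdots\wedge f_r)=0+2\det(I_r)+0=2$, whence $\qr(x_0)=1\ne 0$. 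Combining this computation with the invariance established above (which applies verbatim over every extension field, hence gives invariance as a form on the $G$-module $V(\omega_r)$) completes the proof.
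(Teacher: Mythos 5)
Your proposal is correct, and while it shares the paper's overall skeleton---everything is reduced to the key claim that $\qr$ vanishes on every generator-type symbol, after which invariance on $V(\omega_r)$ follows from the characteristic-$2$ polarization identity together with the manifest $\Sp(V,b)$-invariance of $\br$---your proof of that key claim takes a genuinely different route. The paper argues group-theoretically: starting from the vanishing of $\qr$ on symbols with entries in $\B$ (built into Proposition \ref{wedge.odd}), it computes $\qr(gx)$ for $g$ in a root subgroup by splitting $V = U \perp U'$ and noting that the Gram determinant \eqref{br.def} acquires a zero block because $r-t \ge 1$, handles torus elements by scaling, and then passes to arbitrary generator-type symbols via the group action. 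You instead lift an arbitrary generator-type symbol to $\Vz \ot W(\kalg)$ and read off $\qr(x)$ as $\tfrac12\det(\mathrm{Gram}) \bmod 2$; since total isotropy puts every Gram entry in $2R$, the determinant lies in $2^{r}R$ and the halved value in $2^{r-1}R \subseteq 2R$. This valuation argument treats all generator-type symbols uniformly at once, isolates exactly where the hypothesis on $r$ enters (you need only $r \ge 2$, whereas the paper's root-subgroup computation needs $r \ge 3$ since $t$ can equal $2$), and sidesteps the paper's rather terse passage from ``root elements and torus elements'' to all $g \in \Sp(V,b)$, which implicitly uses that $\Sp$ is generated by these subgroups and acts transitively on totally isotropic $r$-dimensional subspaces; the cost is the Witt-vector lift, though that is in the same integral spirit as the construction of $\qr$ in Proposition \ref{wedge.odd} itself. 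Your additive-$d_g$ packaging of invariance is substantively the same as the paper's direct expansion $\qr(gz) = \sum_i \qr(gz_i) + \sum_{i<j} \br(gz_i, gz_j)$ (and your remark that the argument works over every extension field is what one needs for invariance under the group scheme, $\Sp$ being smooth), while your explicit witness $x_0 = e_1 \wedge \cdots \wedge e_r + f_1 \wedge \cdots \wedge f_r$ with $\qr(x_0)=1$ makes the ``nonzero'' assertion explicit, a point the paper leaves implicit (its restriction has nonzero polar form since $\br$ pairs the two standard symbols to $1$).
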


\begin{proof}
Let $x$ be a symbol of generator type, where each entry in the symbol belongs to $\B$.  The basis $\B$ defines a pinning of $\Sp(V,b)$ as in \cite[\S{VIII.13.3}]{Bou:g7} 
or \cite[23.3]{Borel}.  We take $g \in \Sp(V,b)$ to be an element of a root subgroup relative to the pinning, so we can decompose $V$ as an orthogonal sum $V = U \perp U'$ relative to $b$ where $g(U) = U$, $g$ is the identity on $U'$, and $U$ consists of $s = 1$ or 2 of the hyperbolic planes defined by $\B$.  We write $x = y \wedge y'$ where $y \in \Lambda^t(U)$ and $y' \in \Lambda^{r - t}(U')$ are symbols of generator type and $t \le s$ because $x$ has generator type.  Writing $gy = \sum y_i$ where the $y_i$ are symbols in $\Lambda^t(U)$ with entries from $\B$, we find 
$$\qr(gx) = \qr(\sum y_i \wedge y') = \sum_{i<j} \br(y_i \wedge y', y_j \wedge y').$$  As $r \ge 3$, $r - t \ge 1$, and \eqref{br.def} shows that $\br(y_i \wedge y', y_j \wedge y') = 0$.  That is, $q(gx) = 0$.  Furthermore, if $g$ is instead taken to be in the maximal torus of $\Sp(V,b)$ defined by the pinning, then it scales $x$ and again $q(gx) = 0$.  It follows from these two calculations that $q(gx) = 0$ for all $g \in \Sp(V,b)$, hence $\qr$ vanishes on symbols of generator type, regardless of whether their entries are drawn from $\B$.

Any element $z$ of $V(\omega_r)$ can be written as $z = \sum z_i$ for $z_i$ symbols of generator type.  For any $g \in \Sp(V,b)$, we have 
$$\qr(gz) = \sum_i \qr(gz_i) + \sum_{i<j} \br(gz_i, gz_j).$$  As $gz_i$ also has generator type, $\qr(gz_i) = 0$.  Furthermore, $\br$ is canonically determined by $b$ and so is $\Sp(V,b)$-invariant, so it follows that $\qr(gz) = \sum_{i<j} \br(z_i, z_j) = \qr(z)$ as desired.
 \end{proof}


\section{Which Weyl modules are orthogonal when $\car k = 2$?}  \label{weyl.sec}

The goal of this section is to prove Theorem \ref{orth}, which determines, in case $\car k = 2$, which Weyl modules have nonzero $G$-invariant quadratic forms.  In case $\car k \ne 2$, quadratic forms are equivalent to symmetric bilinear forms and the answer is given by the material in Section~\ref{bil.weyl} and the recipe described in \cite{GW2} or \cite{Bou:g7}.

\subsection{Alternating Forms} 
Composing linear maps defined in Section~\ref{symvsym.sec}, we obtain, for vector spaces $V_1, V_2$, a linear map,
\[ 
\varpi \!: (\ot^2 V_1) \otimes  (\ot^2 V_2) \xrightarrow{\Id \ot \alt} (\ot^2 V_1) \otimes (\ot^2 V_2) \iso \ot^2 (V_1 \ot V_2) \xrightarrow{\rho} S^2(V_1 \ot V_2),
\] 
where the middle isomorphism is $v_1 \ot v'_1 \ot v_2 \ot v'_2 \mapsto v_1 \ot v_2 \ot v'_1 \ot v'_2$.  Replacing the first map $\Id \ot \alt$ with $\alt \ot \Id$ does not change $\varpi$.
Put $e_i \!: \ot^2 V_i \twoheadrightarrow \Lambda^2(V_i)$ for the quotient map.  

\begin{lemma}[Tignol's product] \label{tignol}
The map $\varpi$ vanishes on $\ker (e_1 \ot e_2)$ and induces a linear map
\[
\Lambda^2(V_1) \ot  \Lambda^2(V_2) \to S^2(V_1 \ot V_2).
\]
\end{lemma}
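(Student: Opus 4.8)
The plan is to show that $\varpi$ annihilates $\ker(e_1 \ot e_2)$ and then to invoke the universal property of the cokernel: since $e_1 \ot e_2 \!: (\ot^2 V_1) \ot (\ot^2 V_2) \to \Lambda^2(V_1) \ot \Lambda^2(V_2)$ is surjective, any linear map out of the source that kills its kernel factors uniquely through the target. First I would pin down that kernel. By \eqref{bil.seq} each $e_i$ is surjective with $\ker e_i = S'_2(V_i)$, and for the tensor product of two surjections of vector spaces one has
\[
\ker(e_1 \ot e_2) = S'_2(V_1) \ot (\ot^2 V_2) \; + \; (\ot^2 V_1) \ot S'_2(V_2),
\]
which I would verify by choosing linear complements to the $\ker e_i$ and comparing the resulting decompositions of source and target. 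It then suffices to check that $\varpi$ vanishes on each of the two summands separately.

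The key observation is that the skew-symmetrization $\alt \!: \ot^2 V \to \ot^2 V$ annihilates the symmetric tensors $S'_2(V)$. Indeed, writing $\tau$ for the transposition in $\Sigma_2$, one has $\alt(x) = x - \tau x$ directly from the definition, and $\tau x = x$ for $x \in S'_2(V)$, so $\alt(x) = 0$; this holds in every characteristic. Reading off $\varpi$ from its defining factorization $\rho \circ (\text{middle iso}) \circ (\Id \ot \alt)$, the map $\Id \ot \alt$ already kills $(\ot^2 V_1) \ot S'_2(V_2)$, so $\varpi$ vanishes there.

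For the remaining summand I would invoke the observation noted just before the lemma, that $\varpi$ is unchanged when $\Id \ot \alt$ is replaced by $\alt \ot \Id$. Using the alternate factorization $\rho \circ (\text{middle iso}) \circ (\alt \ot \Id)$, the map $\alt \ot \Id$ kills $S'_2(V_1) \ot (\ot^2 V_2)$ by the same observation. Combining the two cases gives vanishing on all of $\ker(e_1 \ot e_2)$, and the induced map $\Lambda^2(V_1) \ot \Lambda^2(V_2) \to S^2(V_1 \ot V_2)$ follows.

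I do not expect a serious obstacle: the argument is formal once the kernel is described correctly. The only points needing care are writing $\ker(e_1 \ot e_2)$ as a \emph{sum} (not merely a tensor product) and exploiting the two factorizations of $\varpi$ to treat $V_1$ and $V_2$ symmetrically — which is precisely what the preceding remark provides. An alternative, more computational route evaluates $\varpi$ on a pure tensor $(v_1 \ot v_1') \ot (v_2 \ot v_2')$, yielding $(v_1 \ot v_2)(v_1' \ot v_2') - (v_1 \ot v_2')(v_1' \ot v_2)$ in $S^2(V_1 \ot V_2)$, and then observes that commutativity of $S^2$ forces this to vanish whenever either factor is symmetric; but the factorization argument is cleaner and basis-free.
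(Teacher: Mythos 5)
Your proposal is correct and takes essentially the same route as the paper's proof: identify $\ker(e_1 \ot e_2)$ as $\ker e_1 \ot (\ot^2 V_2) + (\ot^2 V_1) \ot \ker e_2$, observe that $\alt$ annihilates symmetric tensors, and use the two factorizations of $\varpi$ (via $\Id \ot \alt$ and $\alt \ot \Id$) to kill each summand. The only cosmetic difference is that the paper describes $\ker e_i$ as the span $\I''_2(V_i)$ of the elements $v \ot v$, which is the same subspace as your $S'_2(V_i)$ from \eqref{bil.seq}.
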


\begin{proof}
For $i = 1, 2$, the map $\alt$ vanishes on the subspace $\I''_2(V_i)$ of $\ot^2 V_i$ spanned by elements $v \ot v$ for $v \in V_i$, so $\varpi$ vanishes on $(\ot^2 V_1) \ot \I''_2(V_2) = \ker(\Id \ot e_2)$.  Because replacing the first map in the definition of $\varpi$ does not change $\varpi$, $\varpi$ also vanishes on $\I''_2(V_1) \ot (\ot^2 V_2)$,  and we conclude that $\varpi$ vanishes on the kernel of $e_1 \ot e_2$.
\end{proof}

The following proposition in the case $\car k = 2$ can be found in \cite[\S3]{SinWillems}.  Our proof invokes Tignol's product from Lemma \ref{tignol}.
\begin{prop} \label{tensor}
If $V_1$, $V_2$ are $k$-vector spaces with alternating bilinear forms $b_1, b_2$, then there is a unique quadratic form $q$ on $V_1 \ot V_2$ so that
\begin{equation} \label{tensor.1}
q(\sum_i v_{1i} \ot v_{2i}) = \sum_{i<j} b_1(v_{1i}, v_{1j}) \, b_2(v_{2i}, v_{2j}) \quad \text{for $v_{\ell i} \in V_\ell$.}
\end{equation}
If $b_1$ and $b_2$ are nondegenerate, then so is $q$.  If $b_i$ is invariant under a group $G_i$, then $q$ is invariant under $G_1 \times G_2$.
\end{prop}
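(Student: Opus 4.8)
The goal is to produce a quadratic form $q$ on $V_1 \otimes V_2$ satisfying \eqref{tensor.1}, show uniqueness, and verify the nondegeneracy and invariance claims. The key observation is that Lemma \ref{tignol} already hands us the essential construction: the Tignol product gives a canonical linear map $\Lambda^2(V_1) \otimes \Lambda^2(V_2) \to S^2(V_1 \otimes V_2)$. The plan is to feed the two alternating forms $b_1, b_2$ into this machine. I would first view each $b_i$ as an element $b_i \in \Lambda^2(V_i^*)$ (recalling from Definition \ref{altp.def} that $\Lambda^2(V_i^*)$ is precisely the space of alternating bilinear forms on $V_i$). Applying Tignol's product with $V_i$ replaced by $V_i^*$ then yields an element of $S^2(V_1^* \otimes V_2^*) = S^2((V_1 \otimes V_2)^*) = \Quad(V_1 \otimes V_2)$, which is our candidate quadratic form $q$.

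\textbf{Verifying the defining formula.}
Next I would check that this $q$ satisfies \eqref{tensor.1}. This is a direct computation: unwinding the definition of $\varpi$ applied to $b_1 \otimes b_2$, the composite $\Id \otimes \alt$ skew-symmetrizes in the second slot of each factor, the middle isomorphism interleaves the factors as $v_1 \otimes v_1' \otimes v_2 \otimes v_2' \mapsto v_1 \otimes v_2 \otimes v_1' \otimes v_2'$, and $\rho$ symmetrizes the resulting degree-2 expression in $V_1 \otimes V_2$. Evaluating the resulting polynomial on $\sum_i v_{1i} \otimes v_{2i}$ should reproduce exactly $\sum_{i<j} b_1(v_{1i}, v_{1j}) b_2(v_{2i}, v_{2j})$, with the antisymmetry of each $b_\ell$ ensuring the cross-terms pair up correctly and the $\Lambda^2$-passage killing the diagonal $i = j$ contributions. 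Uniqueness is then immediate: formula \eqref{tensor.1} prescribes the value of $q$ on every vector of $V_1 \otimes V_2$ (each such vector is a finite sum of simple tensors), and a quadratic form is determined by its values on all vectors.

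\textbf{Invariance and nondegeneracy.}
The invariance claim is the easiest part: the entire construction $(b_1, b_2) \mapsto q$ is built from canonical, functorial maps (Tignol's product and the identifications above), so if $g_i \in G_i$ preserves $b_i$, then $g_1 \times g_2$ preserves the output $q$; concretely, formula \eqref{tensor.1} is manifestly invariant under $(g_1, g_2)$ since each $b_\ell$ is. For nondegeneracy, I would compute the polar bilinear form $b_q$ of $q$. I expect $b_q$ to equal $b_1 \otimes b_2$ (the usual tensor product of bilinear forms) up to the natural identifications; indeed, applying Tignol's product in this way is essentially designed so that the polarization recovers $b_1 \otimes b_2$. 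Once $b_q = b_1 \otimes b_2$ is established, nondegeneracy of $q$ (in the sense that $\rrad q = 0$) follows because the tensor product of two nondegenerate bilinear forms is nondegenerate, and in characteristic $2$ the radical of $q$ is contained in the radical of $b_q$.

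\textbf{Main obstacle.}
The main technical obstacle is the bookkeeping in verifying \eqref{tensor.1} and, relatedly, in pinning down the polar form $b_q$. The interleaving isomorphism and the interplay between the skew-symmetrization $\alt$ in each factor and the final symmetrization $\rho$ require careful sign- and index-tracking, especially since we are ultimately working in characteristic $2$ where the distinction between $S'_2$, $S''_2$, and $\Lambda^2$ (see Definition \ref{altp.def}) is subtle. I would handle this by choosing symplectic-type bases and checking the formula on basis simple tensors, then invoking bilinearity; alternatively, one can verify it abstractly by tracing a general element $\sum_i v_{1i} \otimes v_{2i}$ through each arrow in the definition of $\varpi$. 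The reference to \cite[\S3]{SinWillems} for the $\car k = 2$ case suggests the formula is known, so the novelty here is the clean derivation via Tignol's product rather than any deep difficulty.
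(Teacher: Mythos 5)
Your proposal is correct and takes essentially the same route as the paper: the paper's proof likewise views each $b_i$ as an element of $\Lambda^2(V_i^*)$ and plugs $b_1 \ot b_2$ into Tignol's product (Lemma \ref{tignol}) to obtain $q \in S^2((V_1 \ot V_2)^*) = \Quad(V_1 \ot V_2)$. The verifications you sketch --- that this $q$ satisfies \eqref{tensor.1}, that uniqueness is immediate since every vector is a sum of simple tensors, that the polar form is $b_1 \ot b_2$ (giving nondegeneracy), and that invariance is manifest --- are exactly the routine steps the paper leaves implicit, and they all check out.
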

  
\begin{proof} The alternating form $b_i$ determines an element of $\Lambda^2(V^*_i)$.  Plugging $b_1 \ot b_2$ into Tignol's product gives an element $q \in S^2(V^*_1 \ot V^*_2) = S^2((V_1 \ot V_2)^*)$, i.e., with a quadratic form on $V_1 \ot V_2$.
\end{proof}

\begin{cor} \label{sum}
Let $\la, \mu \in X(T)_+$.
If  the Weyl modules $V(\la), V(\mu)$ have nonzero $G$-invariant alternating bilinear forms, then $V(\la + \mu)$ is orthogonal.
\end{cor}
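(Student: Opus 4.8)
The plan is to manufacture the desired quadratic form on $V(\la+\mu)$ by transporting, via a canonical map, the quadratic form that Tignol's product (Proposition~\ref{tensor}) builds on the tensor product $V(\la)\ot V(\mu)$. First I would record what the hypothesis gives: by the discussion opening Section~\ref{bil.weyl} together with Lemma~\ref{bil}, a nonzero $G$-invariant bilinear form on a Weyl module $V(\nu)$ exists only when $\nu=-w_0\nu$, so $\la=-w_0\la$ and $\mu=-w_0\mu$; write $b_\la,b_\mu$ for the given nonzero invariant alternating forms. Each $b_\nu$ is the pullback along the surjection $V(\nu)\onto L(\nu)$ of \eqref{triple} of a nondegenerate invariant form $\bar b_\nu$ on $L(\nu)$, so $\rrad b_\nu=\rrad V(\nu)$. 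Feeding $b_\la$ and $b_\mu$ into Proposition~\ref{tensor} produces a quadratic form $q$ on $V(\la)\ot V(\mu)$ invariant under $G\times G$, hence under the diagonal $G$; polarizing \eqref{tensor.1} shows its polar form is $b_q=b_\la\ot b_\mu$, i.e. $b_q(x_1\ot x_2,y_1\ot y_2)=b_\la(x_1,y_1)\,b_\mu(x_2,y_2)$.

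Next I would transport $q$ to $V(\la+\mu)$. Let $v_\la^+,v_\mu^+$ be highest weight vectors. Then $v_\la^+\ot v_\mu^+$ is a maximal vector of weight $\la+\mu$ spanning a one-dimensional weight space, so the submodule it generates is a highest weight module of highest weight $\la+\mu$ and is therefore a quotient of the Weyl module $V(\la+\mu)$; this furnishes a nonzero $G$-map $\iota\colon V(\la+\mu)\to V(\la)\ot V(\mu)$ sending $v_{\la+\mu}^+$ to a nonzero multiple of $v_\la^+\ot v_\mu^+$. Pulling $q$ back along $\iota$ gives a $G$-invariant quadratic form $\iota^{*}q$ on $V(\la+\mu)$, and it remains only to verify $\iota^{*}q\neq0$.

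The subtle point, and what I expect to be the main obstacle, is that $b_\la,b_\mu$ are \emph{degenerate} on the Weyl modules (radicals $\rrad V(\la),\rrad V(\mu)$), so $b_q$ is badly degenerate and I must pin the form down on vectors where it survives. I would evaluate the polar form $\iota^{*}b_q$ on the two extreme weight vectors $v_{\la+\mu}^+$ and $v_{\la+\mu}^-$ of $V(\la+\mu)$. Here $\iota(v_{\la+\mu}^+)\neq0$ because $v_{\la+\mu}^+$ generates $V(\la+\mu)$, while $\iota(v_{\la+\mu}^-)\neq0$ because $\ker\iota$ lies in the unique maximal submodule $\rrad V(\la+\mu)$, whereas $v_{\la+\mu}^-$ maps to the nonzero lowest weight vector of $L(\la+\mu)$ and so avoids $\rrad V(\la+\mu)$; by weight considerations these two images lie in the one-dimensional spaces $k(v_\la^+\ot v_\mu^+)$ and $k(v_\la^-\ot v_\mu^-)$, respectively. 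Consequently $\iota^{*}b_q(v_{\la+\mu}^+,v_{\la+\mu}^-)$ is a nonzero scalar times $b_\la(v_\la^+,v_\la^-)\,b_\mu(v_\mu^+,v_\mu^-)$.

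Finally I would show each factor is nonzero, which is exactly where the self-duality $\la=-w_0\la$ is used: since $v_\la^-$ has weight $w_0\la=-\la$, the nondegenerate $T$-invariant form $\bar b_\la$ restricts to a perfect pairing of the one-dimensional weight spaces $L(\la)_{\la}$ and $L(\la)_{-\la}$, so $b_\la(v_\la^+,v_\la^-)=\bar b_\la(\bar v_\la^+,\bar v_\la^-)\neq0$, and likewise for $\mu$. Hence $\iota^{*}b_q\neq0$, and since a quadratic form with nonzero polar form is itself nonzero, $\iota^{*}q\neq0$; thus $V(\la+\mu)$ is orthogonal. (The argument is characteristic-free: for $\car k\neq2$ one may instead simply restrict the symmetric form $b_\la\ot b_\mu$, the quadratic refinement from Proposition~\ref{tensor} being needed only in characteristic $2$.)
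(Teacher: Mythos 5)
Your proof is correct and follows essentially the same route as the paper: both construct the $G$-equivariant map $V(\la+\mu)\to V(\la)\ot V(\mu)$ from the extreme weight $\la+\mu$, pull back the quadratic form supplied by Proposition~\ref{tensor}, and verify nonvanishing via the extreme weight vectors, whose pairing is nonzero by Lemma~\ref{bil}. Your evaluation of the polar form on the pair $(v^+_{\la+\mu}, v^-_{\la+\mu})$ is the same computation as the paper's evaluation of $q$ on $x^+\ot y^+ + x^-\ot y^-$, since $q$ vanishes on pure tensors.
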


\begin{proof}
The tensor product $V(\la) \ot V(\mu)$ has $\la + \mu$ as an extreme weight, and so there is a nonzero $G$-equivariant map $\pi \!: V(\la + \mu) \to V(\la) \ot V(\mu)$.
The hypothesis on $V(\la)$ and $V(\mu)$ gives a $G$-invariant quadratic form $q$ on the tensor product by Proposition ~\ref{tensor}, hence $q\pi$ is a $G$-invariant quadratic form on $V(\la + \mu)$, and it suffices to check that $q \pi$ is nonzero.

Fix highest weight vectors $x^+, y^+$ and lowest weight vectors $x^-, y^-$ of $V(\la), V(\mu)$ respectively; note that the bilinear forms are nonzero on the pairs $(x^+, x^-)$ and $(y^+, y^-)$ as in Lemma \ref{bil}.  The sum of the highest and lowest weight spaces of $V(\la + \mu)$ is identified via $\pi$ with $k(x^+ \ot y^+) + k(x^- \ot y^-)$, and $q(x^+ \ot y^+ + x^- \ot y^-)$ is nonzero by \eqref{tensor.1}.
\end{proof}


\subsection{} We continue by providing below some examples for symplectic groups. 

\begin{eg} \label{Sp.bad}
Let $G$ be a quotient of $T_0 \times \prod_i G_i$ as in \ref{red.gen} such that $G_j \cong \Sp_{2n}$ for some $n \ge 1$ and some $j$, and $\la \in X(T)_+$ such that $\la$ restricts to be zero on $T_0$ and $G_i$ for $i \ne j$, but on $G_j$ it is $\omega_1$.  Then by Lemma \ref{red.tensor}, the restriction of $V(\la)$ to $G_j$ is the tautological representation as in Example \ref{Sp.taut} and the restriction to $T_0$ and to $G_i$ for $i \ne j$ is a trivial representation.  Therefore, $k[V(\la)]^G = k[V(\omega_1)]^{\Sp_{2n}} = k$ and $V(\la)$ is symplectic (i.e., $\Lambda^2(V(\la)^*)^G \ne 0$) and not orthogonal (i.e., $(\Quad V(\la))^G = 0$).
\end{eg}

\begin{theorem} \label{orth}
Let $G$ be a split reductive group over a field $k$ of characteristic 2 and let $\la \in X(T)_+$ be nonzero.
Then exactly one of the following holds:
\begin{enumerate}
\item \label{orth.orth} $V(\la)$ is orthogonal (i.e., $(\Quad V(\la))^G \ne 0$), or
\item \label{orth.bad} $G$ and $V(\la)$ are as in Example \ref{Sp.bad}, or
\item \label{orth.ne} $\la \ne -w_0 \la$.
\end{enumerate}
Furthermore, if \eqref{orth.orth} occurs, then $(\Quad V(\la))^G = kq$ for a quadratic from $q$ with $\rrad b_q = \rrad V(\la)$ and  one of the following holds:
\begin{enumerate}
\renewcommand{\theenumi}{\roman{enumi}}
\item \label{N.caseZ}  $\rrad q = \rrad V(\la)$, $(\Quad L(\la))^G$ is 1-dimensional and spanned by the quadratic form $\qb$ induced by $q$, and $\rrad b_{\qb} = 0$; or
\item \label{N.caseN} $(\Quad L(\la))^G = 0$, and $\rrad q$ is a codimension-1 $G$-submodule of $\rrad V(\la)$.
\end{enumerate}

\end{theorem}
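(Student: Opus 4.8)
The plan is to read the whole statement off the $G$-cohomology sequence attached to \eqref{symvsym.seq} for $p=2$ (with $V$ replaced by $V^*$), as in the proof of Corollary \ref{symvsym}:
\[
0 \to (V(\la)^{*[1]})^G \to (\Quad V(\la))^G \xrightarrow{\varphi} (\Lambda^2 V(\la)^*)^G \xrightarrow{\delta} \H^1(G, V(\la)^{*[1]}),
\]
where $\varphi$ is polarization $q \mapsto b_q$. For $\la \ne 0$ the head of $V(\la)$ is $L(\la) \ne k$, so $V(\la)$ has no codimension-$1$ submodule and $\varphi$ is injective by Corollary \ref{symvsym}\eqref{symvsym.inj}; thus $(\Quad V(\la))^G$ embeds into $(\Lambda^2 V(\la)^*)^G \subseteq (\Bil V(\la))^G$, which by Lemma \ref{bil} equals $kb$ when $\la = -w_0\la$ and $0$ otherwise. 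In the former case $b$ is symmetric by Lemma \ref{char2.sym}, and in fact alternating: $\psi(b) \in (\Quad V(\la))^G$ satisfies $\varphi\psi(b) = 2b = 0$, so injectivity of $\varphi$ gives $\psi(b) = 0$; hence $(\Lambda^2 V(\la)^*)^G = kb$. Consequently $V(\la)$ is orthogonal precisely when $b$ lies in the image of $\varphi$, equivalently $\delta(b) = 0$.

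For the trichotomy I would reduce, via Section \ref{red.gen} and Lemma \ref{red.tensor}, to $G = T_0 \times \prod_i G_i$ with $V(\la) = \bigotimes_i V(\la_i)$; the condition $\la = -w_0\la$ kills the torus component, and each nonzero $\la_i$ satisfies $\la_i = -w_0\la_i$, so $V(\la_i)$ is symplectic by the first paragraph. If two or more $\la_i$ are nonzero, grouping them writes $\la = \mu_1 + \mu_2$ with $\mu_1 = -w_0\mu_1$ and $\mu_2 = -w_0\mu_2$ nonzero, so $V(\mu_1), V(\mu_2)$ are symplectic and Corollary \ref{sum} gives \eqref{orth.orth}. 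If exactly one $\la_i$ is nonzero we are reduced to $G$ simple with $\la = -w_0\la \ne 0$: here $V(\la)^* = H^0(\la)$ has a good filtration and $[H^0(\la):H^0(\omega_1)] = 0$ unless $\la = \omega_1$, so Proposition \ref{TV} exhibits $b$ as $\varphi(q) = b_q$ for an invariant quadratic form $q$, giving \eqref{orth.orth}, except when $G$ has type $C$ and $\la = \omega_1$ --- which is Example \ref{Sp.bad}, hence \eqref{orth.bad}, and is not orthogonal by Example \ref{Sp.taut}. Finally $\la \ne -w_0\la$ is \eqref{orth.ne}, where $(\Lambda^2 V(\la)^*)^G = 0$ forces $(\Quad V(\la))^G = 0$. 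Exclusivity is then immediate from the first paragraph: \eqref{orth.orth} forces $b_q \ne 0$ and hence $\la = -w_0\la$, ruling out \eqref{orth.ne}; \eqref{orth.bad} has $\la = \omega_1 = -w_0\la$ but is non-orthogonal; and \eqref{orth.ne} excludes the other two.

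Granting \eqref{orth.orth} (so $\la = -w_0\la \ne 0$), injectivity of $\varphi$ and $(\Lambda^2 V(\la)^*)^G = kb$ give $(\Quad V(\la))^G = kq$ with $b_q$ a nonzero multiple of $b$; as $b$ is pulled back from the nondegenerate form on $L(\la)$ along $V(\la) \onto L(\la)$, its radical is $\rrad V(\la)$, so $\rrad b_q = \rrad V(\la)$. On $\rrad b_q$ the form $q$ is additive with $q(cv) = c^2 q(v)$, so the $G$-invariant restriction $q|_{\rrad V(\la)}$ is Frobenius-semilinear and its zero locus $\rrad q$ is a $G$-submodule of $\rrad V(\la)$ of codimension $0$ or $1$. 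I would then split on whether $q$ vanishes on $\rrad V(\la)$. If it does, $\rrad q = \rrad V(\la)$ and $q$ descends to a form $\qb$ on $L(\la)$ whose polar form, the descent of $b_q$, is nondegenerate (so $\rrad b_{\qb} = 0$), and $(\Quad L(\la))^G = k\qb$ is one-dimensional by the same injectivity applied to the self-dual irreducible $L(\la)$: this is \eqref{N.caseZ}. If instead $q|_{\rrad V(\la)} \ne 0$, then $\rrad q$ has codimension $1$, and $(\Quad L(\la))^G = 0$, since any invariant form on $L(\la)$ pulls back to a multiple of $q$ vanishing on $\rrad V(\la)$, contradicting $q|_{\rrad V(\la)} \ne 0$: this is \eqref{N.caseN}.

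The genuine content, and the main obstacle, is the vanishing of the obstruction $\delta(b) \in \H^1(G, V(\la)^{*[1]})$ outside the single case (type $C$, $\la = \omega_1$); this is exactly what Proposition \ref{TV} supplies, via the Lyndon--Hochschild--Serre spectral sequence together with the Bendel--Nakano--Pillen computation of $\Ext^1_{G_1}(k,k)$, which is what isolates type $C$ and the weight $\omega_1$. Everything else is bookkeeping around the exact sequence. The one place demanding characteristic-$2$ care is the structure part: because $\varphi$ fails to be surjective from quadratic forms onto all invariant bilinear forms, $q$ need not vanish on $\rrad b_q$, and it is precisely this Frobenius-semilinear behaviour of $q$ on $\rrad V(\la)$ that can make $\rrad q$ one smaller than $\rrad V(\la)$ and kill $(\Quad L(\la))^G$.
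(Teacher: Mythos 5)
Your route is in substance the paper's own: the reduction via Section \ref{red.gen} and Lemma \ref{red.tensor}, Corollary \ref{sum} when two or more components of $\la$ are nonzero, Proposition \ref{TV} for the simple case (whose type-$C$ exception $[V(\la)^*:H^0(\omega_1)]\ne 0$ is exactly $\la=\omega_1$, i.e.\ Example \ref{Sp.bad}), and the injectivity of polarization from Corollary \ref{symvsym}\eqref{symvsym.inj} are precisely the tools the paper uses, so your trichotomy, its exclusivity, the identities $(\Quad V(\la))^G=kq$ and $\rrad b_q=\rrad V(\la)$, and the case analysis on whether $q$ vanishes on $\rrad V(\la)$ (including the deduction of $(\Quad L(\la))^G$ in each case) are all sound.

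There is, however, one genuine gap, in case \eqref{N.caseN}: you assert that because $q|_{\rrad V(\la)}$ is additive with $q(cv)=c^2q(v)$, its zero locus has codimension $0$ or $1$. That is true only over perfect fields, where $v\mapsto q(v)^{1/2}$ is $k$-linear and the kernel is that of a linear functional; the theorem is stated for arbitrary $k$ of characteristic $2$. Over $k=\FF_2(t)$, the Frobenius-semilinear form $q(x,y)=x^2+ty^2$ on $k^2$ has zero kernel, of codimension $2$, so semilinearity alone does not bound the codimension and your argument fails for non-perfect $k$. The missing idea --- and the actual content of this step in the paper --- is a rationality argument: since $G$ and $V(\la)$ are defined over $\FF_2$ and the map $\FF_2[V(\la,\FF_2)]^G\ot k\to k[V(\la,k)]^G$ is an isomorphism by \cite{Sesh:GR}, the form $q$ is the base change of a form $q_0$ over the \emph{perfect} field $\FF_2$; there the kernel $X_0$ of $q_0$ restricted to the radical has codimension at most one, and then $\rrad q$ is a proper subspace of $\rrad V(\la)$ containing $X_0\ot k$, which forces codimension exactly $1$ over $k$. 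This descent is also why the statement itself is true as written: the invariant quadratic form is automatically defined over the prime field, so the non-perfect pathology above cannot occur for it.
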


%
%

\begin{proof}
Suppose first that \eqref{orth.bad} and \eqref{orth.ne} are false; we prove \eqref{orth.orth}.
As in Section~\ref{red.gen}, without loss of generality we can assume that $G$ is of the form $T_0 \times \prod_{i=1}^n G_i$ where $T_0$ is a split torus and the $G_i$ are simple, split, simply connected algebraic groups over $k$, and write $\la = \sum_{i=0}^n \la_i$.  As $\la = -w_0 \la$ and $w_0$ is the product of the longest element in the Weyl groups for each of the $G_i$, it follows that $\la_i$ has the same property for all $i$ and in particular that $\la_0 = 0$.  If $\la_i \ne 0$, then $V(\la_i)$ has a nonzero alternating bilinear form by Lemma \ref{bil}.  Hence if two or more $\la_i$'s are nonzero, Corollary \ref{sum} and Lemma \ref{red.tensor} combine to give \eqref{orth.orth}.

So assume $\la = \la_1$.  As $T_0$ and $G_i$ for $i \ne 1$ act trivially on $V(\la)$, we may assume that $G = G_1$, i.e., that $G$ is simple.  By hypothesis, $(\Bil V(\la))^G \ne 0$, and Proposition \ref{TV} completes the proof of \eqref{orth.orth}.

\smallskip

Now suppose that \eqref{orth.orth} holds.
Because $\la \ne 0$, for $V = V(\la)$ or $L(\la)$, $V$ lacks a codimension-1 submodule, hence polarization gives an injection $(\Quad V)^G \hookrightarrow \Lambda^2(V^*)^G$ by Corollary \ref{symvsym}\eqref{symvsym.inj}, thus \eqref{orth.orth} implies not \eqref{orth.ne} completing the proof of the first claim,  and $\dim (\Quad V)^G \le 1$. 

Let $q \in (\Quad V(\la))^G$ be nonzero.  If $\rrad b_q = \rrad q$, then the remaining claims in \eqref{N.caseZ} are obvious, so 
suppose $q\vert_{\rrad V(\la)}$ is not identically zero.  Recall from Section~\ref{back.sec} that $G$ and $V(\la)$ are defined over $\FF_2$.  The natural map $\FF_2[V(\la, \FF_2)]^G \ot k \to k[V(\la, k)]^G$ is an isomorphism \cite{Sesh:GR}, so $q$ is the base change from $\FF_2$ of a $G$-invariant quadratic form $q_0$ on $V(\la, \FF_2)$.  We take $X_0$ to be the kernel of $q_0$ restricted to the radical of $V(\la, \FF_2)$; it has codimension at most one because $\FF_2$ is perfect.  Then the radical $X$ of $q$ is a proper subspace of $\rrad V(\la)$ and contains $X_0 \ot k$, hence $\dim(\rrad V(\la)/X) = 1$, and the claim is proved.
\end{proof}

\subsection{} In addition to Proposition \ref{Sp.odd} above, in some of the other cases where $V(\la, \C)$ is symplectic yet $V(\la, k)$ is orthogonal for $\car k = 2$, the $G$-invariant quadratic form has been described in the literature.  For the half-spin representations of type $D$, see \cite[p.~150]{KMRT}.  For the representation $\Lambda^r(L(\omega_{1}))\cong L(\omega_{r})$ of $\SL_{2r}$, see \cite[10.12]{KMRT}. And for the 56-dimensional representation $V(\omega_7)$ of $E_7$, one notes that the well-known $E_7$-invariant quartic on $V(\omega_7)$  is defined over $\Z$ and becomes a square after reduction mod 2 --- see \cite[p.~87]{Brown:E7} for an explicit formula.
 
\begin{example}
For $G = \SO_{2n}$, write $\omega_{1}$ for the highest weight of the tautological representation $V(\omega_{1})$.  As in Section \ref{killing.2}, $V(\at)$ is the Lie algebra of the simply connected cover $\Spin_{2n}$.  The weight $\at$ is a maximal weight of $\Lambda^2(V(\omega_{1}))$, and so there is a nonzero $G$-equivariant function $f \!: V(\at) \to \Lambda^2(V(\omega_{1}))$ unique up to a scalar multiple.  If $\car k \ne 2$, $f$ is an isomorphism, see e.g.~\cite[Lemma 45.3(2)]{KMRT}.  However, if $\car k = 2$, then the radical $\rrad V(\at)$ is the 2-dimensional center $\Lie(\mu_2 \times \mu_2)$ of the Lie algebra; as the natural $G$-invariant bilinear form on $\Lambda^2(V(\omega_{1}))$ given by \eqref{br.def} is nondegenerate, it follows that $\ker f = \rrad V(\at)$.
\end{example}

\subsection{Meaning of orthogonality for Weyl modules}
If $L(\la)$ is orthogonal, then the action of $G$ on $L(\la)$ is a homomorphism $G \to \SO(q)$ for some $G$-invariant quadratic form $q$, and $\SO(q)$ is a semisimple group of type $B$ or $D$.  We now describe the relevant group schemes in the case of a $G$-invariant quadratic form $q$ on the Weyl module $V(\la)$.

Suppose for the moment that $f$ is a polynomial function on a vector space $V$ and $U < V$ is a subspace such that $f(u + v) = f(v)$ for all $u \in U$ and $v \in V$.  Then $f$ induces canonically a polynomial function $\fb$ on $\Vb := V/U$, and we define $\O(\fb)$ to be the closed subgroup scheme of $\GL(\Vb)$ stabilizing $\fb$.  Similarly, set $\O(f,U)$ to be the sub-group-scheme of $\GL(V)$ leaving both $f$ and $U$ invariant.

\begin{lem}
In the notation of the preceding paragraph, there is a short exact sequence of group schemes
\[
\begin{CD}
1 @>>> \Hom(V/U, U) @>>> \O(f, U) @>>> \O(\fb) \times \GL(U) @>>> 1.
\end{CD}
\]
\end{lem}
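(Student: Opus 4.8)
The plan is to argue functorially: for an arbitrary $k$-algebra $R$ I would describe the $R$-points of each group scheme, produce the two maps, and verify exactness pointwise, reinforced by an explicit scheme-theoretic section (so that surjectivity holds in the group-scheme sense, not merely on points). The organizing observation is that, since $f(u+v)=f(v)$ for all $u\in U$, the value $f(v)$ depends only on the class $v+U\in\Vb$; hence an element $g\in\GL(V)(R)$ that stabilizes $U_R$ preserves $f$ if and only if the induced automorphism $\gbar$ of $\Vb_R$ preserves $\fb$. This decouples the factor $\GL(U)$ entirely from the condition of preserving $f$.

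First I would define $\pi\colon\O(f,U)\to\O(\fb)\times\GL(U)$ by $g\mapsto(\gbar,g|_U)$. Stabilizing $U$ makes both $g|_U\in\GL(U)$ and the induced map $\gbar\in\GL(\Vb)$ well defined, and the observation above shows $\gbar\in\O(\fb)$; that $\pi$ is a homomorphism of group schemes is then immediate.

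Next I would compute the kernel. An element lies in $\ker\pi(R)$ exactly when $g|_U=\Id_U$ and $\gbar=\Id_{\Vb}$, i.e. $g=\Id+\varphi$ with $\varphi\colon V\to U$ linear and $\varphi|_U=0$; such $\varphi$ are precisely the elements of $\Hom(V/U,U)(R)$. Conversely every such $g$ lies in $\O(f,U)$: it fixes $U$, it is invertible with inverse $\Id-\varphi$ (since $\varphi^2=0$, as $\varphi$ maps into $U$ and kills $U$), and it preserves $f$ because $f(v+\varphi(v))=f(v)$ by the translation-invariance hypothesis --- this is the unique place where the hypothesis on $f$ enters. Moreover $(\Id+\varphi)(\Id+\psi)=\Id+(\varphi+\psi)$ because $\varphi\psi=0$, so the group law is additive and $\ker\pi\cong\Hom(V/U,U)$ as group schemes.

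Finally, for surjectivity I would fix a $k$-linear complement $W$ to $U$, giving $V=U\oplus W$ and an isomorphism $p\colon W\iso\Vb$, and (after base change) define $s\colon\O(\fb)\times\GL(U)\to\O(f,U)$ by $(\overline{h},h_U)\mapsto h_U\oplus(p^{-1}\,\overline{h}\,p)$. This block-diagonal map lands in $\O(f,U)$ by the organizing observation, is a homomorphism of group schemes, and satisfies $\pi\circ s=\Id$; thus $\pi$ is (even split) surjective. The proof has no serious obstacle: the only point demanding care is to keep everything at the level of $R$-points, so that the conclusion is genuinely a sequence of group schemes and surjectivity is meaningful in that category, which the explicit section settles cleanly.
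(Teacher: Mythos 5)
Your proof is correct and is essentially the paper's argument: the paper likewise fixes a complement $U'$ of $U$ and, working with $S$-points for every $k$-algebra $S$, observes that in a basis adapted to $V = U \oplus U'$ the group $\O(f,U)(S)$ consists exactly of the block-triangular matrices with diagonal blocks in $\GL(U)(S)$ and $\O(\fb)(S)$ and off-diagonal block in $\Hom(U' \ot S, U \ot S)$, which packages your homomorphism, kernel computation, and block-diagonal section into a single display. Your write-up only unwinds that one-line block description into its constituent verifications.
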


\begin{proof}
Choosing any complement $U'$ of $U$ in $V$ and writing linear transformations of $V$ in terms of a basis adapted to the decomposition $V = U \oplus U'$, we find that for every $k$-algebra $S$, 
\[
\O(f, U)(S) = 
\begin{pmatrix}
\GL(U)(S) & \Hom(U' \ot S, U \ot S) \\
0 & \O(\fb)(S)
\end{pmatrix}.  \qedhere
\]
\end{proof}

Now suppose that $q$ is a quadratic form on $V$, and that $\dim (\rrad b_q / \rrad q) \le 1$.  This holds if $\car k \ne 2$, or if $k$ is perfect, or by Theorem \ref{orth} if $V$ is a Weyl module for a split reductive group $G$ and $q$ is $G$-invariant.  Then we take $U := \rrad q$; the hypothesis on the dimension assures us that for every extension $K$ of $k$, $\rrad (q \ot K) = (\rrad q) \ot K = U \ot K$, and we find that $\O(q) = \O(q, U)$ as group schemes.  Therefore, for $\qb$ the induced quadratic from on $V/U$, we have an exact sequence
\[
\begin{CD}
1 @>>> \Hom(V/U, U) @>>> \O(q) @>>> \O(\qb) \times \GL(U) @>>> 1.
\end{CD}
\]
We define $\SO(q)$ to be the fiber of $\SO(\qb) \times \SL(U)$ in $O(q)$.  Clearly, the action of $G$ on $V$ preserving $q$ gives homomorphisms $G \to \SO(q) \to \SO(\qb)$, where $\SO(\qb)$ is semisimple of type $B$ or $D$.

\section{Quadratic forms on  irreducible representations} \label{irred.sec}

\subsection{} For what $\la \in X(T)_+$ is $L(\la)$ orthogonal?  By Theorem \ref{orth}, $L(\la)$ of $G$ is orthogonal if and only if (a) the Weyl module $V(\la)$ is orthogonal and (b) a nonzero $G$-invariant quadratic form $q$ on $V(\la)$ vanishes on $\rrad V(\la)$.  As the case where $\car k \ne 2$ is treated by Lemma \ref{bil}, we assume here that $\car k = 2$, and we know the answer to (a) by Theorem \ref{orth}.  

For question (b), we have the following sufficient conditions.
Let $W_{p}=W\ltimes p{\mathbb Z}\Phi$ be the affine Weyl group, which acts on $X(T)$ via the dot action. For $w\in W_{p}$ and $\lambda\in X(T)$, the action is denoted by $w\cdot \lambda$. 
The following result provides sufficient conditions to guarantee that $L(\lambda)$ is orthogonal. 

\begin{prop} \label{N.crit}
Suppose $\car k = 2$, $\la \ne 0$, and $V(\la)$ is orthogonal.  Among the statements
\begin{enumerate}
\item \label{N.pos} $\lambda$ is not a sum of positive roots; 
\item \label{N.root} $\la\notin W_2\cdot 0$; 
\item  \label{N.coh} $\operatorname{H}^1(G, L(\la)) = 0$; 
\item \label{N.case} $L(\la)$ is orthogonal,
\end{enumerate}
the following implications hold: \eqref{N.pos} $\Rightarrow$ \eqref{N.root} $\Rightarrow$ \eqref{N.coh} $\Rightarrow$  \eqref{N.case}.
\end{prop}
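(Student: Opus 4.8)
The plan is to prove the three implications in turn, reserving the real work for the last one. Throughout I use that the hypothesis ``$V(\la)$ orthogonal'' forces $\la=-w_0\la$ (so $L(\la)$ is self-dual) and that $\la$ is dominant and nonzero.

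\emph{(a)~$\Rightarrow$~(b).} The first move is to recast both statements in terms of the root lattice $Q$. Since $\la$ is dominant and the fundamental weights are nonnegative rational combinations of the simple roots (the inverse Cartan matrix has nonnegative entries), $\la$ is a nonnegative \emph{rational} combination of simple roots; if in addition $\la\in Q$, these coefficients are forced to be nonnegative integers. Hence, for dominant $\la$, ``$\la$ is a sum of positive roots'' is equivalent to $\la\in Q$, so (a) says precisely $\la\notin Q$. On the other side, $W_2\cdot 0\subseteq Q$: writing an affine element as $w=t_{2\gamma}\s$ with $\s\in W$ and $\gamma\in Q$, one has $w\cdot 0=\s\rho-\rho+2\gamma$, where $\s\rho-\rho\in Q$ (a difference of $W$-conjugate weights lies in $Q$) and $2\gamma\in Q$. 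Thus $\la\notin Q$ immediately gives $\la\notin W_2\cdot 0$, which is (b).

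\emph{(b)~$\Rightarrow$~(c).} This is the linkage principle. As $k=L(0)$, we have $\H^1(G,L(\la))=\Ext^1_G(L(0),L(\la))$, which vanishes unless $0$ and $\la$ are linked, i.e.\ unless $\la\in W_2\cdot 0$, by \cite[II.6.17]{Jantzen}. So (b) yields (c).

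\emph{(c)~$\Rightarrow$~(d).} Here I would run through the dichotomy provided by Theorem~\ref{orth}: since $V(\la)$ is orthogonal, $(\Quad V(\la))^G=kq$ with $\rrad b_q=\rrad V(\la)=:U$, and either case~\eqref{N.caseZ} holds, in which case $L(\la)$ is already orthogonal and we are done, or case~\eqref{N.caseN} holds. I claim case~\eqref{N.caseN} contradicts (c). In that case $\rrad q$ is a $G$-submodule of $U$ of codimension $1$, and $q|_U$ is a nonzero totally singular (because $U=\rrad b_q$) $G$-invariant quadratic form with radical $\rrad q$. The induced nonzero totally singular $G$-invariant form on the one-dimensional module $U/\rrad q$ forces the character by which $G$ acts there to square to $1$; as $G$ is connected reductive its character group is torsion-free, so $U/\rrad q\cong k$ is the trivial module. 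Thus $k$ is a quotient of $U=\rrad V(\la)$, and pushing out $0\to U\to V(\la)\to L(\la)\to 0$ along $U\twoheadrightarrow k$ produces the extension $0\to k\to E\to L(\la)\to 0$ with $E=V(\la)/\rrad q$. This extension is non-split: since $\rrad q\subseteq \rrad V(\la)$, the head of $E$ is still the simple module $L(\la)$, whereas a split extension would have the nontrivial summand $k\ne L(\la)$ in its head (using $\la\ne 0$). Hence $\Ext^1_G(L(\la),k)\ne 0$, and by self-duality of $L(\la)$ this equals $\Ext^1_G(k,L(\la))=\H^1(G,L(\la))$, contradicting (c). Therefore case~\eqref{N.caseZ} must hold, i.e.\ $L(\la)$ is orthogonal.

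The main obstacle is this last implication, where the content lies in extracting from the ``bad'' case~\eqref{N.caseN} of Theorem~\ref{orth} a concrete nonsplit self-extension witnessing $\H^1(G,L(\la))\ne 0$. The two delicate points are the identification of $U/\rrad q$ as the trivial module (the character-squaring argument, which works precisely because $\car k=2$ and $X(G)$ is torsion-free) and the verification of non-splitness via the head of the pushout; everything else is the short linkage and inverse-Cartan bookkeeping of the first two implications.
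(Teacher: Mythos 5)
Your proof is correct, and for the first two implications it essentially coincides with the paper's: the paper likewise observes that $\la \in W_2\cdot 0$ forces $\la \in Q$, hence (by dominance) $\la$ is a sum of positive roots, and it derives \eqref{N.root} $\Rightarrow$ \eqref{N.coh} from the linkage principle (you cite linkage for $\Ext^1_G(L(0),L(\la))$ directly; the paper routes through the observation that $\H^1(G,L(\la))\ne 0$ would make $L(0)$ a composition factor of $V(\la)$ --- same content). Where you genuinely diverge is \eqref{N.coh} $\Rightarrow$ \eqref{N.case}. The paper disposes of this in one line via the identification $\H^1(G,L(\la)) = \Ext^1_G(k,L(\la)) = \Hom_G(\rrad V(\la),k)$ from \cite[II.2.14, II.2.12(4)]{Jantzen}, so that case \eqref{N.caseN} of Theorem \ref{orth} --- a codimension-one $G$-submodule $\rrad q \subset \rrad V(\la)$ --- immediately produces a nonzero element of $\Hom_G(\rrad V(\la),k)$. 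You instead rebuild the needed instance of that identification by hand: pushing out $0 \to \rrad V(\la) \to V(\la) \to L(\la) \to 0$ along $\rrad V(\la) \onto \rrad V(\la)/\rrad q \cong k$ gives $E = V(\la)/\rrad q$, which is a nonsplit extension of $L(\la)$ by $k$ because $\rrad(V/W)=\rrad(V)/W$ for $W \subseteq \rrad V$ shows $E$ still has simple head $L(\la)$; hence $\Ext^1_G(L(\la),k)\ne 0$, and self-duality of $L(\la)$ (valid since orthogonality of $V(\la)$ forces $\la=-w_0\la$) converts this to $\H^1(G,L(\la))\ne 0$. This is more self-contained at the cost of length, and it has one genuine merit over the paper's text: your character-squaring argument --- that $q$ induces a nonzero totally singular form on the one-dimensional quotient $\rrad V(\la)/\rrad q$, forcing the character $\chi$ through which $G$ acts to satisfy $\chi^2=1$ and hence $\chi=1$ because $X(G)$ is torsion-free for connected $G$ --- makes explicit a step the paper leaves tacit, namely why the codimension-one quotient is the \emph{trivial} module rather than a nontrivial character of the reductive group (the paper simply asserts $\Hom_G(\rrad V(\la),k)\ne 0$). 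Both routes prove the same proposition; the paper's is shorter by outsourcing the homological identification to Jantzen, while yours verifies exactly the special case needed.
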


\begin{proof} 
Using \cite[II.2.14, II.2.12(4)]{Jantzen} we have
\[
\operatorname{H}^1(G, L(\la)) = \Ext^1_G(k, L(\la)) = \Hom_G(\rrad V(\la), k).
\]
If \eqref{N.case} fails, then by Theorem \ref{orth}\eqref{N.caseN} $\Hom_G(\rrad V(\la), k) \ne 0$, i.e., \eqref{N.coh} fails.  If \eqref{N.coh} fails, then $L(0)$ is a factor in the composition series for $V(\la)$, hence $\la\in W_2\cdot 0$ and \eqref{N.root} fails.  If $\la$ is in $W_2 \cdot 0$, then $\la$ is in the root lattice; since $\la$ is a dominant weight, it is a sum of positive roots, so \eqref{N.pos} implies \eqref{N.root}.
\end{proof}

It is not hard to find $\la \in Q$ but $\la \not\in W_p \cdot 0$, i.e., an example to show that \eqref{N.root} $\not\Rightarrow$ \eqref{N.pos}.  The examples in Section \ref{adj.eg} shows that the converses of each of the other implications can fail. Also, note that condition \eqref{N.root} in Proposition~\ref{N.crit} can be 
replaced by the statement that ``$0$ is not strongly linked to $\lambda$'', for strong linkage as described in \cite[II Chapter 6]{Jantzen}.

\begin{example}
Let $\omega$ be a dominant weight of $\Sp_{2n}$ that is neither a sum of positive roots nor $\omega_1$.  For every field of characteristic 2, the Weyl module $V(\omega)$ of $\Sp_{2n}$ is orthogonal by Theorem \ref{orth}, and Proposition \ref{N.crit} says that the irreducible representation $L(\omega)$ is orthogonal.
\end{example}

\subsection{Adjoint representations}  \label{adj.eg} Suppose now that $G$ is a split simple group over a field $k$ of characteristic 2. The highest weight of the adjoint representation is $\at$.  We illustrate the proposition by determining whether the 
irreducible representation $L(\at)$ is orthogonal, i.e., whether the reduced Killing form $s$ defined in Example \ref{killing.2} vanishes on $\rrad V(\at)$.  According to the description of $s$ in that example, it suffices to first find the radical $U$ of the polar bilinear form of $q^\vee \ot k$, which is the kernel of the linear transformation $DC$, and then to check if $q^\vee \ot k$ as given by \eqref{q.def} is identically zero on $U$.  These steps involve linear algebra with explicit matrices over $\FF_2$ and explicit formulas can be found in \cite{BabicCh}, so we merely summarize the results in Table \ref{adjoint.eg}.  
\begin{table}[hbt]
\begin{tabular}{cc|cc} 
$\Phi$&restrictions&$L(\at)$ orthogonal? &$\dim \operatorname{H}^1(G, L(\at))$ \\ \hline
&$n \equiv 0, 2 \bmod 4$&yes&0 \\
$A_n$&$n \equiv 1 \bmod 4$&no&1 \\
&$n \equiv 3 \bmod 4$&yes&1 \\ \hline

&$n \equiv 0 \bmod 4$&yes&1 \\
$B_n$ ($n \ge 3$)&$n \equiv 1, 3 \bmod 4$&yes&0 \\
&$n \equiv 2 \bmod 4$&no&1 \\ \hline

&$n \equiv 0 \bmod 4$&yes &2 \\ 
$D_n$ ($n \ge 4$)&$n \equiv 1, 3 \bmod 4$&yes&1 \\
&$n \equiv 2 \bmod 4$&no &2 \\ \hline
$C_n$ or $E_7$&&no&1 \\ 
$E_6$, $E_8$, $F_4$, or $G_2$&&yes&0 
\end{tabular}
\vskip .5cm 
\caption{Orthogonality of $L(\at)$ for a simple group $G$ over a field of characteristic 2} \label{adjoint.eg}
\end{table}

For types other than $C$, the orthogonality of $L(\at)$ has been determined in \cite[\S3]{GowWillems:methods} by a somewhat different argument. Although for type $C$, we find that $L(\at)$ is never orthogonal, which appears to contradict the final sentence of \cite{GowWillems:methods}: ``Our methods can also be applied to the Lie algebra of type $C_l$, but we omit a statement of our findings as they correspond to those obtained for $B_l$.''

In the table, for the convenience of the reader, we also give $\dim \H^1(G, L(\at))$.  This amounts to identifying the cohomology group with $\Hom_G(\rrad V(\at), k)$ and consulting the description of the $G$-module structure of 
$V(\at, k)$ given in \cite{Hiss}.  Note that the case of $A_n$ with $n \equiv 0 \bmod{4}$ shows that \eqref{N.coh}  $\not\Rightarrow$ \eqref{N.root} in Proposition \ref{N.crit} and with $n \equiv 3 \bmod{4}$ shows that \eqref{N.case} $\not\Rightarrow$ \eqref{N.coh}.

\section{Cohomology of $\Lambda^2$ of Irreducible and Weyl modules} \label{wedge2.coh}

\subsection{} We begin with a lemma which gives conditions on $V$ that guarantee
$\text{H}^1(G, V \ot V) = 0$.  Recall that $G$ is assumed to be split reductive.

\begin{lemma} \label{tensorlemma}
Let $V$ be a module admitting a good filtration or $V=L(\lambda)$ with $\la \in X(T)_+$. Then $\operatorname{H}^1(G, V \ot V) = 0$.
\end{lemma}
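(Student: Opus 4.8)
The plan is to treat the two cases of the hypothesis separately: the good-filtration case is essentially formal, while the case $V = L(\la)$ carries the only real content.

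First, suppose $V$ admits a good filtration. Then $V \otimes V$ also admits a good filtration, since a tensor product of two modules with good filtrations has a good filtration (\cite[II.4.21]{Jantzen}). I claim that $\operatorname{H}^1(G, N) = 0$ for \emph{every} module $N$ with a good filtration: writing $\operatorname{H}^1(G, N) = \Ext^1_G(k, N) = \Ext^1_G(V(0), N)$ and using that $\Ext^1_G(V(\mu), H^0(\eta)) = 0$ for all $\mu, \eta$ by \cite[II.4.13]{Jantzen}, a dévissage along a good filtration of $N$ gives $\Ext^1_G(V(0), N) = 0$. Applying this to $N = V \otimes V$ disposes of this case.

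Now suppose $V = L(\la)$. Here I would first convert the cohomology into an $\Ext$ between simple modules: using the $G$-isomorphism $L(\la) \otimes L(\la) \cong \Hom_k(L(\la)^*, L(\la))$ together with $L(\la)^* = L(-w_0\la)$, one obtains
\[
\operatorname{H}^1(G, L(\la) \otimes L(\la)) = \Ext^1_G(k, L(\la) \otimes L(\la)) = \Ext^1_G(L(-w_0\la), L(\la)).
\]
The key input is the standard fact that a nonzero $\Ext^1_G(L(\mu), L(\la))$ forces $\mu \neq \la$ and forces $\mu$ and $\la$ to be comparable in the dominance order; this is immediate from strong linkage (\cite[II, Ch.~6]{Jantzen}), and can also be seen directly by applying $\Hom_G(L(\mu), -)$ to $0 \to L(\la) \to H^0(\la) \to H^0(\la)/L(\la) \to 0$ and $\Hom_G(-, H^0(\la))$ to $0 \to \rrad V(\mu) \to V(\mu) \to L(\mu) \to 0$, using the vanishing $\Ext^1_G(V(\mu), H^0(\la)) = 0$ from \cite[II.4.13]{Jantzen}. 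Taking $\mu = -w_0\la$, a nonzero group would make $-w_0\la$ and $\la$ comparable, whence $-w_0\la = \la$ by Lemma \ref{comparable}, contradicting $\mu \neq \la$. Hence $\Ext^1_G(L(-w_0\la), L(\la)) = 0$, which is the assertion.

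The main obstacle is precisely this comparability constraint on $\Ext^1$ between two simples; everything else is bookkeeping. Once it is in hand, Lemma \ref{comparable} is exactly the tool that converts the self-duality condition $\mu = -w_0\la$ into an equality $\mu = \la$, making the whole computation collapse. If one wishes to avoid any subtlety about the center or the torus factor, one may first reduce to $G$ simple and simply connected as in Section \ref{red.gen}, though the cited cohomological facts hold for split reductive $G$ directly.
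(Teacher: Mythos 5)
Your proof is correct and takes essentially the same route as the paper: the good-filtration case via the vanishing $\Ext^1_G(V(\mu), H^0(\eta)) = 0$ of \cite[II.4.13]{Jantzen}, and the simple case by rewriting $\H^1(G, L(\la) \ot L(\la))$ as $\Ext^1_G(L(-w_0\la), L(\la))$, invoking the comparability constraint on weights for a nonzero $\Ext^1$ between simples (the paper cites \cite[II.2.14 Remark]{Jantzen}), and concluding with Lemma \ref{comparable}. The only cosmetic differences are your d\'evissage phrasing in the first case, where the paper directly cites $\H^1(G, H^0(\sigma_1) \ot H^0(\sigma_2)) = 0$, and your optional direct verification of the comparability fact.
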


\begin{proof} If $V$ has a good filtration then $V\otimes V$ has a good filtration and the result follows, because $\H^1(G, H^0(\sigma_1) \ot H^0(\sigma_2)) = 0$ for all $\s_1, \s_2 \in X(T)_+$ by \cite[II 4.13 Proposition]{Jantzen}.

Now suppose that 
$V=L(\lambda)$, and suppose that $\H^1(G, V \ot V)$, i.e., $\Ext^{1}_{G}(L(-w_{0}\lambda),L(\lambda))$, is not zero. 
According to \cite[II 2.14 Remark]{Jantzen}, one has either $-w_{0}\lambda < \lambda$ or $\lambda < -w_{0}\lambda$, contradicting Lemma \ref{comparable}. 
\end{proof}

\subsection{} We now calculate $\H^1(G, \Lambda^2(V))$ for some of the modules $V$ we have studied.
\begin{theorem} \label{wedge2}
Let $G$ be a split reductive group and $\la \in X(T)_+$.  Then
\[
\operatorname{H}^1(G, \Lambda^2(V)) = \begin{cases}
0 & \text{if $\car k \ne 2$ and $V = T(\la)$, $L(\la)$, or $H^0(\la)$;} \\
(\Quad V^*)^G & \text{if $\car k = 2$, $V = L(\la)$ or $H^0(\la)$, and $\la \ne 0$.} 
\end{cases}
\]
\end{theorem}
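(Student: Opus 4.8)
The plan is to compute $\H^1(G, \Lambda^2(V))$ by exploiting the two short exact sequences \eqref{bil.seq} and \eqref{wedgeS.seq}, the vanishing lemma just proved (Lemma \ref{tensorlemma}), and the relationship between symmetric powers and symmetric tensors from Proposition \ref{symvsym.prop}. First I would dispose of the characteristic $\ne 2$ case: there, \eqref{wedgeS.seq} splits, so $\Lambda^2(V)$ is a direct summand of $V \ot V$, and since $V = T(\la)$, $L(\la)$, or $H^0(\la)$ each either has a good filtration or is irreducible, Lemma \ref{tensorlemma} gives $\H^1(G, V\ot V) = 0$, whence $\H^1(G, \Lambda^2 V) = 0$ as a direct summand.

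For the characteristic $2$ case with $V = L(\la)$ or $H^0(\la)$ and $\la \ne 0$, I would take the long exact sequence in $G$-cohomology associated to \eqref{wedgeS.seq} (or equivalently \eqref{bil.seq}, which in characteristic $2$ reads as $0 \to S'_2(V) \to V\ot V \to \Lambda^2(V) \to 0$). The key input is that $\H^1(G, V\ot V) = 0$ by Lemma \ref{tensorlemma}, which applies since $L(\la)$ is irreducible and $H^0(\la)$ has a good filtration. This vanishing collapses the long exact sequence so that $\H^1(G, \Lambda^2 V)$ is the cokernel of the map $\H^0(G, V \ot V) \to \H^0(G, \Lambda^2 V)$ (or a comparable connecting-map description), and I would then identify that cokernel with $(\Quad V^*)^G$. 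The bridge to quadratic forms comes from the dualized version of \eqref{symvsym.seq} at $p = 2$, namely $0 \to V^{*[1]} \to S^2(V^*) \xrightarrow{\varphi} S''_2(V^*) \to 0$, together with Corollary \ref{symvsym.char} identifying $S''_2(V^*)$ with the characteristic (alternating) bilinear forms, i.e. $\Lambda^2(V^*)^G$. Dualizing throughout, $\Lambda^2(V)^* \cong \Lambda^2(V^*)$ and $(\Quad V^*)^G = S^2(V)^G$, so I would be careful to track the dualities and match up $S^2(V^*)$ versus $S^2(V)$ correctly.

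The cleanest route is probably to work with $V^*$ directly: apply $\H^*(G, -)$ to $0 \to \Lambda^2(V^*) \to V^* \ot V^* \to S^2(V^*) \to 0$, use that $V^*\ot V^*$ also has vanishing $\H^1$ (again Lemma \ref{tensorlemma}, since $L(\la)^* = L(-w_0\la)$ and $H^0(\la)^* = V(-w_0\la)$ which has a Weyl filtration — here I would need to confirm the lemma covers Weyl modules, but by self-duality of the cohomology of $V\ot V$ it suffices that $V\ot V$ has the right form), and read off $\H^1(G, \Lambda^2(V^*))$ as the cokernel of $(V^*\ot V^*)^G \to (S^2(V^*))^G = (\Quad V)^G$. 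Replacing $V$ by $V^*$ in the final identification gives exactly $(\Quad V^*)^G$, and the hypothesis $\la \ne 0$ is used (via Corollary \ref{symvsym}\eqref{symvsym.inj}) to guarantee that $\varphi$ on invariants is injective, so that the relevant map $(V^*\ot V^*)^G \to (\Quad V)^G$ lands precisely with the expected image and the cokernel is all of $(\Quad V^*)^G$.

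The main obstacle I expect is bookkeeping: getting the dualities, the direction of the polarization map $\varphi$ versus $\psi$, and the placement of $V$ versus $V^*$ to line up so that the connecting homomorphism's cokernel is literally $(\Quad V^*)^G$ rather than $(\Quad V)^G$ or a quotient of it. In particular I must verify that the map out of $\H^0$ on $V\ot V$ has image exactly the \emph{polarized} (alternating) forms, so that what survives to $\H^1$ is exactly the quadratic forms not arising by polarization — and then reconcile this with the clean statement $(\Quad V^*)^G$ using the exact sequence \eqref{symvsym.seq} and the condition $\la \ne 0$. This is the step where Corollary \ref{symvsym} and the identification $\Lambda^2(V^*)^G = S''_2(V^*)^G$ do the real work, and where a sign or dualization slip would produce the wrong answer.
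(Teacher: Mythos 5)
Your skeleton is the paper's: apply $G$-cohomology to \eqref{wedgeS.seq} with $\Lambda^2(V)$ as the \emph{sub}object (note that your first formulation, via \eqref{bil.seq} with $\Lambda^2(V)$ as quotient, would give a cokernel mapping into $\H^1(G, S_2'(V))$, not what you want), kill $\H^1(G, V\ot V)$ by Lemma \ref{tensorlemma}, and handle $\car k \ne 2$ by the splitting. All of that is fine and matches the paper. The genuine gap is in the decisive characteristic-$2$ step: you must show the image of $\rho^G\colon (V\ot V)^G \to S^2(V)^G = (\Quad V^*)^G$ is \emph{zero}, and injectivity of $\varphi$ on invariants (Corollary \ref{symvsym}\eqref{symvsym.inj}) does not do this by itself. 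For $b \in (V\ot V)^G = (\Bil V^*)^G$, the polar form of $\rho(b)$ is $\varphi\rho(b) = s(b) = b + \sigma b$, which vanishes only when $b$ is \emph{symmetric}; for a non-symmetric invariant $b$, $\rho(b)$ would have nonzero polarization and injectivity of $\varphi$ gives no contradiction. The missing input is the one-dimensionality of the invariant bilinear forms: since $V^* = L(-w_0\la)$ or the Weyl module $V(-w_0\la)$, Lemma \ref{bil} and the discussion opening Section \ref{bil.weyl} give $(\Bil V^*)^G = kb$ with $b$ in fact alternating when $\la \ne 0$ (symmetry from Lemma \ref{char2.sym}, then $\rho(b) = \psi(b)$ is invariant and totally singular, and only at this point does Corollary \ref{symvsym}\eqref{symvsym.inj} --- legitimate because $\la \ne 0$ means $V^*$ has no codimension-$1$ submodule --- force $\psi(b) = 0$). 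This is exactly the paper's terse step ``Lemma \ref{bil} implies that $\rho^G$ is the zero map''; without the $1$-dimensionality/symmetry input your argument only bounds the cokernel, it does not identify it with all of $(\Quad V^*)^G$.

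A second, self-inflicted problem: your ``cleanest route'' through $V^*$ needs $\H^1(G, V(-w_0\la) \ot V(-w_0\la)) = 0$, which Lemma \ref{tensorlemma} does not cover, and your fallback --- ``self-duality of the cohomology of $V \ot V$'' --- is false in general: $\H^1(G, M^*) \cong \Ext^1_G(M, k)$ need not agree with $\H^1(G, M) = \Ext^1_G(k, M)$, and \cite[II 4.13 Proposition]{Jantzen} kills $\Ext^{>0}_G(V(\sigma_1), H^0(\sigma_2))$, not $\Ext$'s in the opposite order. The fix is simply to drop the dualization and run the long exact sequence on $V$ itself, as in your opening paragraph and as the paper does: then $S^2(V)^G = (\Quad V^*)^G$ appears with no bookkeeping, and Lemma \ref{tensorlemma} applies verbatim to $V = L(\la)$ or $H^0(\la)$.
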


Evidently, if $\la = 0$, then for $V = L(\la) = H^0(\la) = k$, we have $(\Quad V)^G = k$ but $\Lambda^2(V) = 0$, so $\H^1(G, \Lambda^2(V)) = 0$.

\begin{proof} 
Taking $G$-fixed points in \eqref{wedgeS.seq} gives an exact sequence
\[
\begin{CD}
\Lambda^2(V)^G @>\alt^G>> \Bil(V^*)^G @>\rho^G>> \Quad(V^*)^G @>>> \text{H}^1(G, \Lambda^2(V)) @>>> \H^1(G, V \ot V),
\end{CD}
\]
where 
the last term is zero for $V = H^0(\la)$, $T(\la)$, or $L(\la)$ by Lemma \ref{tensorlemma}.  If $\car k \ne 2$, then $\rho^G$ is surjective and the claims follow.

So suppose $\car k = 2$ and $V = H^0(\la)$ or $L(\la)$ with $\la \ne 0$.  If $(\Quad V^*)^G = 0$, we are done, so assume otherwise.  Then Lemma \ref{bil} implies that $\rho^G$ is the zero map, and again we are done.
\end{proof}

\begin{eg} \label{quadstar}
Suppose $V = L(\la)$ for some $\la \in X(T)_+$; we claim that $(\Quad V^*)^G \cong (\Quad V)^G$.  Indeed,
if $\la = -w_0 \la$, then $V \cong V^*$ and the claim is trivial.  On the other hand, if $\la \ne -w_0 \la$, then $(\Bil V)^G = 0$, hence $(\Quad V)^G$ consists of totally singular quadratic forms, which are necessarily zero (by Theorem ~\ref{orth} because $\la \ne 0$); the same applies to $V^* = L(-w_0 \la)$.
\end{eg}

Sin and Willems showed \cite[Proposition 2.7]{SinWillems}, in the case where $\car k = 2$ and $V = L(\la)$, that $\H^1(G, \Lambda^2 (V)) = 0$ implies that $(\Quad V)^G = 0$.  We are able to say more in Theorem \ref{wedge2} thanks to Lemma~\ref{tensorlemma}. 

\subsection{} By combining Theorem \ref{wedge2} with Theorem \ref{orth}, we obtain the following first cohomology calculation for all induced modules. 

\begin{cor}
Let $G$ be a split reductive group and $\la \in X(T)_+$.  Then
\[
\operatorname{H}^1(G, \Lambda^2(H^0(\la))) = \begin{cases}
k & \text{if $\car k = 2$ and $0 \ne \la = -w_0 \la$,  but} \\
    & \text{$G$ and $V(\la)$ are not as in Example \ref{Sp.bad};}\\
0 & \text{otherwise.}
\end{cases}
\]
\end{cor}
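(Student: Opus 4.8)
The plan is to combine Theorem~\ref{wedge2}, which expresses $\H^1(G, \Lambda^2(H^0(\la)))$ as a space of invariant quadratic forms, with Theorem~\ref{orth}, which decides exactly when a Weyl module carries a nonzero invariant quadratic form. First I would dispose of the degenerate cases. If $\car k \ne 2$, Theorem~\ref{wedge2} gives $\H^1(G, \Lambda^2(H^0(\la))) = 0$ immediately. If $\car k = 2$ but $\la = 0$, then $H^0(\la) = k$ and $\Lambda^2(k) = 0$, so the cohomology vanishes, as already noted after Theorem~\ref{wedge2}. This isolates the main case $\car k = 2$ and $\la \ne 0$, where Theorem~\ref{wedge2} together with the identity $H^0(\la)^* = V(-w_0\la)$ yields
\[
\H^1(G, \Lambda^2(H^0(\la))) = (\Quad H^0(\la)^*)^G = (\Quad V(-w_0\la))^G.
\]

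Next I would apply Theorem~\ref{orth} to the nonzero dominant weight $\mu := -w_0\la$. Since $\car k = 2$ and $\mu \ne 0$, exactly one of the three alternatives of that theorem holds, and $(\Quad V(\mu))^G$ is nonzero precisely when $V(\mu)$ is orthogonal; in that case it is $1$-dimensional, so it equals $k$. Orthogonality of $V(\mu)$ means that neither the Example~\ref{Sp.bad} alternative nor the alternative $\mu \ne -w_0\mu$ holds. The condition $\mu \ne -w_0\mu$ is equivalent to $\la \ne -w_0\la$, so whenever $\la \ne -w_0\la$ we obtain $(\Quad V(\mu))^G = 0$, hence vanishing cohomology, landing in the ``otherwise'' branch of the claimed formula.

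The key simplification is that when $\la = -w_0\la$ one has $\mu = -w_0\la = \la$, so $V(\mu) = V(\la)$; consequently the remaining alternative ``$G$ and $V(\mu)$ are as in Example~\ref{Sp.bad}'' coincides verbatim with ``$G$ and $V(\la)$ are as in Example~\ref{Sp.bad}''. Thus in the case $0 \ne \la = -w_0\la$ the cohomology equals $k$ when $G, V(\la)$ are not as in Example~\ref{Sp.bad} and equals $0$ when they are, which is exactly the two-line first branch of the formula. I do not anticipate a genuine obstacle: the only point requiring care is tracking the dualization $\la \mapsto -w_0\la$ through Theorem~\ref{wedge2} and observing that it acts as the identity precisely on the self-dual weights where the nonzero answer arises, so that the Example~\ref{Sp.bad} exception transfers between $\la$ and $-w_0\la$ without change.
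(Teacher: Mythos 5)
Your proof is correct and takes essentially the same approach as the paper, which gives no separate argument but states the corollary as an immediate combination of Theorem~\ref{wedge2} and Theorem~\ref{orth}. Your handling of the dualization $H^0(\la)^* = V(-w_0\la)$ --- in particular the observation that $\mu := -w_0\la$ coincides with $\la$ exactly in the self-dual case where the nonzero answer and the Example~\ref{Sp.bad} exception arise --- correctly supplies the only detail the paper leaves implicit.
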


\bibliographystyle{amsalpha}
\bibliography{skip_master}

\end{document}